
\documentclass[12pt]{article}
\usepackage{algorithm}
\usepackage{algorithmic}

\usepackage[utf8]{inputenc}

\usepackage{graphicx}
\usepackage{bm}
\usepackage{comment}
\usepackage{psfrag}
\usepackage{latexsym,amsmath,amsfonts,amscd,amsthm}
\usepackage{changebar}
\usepackage{color}
\usepackage{bm}
\usepackage{tikz}
\usepackage{multirow}
\usepackage{xcolor}
\usepackage{hyperref}
\usepackage{setspace}
\usepackage{amssymb}
\usepackage{mathrsfs}
\usepackage{caption}
\usepackage{subcaption}

\usepackage[title]{appendix}
\usepackage{ulem}
\usepackage{tikz}
\usetikzlibrary{arrows,backgrounds}

\setlength{\topmargin}{0 cm} \setlength{\oddsidemargin}{0 cm}
\setlength{\textwidth}{16.8cm} \setlength{\textheight}{21.6cm} 
\newlength{\spse}
\setlength{\spse}{5mm}

\newtheorem{thm}{Theorem}[section]

\newtheorem{defn}[thm]{Definition}
\newtheorem{rem}[thm]{Remark}

\newcommand{\Dt}{\Delta t}
\newcommand{\vareps}{\varepsilon}

\newcommand{\bx}{\mathbf{x}}
\newcommand{\bv}{\boldsymbol{v}}
\newcommand{\bnu}{\boldsymbol{\mu}}

\newcommand{\lgl}{\langle}
\newcommand{\rgl}{\rangle}
\newcommand{\half}{\frac{1}{2}}
\newcommand{\brho}{\boldsymbol{\rho}}
\newcommand{\bg}{\boldsymbol{g}}
\newcommand{\bc}{\boldsymbol{c}}
\newcommand{\bG}{\boldsymbol{G}}
\newcommand{\bff}{\boldsymbol{f}}
\newcommand{\bbeta}{\boldsymbol{\beta}}

\newcommand{\mcE}{\mathcal{E}}
\newcommand{\mcR}{\mathcal{R}}
\newcommand{\mcV}{\mathcal{V}}
\newcommand{\mcT}{\mathcal{T}}


\title{A micro-macro decomposed reduced basis method for the time-dependent radiative transfer equation}

\author{
Zhichao Peng\thanks{Department of Mathematics, Michigan State University, East Lansing, MI 48824 U.S.A. Email: {\tt pengzhic@msu.edu}.}
\and
Yanlai Chen\thanks{Department of Mathematics, University of Massachusetts Dartmouth, 285 Old Westport Road, North Dartmouth, MA 02747, USA. Email: {\tt{yanlai.chen@umassd.edu}}. Research is partially supported by National Science Foundation grant DMS-2208277, and by the UMass Dartmouth Marine and UnderSea Technology (MUST) Research Program made possible via an Office of Naval Research grant N00014-20-1-2849.}
\and
Yingda Cheng \thanks{Department of Mathematics, Department of  Computational Mathematics, Science and Engineering, Michigan State University,
East Lansing, MI 48824 U.S.A.
Email: {\tt ycheng@msu.edu}. Research is supported by NSF grants    DMS-2011838 and AST-2008004.}
\and
Fengyan Li\thanks{Department of Mathematical Sciences, Rensselaer Polytechnic Institute, Troy, NY 12180, U.S.A. Email: 
{\tt lif@rpi.edu}. Research is supported by NSF grant DMS-1913072.}}

\date{}

\usetikzlibrary{shapes.geometric, arrows}
\usetikzlibrary[calc]
\usetikzlibrary{positioning}
\usetikzlibrary{fit}
\usetikzlibrary{arrows.meta}
\tikzset{block_device_small/.style={draw, thick, text width=1.5cm, minimum height=1.0cm, align=left,fill=cyan},   
}

\tikzset{block_device/.style={draw, thick, text width=4.5cm, minimum height=1.5cm, fill={cyan},  align=center}}
\tikzset{block_laptop/.style={draw, thick, text width=4cm, minimum height=1.5cm, fill={pink},  align=center}}
\tikzstyle{arrow} = [thick,-Stealth]
\tikzstyle{line} = [thick,-]
\tikzstyle{square-node} = [rectangle, rounded corners, minimum width=3cm, minimum height=1cm,text centered, draw=black, , fill=red!30]

\tikzset{
    *|/.style={
        to path={
            (perpendicular cs: horizontal line through={(\tikztostart)},
                                 vertical line through={(\tikztotarget)})
            -- (\tikztotarget) \tikztonodes
        }
    }
}

\usepackage{sansmath}


\begin{document}

\maketitle
\abstract{
Kinetic transport equations are notoriously difficult to simulate  because of their complex multiscale behaviors and the   need to numerically resolve a high dimensional probability density function. Past literature has focused on building reduced order models (ROM) by analytical methods. In recent years, there is a surge of interest in developing ROM using data-driven or computational tools that offer more applicability and flexibility. This paper is a work towards that direction.

 Motivated by our previous work of designing ROM for the stationary radiative transfer equation in \cite{peng2022reduced} by leveraging  the low-rank structure of the solution manifold induced by the angular variable,  we here further advance the methodology to the time-dependent model. 
Particularly, we take the celebrated reduced basis method (RBM) approach and  propose a novel micro-macro decomposed reduced basis method (MMD-RBM). The MMD-RBM is constructed by exploiting, in a  greedy fashion, the low-rank structures of both the micro- and macro-solution manifolds with respect to   the angular and temporal variables. Our reduced order surrogate consists of: reduced bases for reduced order subspaces and a   reduced quadrature rule in the angular space.
The proposed MMD-RBM features several structure-preserving components: 1)  an equilibrium-respecting strategy to construct reduced order subspaces which better utilize
the structure of the decomposed system, and  2) a recipe for preserving positivity of the quadrature weights thus to maintain the stability of   the underlying reduced solver. 
The resulting ROM can be used to achieve a fast online solve for the angular flux 
in angular directions outside the training set and  for arbitrary order moment of the angular flux.

We perform benchmark test problems in 2D2V, and the numerical tests show that the MMD-RBM can capture the low rank structure effectively when it exists. A careful study in the computational cost shows that the offline stage of the  MMD-RBM is more efficient than the proper orthogonal decomposition (POD) method, and in the low rank case, it even outperforms a standard full order solve. Therefore, the proposed MMD-RBM can be seen  both as a surrogate builder and a low-rank solver at the same time. Furthermore, it can be readily incorporated into multi-query scenarios to accelerate problems arising from uncertainty quantification, control, inverse problems and optimization. 
}

\section{Introduction}
In this paper, we design a reduced order model (ROM) for a class of kinetic transport equation: the time-dependent radiative transfer equation (RTE), which provides prototype models for   optical tomography \cite{arridge2009optical}, radiative transfer \cite{pomraning1973equations}, remote sensing \cite{spurr2001linearized} and neutron transport \cite{lewis1984computational} etc.
The isotropic time-dependent RTE under the diffusive scaling is written as:
\begin{align}
    \vareps\partial_t f+\bv\cdot\nabla_{\bx} f = \frac{\sigma_s}{\vareps}(\lgl f\rgl-f)-\vareps\sigma_a f +  \vareps G.
    \label{eq:kinetic}
\end{align}
It  features three independent variables, $t \in {\mathcal R}^+, \bx \in \Omega_\bx, \bv\in\Omega_v$, denoting the time, spatial location, and angular direction. For the full 
model considered in this paper, $\Omega_v=\mathbb{S}^2$ is the unit sphere.  The equation models the transport and the interaction of the particles (e.g. photons) with the background media (e.g. through the scattering and absorption).
  The unknown $f(\bx,\bv,t)$ is the angular flux (also called the distribution of particles).   $\mathcal{L}_{\textrm{collision}}f=\sigma_s(\lgl f\rgl-f)$ is the scattering operator,  
  where $\lgl f\rgl = \frac{1}{|\Omega_v|}\int_{\Omega_v}f(\bx,\bv,t)d\bv$ is the scalar flux (also the density) which is the average of $f$ in the angular space. $G(\bx)$ is an isotropic source term. In \eqref{eq:kinetic}, $\sigma_s(\bx)\geq 0$ and $\sigma_a(\bx)\geq 0$ are, respectively, the scattering and absorption cross sections. The Knudsen number
$\vareps$ is the non-dimensional mean free path of the particles. 
The main challenges for numerically solving this equation come from its high dimensional and multiscale nature. First, the angular flux $f$ depends on  the phase variable $(\bx, \bv)$ and the time. 
Therefore, any standard grid-based method will  suffer from the curse of dimensionality. Second, the solution crosses different regimes thanks to its dependence on the non-dimensionalized mean free path $\vareps$. When $\vareps$ is $O(1)$, the problem is transport dominant. When $\vareps\rightarrow 0$ and $\sigma_s>0$, equation \eqref{eq:kinetic} converges to its diffusion limit:
\begin{align}
    \partial_t\rho - \nabla_\bx\cdot(\sigma_s^{-1} D\nabla_\bx\rho) = -\sigma_a \rho+G,
    \label{eq:diffusion_limit}
\end{align}
where $\rho(\bx,t)=\lgl f\rgl$ 
and $D=\textrm{diag}(\lgl v_x^2\rgl, \lgl v_y^2\rgl, \lgl v_z^2\rgl)$. 
This trans-regime behavior presents itself as both a challenge and an opportunity.

 To leverage the opportunity presented by the inherent structure of the equation in the diffusive regime and address the challenge especially of high dimensionality,    projection based ROMs and tensor decomposition based low rank algorithms have been designed for the stationary and time-dependent RTE.
Along the line of low rank algorithms based on tensor decomposition, dynamical low rank algorithm (DLRA)  \cite{peng2020low,einkemmer2021asymptotic,peng2021high}
and the proper generalized decomposition (PGD)
\cite{alberti2020reduced,prince2020space,dominesey2022reduced} 
 have been designed. Projection based ROMs have also been actively developed in the recent few years, for example the proper orthogonal decomposition (POD) and its variations \cite{buchan2015pod,coale2019reduced,coale2019reduced2,tano2021affine,behne2021model,choi2021space,hughes2022adaptive}, the  dynamical mode decomposition  (DMD)  \cite{mcclarren2018acceleration,mcclarren2022data}. 
Among those work, the POD methods in \cite{buchan2015pod, tencer2016reduced, hughes2022adaptive} and 
our previous work in reduced basis method (RBM) for the steady state problem \cite{peng2022reduced} make explicit use of the low rank structure of  the solution manifold induced by  the angular variable, namely, the ROM built is based on treating the angular variable as the ``parameter" of the model. Once such ROM surrogate is constructed, it can be used to achieve a fast online calculation of the angular flux in an angular direction outside the training set. We will also show in this paper that a fast calculation of high order moments of the angular flux can be obtained by using the ROM surrogate. Moreover, the ROM can be further incorporated to  multi-query scenarios to accelerate calculations in inverse problems and uncertainty quantification.

In this paper, we continue our effort in \cite{peng2022reduced} and take the RBM approach \cite{patera2007reduced,rozza2008reduced,haasdonk2017reduced}, which  is a projection-based model order reduction strategy for  parametric problems  and consists of  Offline and Online stages.
In the Offline stage, it constructs a low-dimensional reduced order subspace to approximate the underlying solution manifold of the parametric problem. In the Online stage, the reduced order solution for unseen parameter values
is sought through a  (Petrov-)Galerkin projection into the low-dimensional surrogate subspace constructed offline. RBM utilizes a greedy algorithm for constructing the surrogate subspace offline. It iteratively augments the reduced order subspace by greedily identifying the snapshot, via an error estimator or an error / importance indicator, corresponding to the most under-resolved parameter (were the current reduced space to be adopted) in the training set until the stopping criteria is satisfied.  

 While the angular variable is treated as the parameter of the model in our previous work in \cite{peng2022reduced} for the stationary RTE, here for the time-dependent RTE, we regard both the angular $\bv$ and temporal $t$ variables as parameters and build a RBM by leveraging the low-rank structure of the $(\bv, t)$-induced solution manifold.  As observed in \cite{peng2022reduced} for the stationary case, the solution  of the time-dependent RTE corresponding to different  angular directions $\bv$ are not decoupled, due to the integral operator for the scattering. This makes our problem very different from the standard parametric problems the vanilla RBM is applied to. Compared to \cite{peng2022reduced}, the present work presents several significant algorithmic advances.
Our full order and reduced order models are based on the 
micro-macro decomposition of the RTE  \cite{liu2004boltzmann} 
instead of the original form in \eqref{eq:kinetic} for directly solving $f$.  To improve the performance in the diffusive and intermediate regime, we design an  equilibrium-respecting strategy to construct reduced order subspaces which better utilize the structure of the decomposed system. We call the proposed method micro-macro decomposed reduced basis method (MMD-RBM). 
Furthermore, sampled angular variables are typically unstructured, and a direct robust and accurate quadrature rule to compute angular integrals 
is lacking. This is in particular crucial for time-dependent problems because it relates to the stability of the ROM.
A recipe for constructing such quadrature rules preserving positivity of the weights is provided.

The rest of the paper is organized as follows. In Section \ref{sec:fom}, we present the micro-macro decomposition and the associated full order solver. In Section \ref{sec:rb}, we present Offline and Online stages of the MMD-RBM and estimate the  computational cost. In Section \ref{sec:numerical}, the performance of the proposed methods are demonstrated through a series of numerical examples. At last, we draw conclusions in Section \ref{sec:conclusions}.

\section{Micro-macro decomposed RTE and its discretization\label{sec:fom}}

The radiative transfer equation (RTE) in \eqref{eq:kinetic} is multiscale in nature. When $\vareps=O(1)$, it  is transport dominant. On the other hand when $\vareps\rightarrow 0$, the model converges to its diffusion limit, and this   
can be illustrated 
through the micro-macro decomposition \cite{liu2004boltzmann}. Define $\Pi$ as the orthogonal projection onto the null space of the collision operator $\textrm{Null}(\mathcal{L}_\textrm{collision})$ in $L^2(\Omega_v)$. With the isotropic scattering being considered here, $\Pi f=\lgl f\rgl$. We decompose $f$ as
$f=\Pi f+(I-\Pi)f=\rho(\bx,t)+\vareps g(\bx,\bv,t)$, 
with $\rho(\bx,t)=\lgl f\rgl$ as the scalar flux (or called density).
Equation \eqref{eq:kinetic} can then be rewritten as the micro-macro decomposed system:
\begin{subequations}
\label{eq:micro-macro}
\begin{align}
  &\partial_t\rho +\nabla_{\bx}\cdot\lgl \bv g\rgl = -\sigma_a\rho+G, \label{eq:macro}\\
  \vareps^2&\partial_t g+ \vareps (I-\Pi)(\bv\cdot\nabla_\bx g)+\bv\cdot\nabla_\bx\rho = -\sigma_s g-\vareps^2\sigma_a g.
    \label{eq:micro}
\end{align}
\end{subequations}
As $\vareps\rightarrow 0$ and with
$\sigma_s(\bx)>0$,    
\eqref{eq:micro} becomes the local equilibrium
\begin{align}
    g=-\frac{1}{\sigma_s}\bv\cdot\nabla_{\bx}\rho.\label{eq:local_equilibrium}
\end{align} 
Substitute \eqref{eq:local_equilibrium} to 
\eqref{eq:macro}, we obtain the diffusion limit:
\[
    \partial_t\rho - \nabla_\bx\cdot(\sigma_s^{-1} D\nabla_\bx\rho) = -\sigma_a \rho+G,
\]
where $D=\textrm{diag}(\lgl v_x^2\rgl, \lgl v_y^2\rgl, \lgl v_z^2\rgl)$.

\subsection{Fully discretized micro-macro decomposed system}
\label{sec:FOM}

 When standard numerical methods are applied to solve \eqref{eq:kinetic}, the computational cost can be prohibitive  when $\vareps\ll 1$, as the mesh sizes smaller than $\vareps$ are often needed for both accuracy and stability \cite{caflisch1997uniformly,naldi1998numerical}. A numerical method for \eqref{eq:kinetic} is said to be asymptotic preserving (AP) \cite{jin2010asymptotic} if it preserves the asymptotic limit as $\vareps\rightarrow 0$ at the discrete level, namely,  as $\vareps\rightarrow 0$ the method  becomes a consistent and stable discretization for the limiting model. AP methods can work uniformly well for the model with a broad range of $\vareps$, particularly with $\vareps\ll1$ on under-resolved  meshes. This type of methods will be our choice as full order methods. 
In particular, in this work we adapt the IMEX-DG-S method \cite{peng2021asymptotic} to multiple dimensions. 
The  method is AP, with desirable time step conditions for stability, specifically, it is  unconditionally stable in the diffusive regime ($\vareps\ll 1$) and conditionally stable with a hyperbolic-type 
CFL condition in the transport regime
($\vareps=O(1)$). 
Alternatively, one can use other AP schemes based on  the micro-macro decomposition as the full order model, such as \cite{lemou2008new,jang2015high,peng2020stability}, which can have different stability property in the  diffusive regime ($\vareps\ll 1$). 

In this work, we assume all unknowns are independent of the $z$ variable, namely, $\partial_z \rho=\partial_z f=\partial_z g=0$. With this, we consider  $\Omega_\bx=[x_L,x_R]\times[y_L,y_R]$ in two space dimensions (with $d=2$) and $\Omega_v=\mathbb{S}^2$ as the angular space. The methodology developed here can be extended to $\Omega_\bx$ in three dimensions straightforwardly.
Next, we will present our full order method, starting from the time discretization.

\bigskip 
\noindent {\bf Time discretization:} To achieve unconditional stability in the diffusion  dominant regime as well as the AP property, the time discretization is defined as follows. Given the solutions $\rho^n$ and $g^n$ at $t^n=n\Dt$, we seek $\rho^{n+1}$ and $g^{n+1}$ such that
\begin{subequations}
\label{eq:semi-discretization}
    \begin{align}
        &\frac{\rho^{n+1}-\rho^n}{\Dt}+\nabla_{\bx}\cdot\lgl\bv g^{n+1}\rgl = -\sigma_a \rho^{n+1}+G^{n+1},
        \label{eq:macro_semi}\\
    \vareps^2&\frac{g^{n+1}-g^n}{\Dt} + \vareps(I-\Pi)(\bv\cdot\nabla_{\bx} g^n)+\bv\cdot\nabla_{\bx}\rho^{n+1} = -\sigma_s g^{n+1}-\vareps^2\sigma_a g^{n+1}. \label{eq:micro_semi}   
    \end{align}
\end{subequations}
As $\vareps\rightarrow0$ and with $\sigma_s>0$, \eqref{eq:micro_semi} becomes 
\begin{align}
    g^{n+1}=-\frac{1}{\sigma_s}\bv\cdot\nabla_{\bx}\rho^{n+1}.\label{eq:local_equ}
\end{align}
Substituting \eqref{eq:local_equ} into \eqref{eq:macro_semi}, we obtain the limit of scheme \eqref{eq:semi-discretization}  as $\vareps\rightarrow0$, 
\[
    \frac{\rho^{n+1}-\rho^n}{\Dt}-\nabla_\bx\cdot(\sigma_s^{-1} D\nabla_\bx\rho^{n+1})=-\sigma_a\rho^{n+1}+G^{n+1}.
\]
This is nothing but the backward Euler method for the diffusion limit in  \eqref{eq:diffusion_limit}.
Hence, this time discretization is AP.

\bigskip
\noindent {\bf Angular discretization:} In the angular space, we apply the discrete ordinates ($S_N$) method \cite{pomraning1973equations}. Let  $\{\bv_j\}_{j=1}^{N_v}$ be a set of quadrature points in $\Omega_v$  and $\{\omega_j\}_{j=1}^{N_v}$ be the corresponding quadrature weights, satisfying $\sum_{j=1}^{N_v}\omega_j=1$. The semi-discrete system  \eqref{eq:semi-discretization}
is further  discretized in the angular variable,  following  a collocation approach, by being evaluated at $\{\bv_j\}_{j=1}^{N_v}$,  with the integral operator $\lgl\cdot\rgl$ 
approximated by its discrete analogue: 
\begin{equation}
    \lgl f\rgl \approx \lgl f \rgl_h = \sum_{j=1}^{N_v}\omega_j f(\cdot,\bv_j,\cdot). 
    \label{eq:d:int}
\end{equation}
We require the quadrature rule to  satisfy 
\begin{equation}
\lgl v_\xi v_\eta\rgl_h = \lgl v_\xi v_\eta\rgl =\frac{1}{3}\delta_{\xi\eta},\;\xi,\eta \;\in \{x,y,z\},\;\delta_{\xi\eta}=\begin{cases}
1,\;\xi=\eta\\
0,\;\xi\neq\eta
\end{cases}, 
\label{eq:quad_requirement}
\end{equation} 
so the coefficient matrix $D=\textrm{diag}(\lgl v_x^2\rgl, \lgl v_y^2\rgl, \lgl v_z^2\rgl)$ will be exact, and the correct diffusion limit will be obtained for the full order model without cross-derivative terms (see Section \ref{sec:mat-vec-schur}).
Particularly, with $\Omega_v=\mathbb{S}^2$, we use the Lebedev quadrature rule \cite{lebedev1976quadratures} in our fully-discrete method unless otherwise specified. 
\bigskip

\noindent {\bf Spatial discretization:}
In the physical space, we apply a discontinuous Galerkin (DG) discretization. 
Letting 
\[
    \mathcal{I}_h=\left\{\mathcal{I}_{kl}=[x_{k-\half},x_{k+\half}]\times[y_{l-\half},y_{l+\half}], 1\leq k
    \leq N_x,1\leq l
    \leq N_y\right\}
\]
be a partition of the physical domain $\Omega_\bx$, we define  the discrete space as 
\[
    U_h^K(\Omega_\bx):=\{u(\bx): u(\bx)|_{\mathcal{I}_{kl}}\in Q^K(\mathcal{I}_{kl}),1\leq k
    \leq N_x,1\leq l
    \leq N_y\},
\]
where  $Q^K(\mathcal{I}_{kl})$ is the bi-variate polynomial space with the degree in each direction
at most $K$ on the element $\mathcal{I}_{kl}$. 
We also write  $\phi(x_0^{\pm},y)=\lim_{x\rightarrow x_0^{\pm}}\phi(x,y)$ and $\phi(x,y_0^\pm)=\lim_{y\rightarrow y_0^{\pm}}\phi(x,y)$.

Let the numerical solution at $t^n$ be $\rho^n_h(\cdot)\approx \rho(\cdot,t^n)$ and $g_{h,j}^n(\cdot)\approx g(\cdot,\bv_j,t^n), \forall j=1,\dots, N_v$.
With a DG discretization applied in space, we reach our fully-discrete scheme: 
given $\rho_h^n \in U_h^K, \{g_{h,j}^n\}_{j=1}^{N_v}\subset U_h^K$, we seek  $\rho_h ^{n+1}\in U_h^K, \{g_{h,j}^{n+1}\}_{j=1}^{N_v}\subset U_h^K$, satisfying the following equations $\forall k=1,\dots,N_x, l=1,\dots,N_y$, 
\begin{subequations}
\label{eq:dg}
    \begin{align}
    &\int_{\mathcal{I}_{kl}} \frac{\rho_h^{n+1}-\rho_h^n}{\Dt}\phi_h d\bx
    +{\sum_{\gamma=1}^{N_v}\omega_\gamma\int_{\mathcal{I}_{kl}}\left(\mathcal{D}^{g}_x( v_{\gamma,x} g_{h,\gamma}^{n+1};\rho_h^{n+1})+\mathcal{D}^{g}_y( v_{\gamma,y} g_{h,\gamma}^{n+1};\rho_h^{n+1})
    \right)\phi_h d\bx}\notag\\
    &=\int_{\mathcal{I}_{kl}}(-\sigma_a \rho_h^{n+1}+G^{n+1}
     )\phi_h d\bx,\quad \forall \phi_h\in U_h^K,  \label{eq:dg_macro} \\
    &\vareps^2\int_{\mathcal{I}_{kl}} \frac{g_{h,j}^{n+1}-g_{h,j}^n}{\Dt} \psi_h d\bx+\int_{\mathcal{I}_{kl}}\left(v_{j,x}\mathcal{D}^-_x+v_{j,y}\mathcal{D}^-_y\right) \rho_{h}^{n+1}\;\psi_h d\bx\notag\\
       &+\vareps\sum_{\gamma=1}^{N_v}(\delta_{j\gamma}-\omega_\gamma)\int_{\mathcal{I}_{kl}}\left(\mathcal{D}_x^{\textrm{up}} (v_{\gamma,x}, g_{h,\gamma}^{n})+\mathcal{D}^{\textrm{up}}_y (v_{\gamma,y}, g_{h,\gamma}^{n})\right)\psi_hd\bx
    \notag\\
    &= 
    -\int_{\mathcal{I}_{kl}}(\sigma_s+\vareps^2\sigma_a)g_{h,j}^{n+1}\psi_h d\bx,\qquad \forall \psi_h\in U_h^K, \forall j=1,\dots N_v. 
\label{eq:dg_micro}
    \end{align}
\end{subequations}
Here $\delta_{j\gamma}$ is the Kronecker delta, $\mathcal{D}_x^-(\cdot), \mathcal{D}_y^-(\cdot), \mathcal{D}_x^g(\cdot;\cdot),\mathcal{D}_y^g(\cdot;\cdot), \mathcal{D}_x^{\textrm{up}}(\cdot,\cdot), \mathcal{D}_y^\textrm{up}(\cdot,\cdot) \in U_h^K$ are all discrete (partial) derivatives, and they can be  expressed in terms of  $\mathcal{D}_x^\pm(\cdot), \mathcal{D}_y^\pm(\cdot)\in U_h^K$ that are defined as follows
\begin{subequations}
    \label{eq:discrte_alt_derivatives}
    \begin{align}
    \int_{\mathcal{I}_{kl}}\mathcal{D}^\pm_x\phi_h\psi_h d\bx
    = &-\int_{\mathcal{I}_{kl}}\phi_h\partial_x\psi_h d\bx
      +\int_{y_{l-\half}}^{y_{l+\half}}\phi_h(x_{k+\half}^\pm,y)\psi_h(x_{k+\half}^-,y)dy\notag\\
      &-\int_{y_{l-\half}}^{y_{l+\half}}\phi_h(x_{k-\half}^\pm,y)\psi_h(x_{k-\half}^+,y)dy, \qquad  \forall \psi_h\in U_h^K,\\
    \int_{\mathcal{I}_{kl}}\mathcal{D}^\pm_y\phi_h\psi_h d\bx
    = &-\int_{\mathcal{I}_{kl}}\phi_h\partial_y\psi_h d\bx
      +\int_{x_{k-\half}}^{x_{k+\half}}\phi_h(x,y_{l+\half}^\pm)\psi_h(x,y_{l+\half}^-)dx\notag\\
      &-\int_{x_{k-\half}}^{x_{k+\half}}\phi_h(x,y_{l-\half}^\pm)\psi_h(x,y_{l-\half}^+)dx, \qquad  \forall \psi_h\in U_h^K.
    \end{align}
\end{subequations}
With $\bv\cdot\nabla_\bx g^n$ in \eqref{eq:micro_semi}  discretized following an upwind mechanism, we set 
    \begin{align*}
    &\mathcal{D}^{\textrm{up}}_x(v_{x},\phi_h)=v_x \mathcal{D}^{\star}_x(\phi_h), \qquad  
   \text{with}\;\star=\begin{cases}
                       -,\;v_x\geq 0,\\
                        +,\;v_x<0,
                        \end{cases}\\
&\mathcal{D}^{\textrm{up}}_y(v_{y},\phi_h)=v_y \mathcal{D}^{\star}_y(\phi_h), \qquad  
  \text{with}\;\star=\begin{cases}
                        -,\;v_y\geq 0,\\
                      +,\;v_y<0.
                      \end{cases}
    \end{align*}
Moreover, we take
\begin{equation}
\label{eq:discrte_g_derivatives}
    \mathcal{D}^{g}_\xi( v_{\gamma,\xi} g_{h,\gamma};\rho_h)= v_{\gamma,\xi} \mathcal{D}^{+}_\xi g_{h,\gamma} + \alpha_\xi\mathcal{D}^{\textrm{jump}}_\xi \rho_h,\quad\text{with }\xi=x,y.
\end{equation}

Here, $\mathcal{D}^{\textrm{jump}}_x(\cdot) \in U_h^K$, given locally on the element $\mathcal{I}_{kl}$ by
\begin{align}
\int_{\mathcal{I}_{kl}}\mathcal{D}^{\textrm{jump}}_x(\rho_h)\psi_h d\bx
&= \int_{y_{l-\half}}^{y_{l+\half}}\left(\rho_h(x_{k+\half}^-,y)-\rho_h(x_{k+\half}^+,y)\right)\psi_h(x_{k+\half}^-,y)dy\notag\\
&-\int_{y_{l-\half}}^{y_{l+\half}}\left(\rho_h(x_{k-\half}^-,y)-\rho_h(x_{k-\half}^+,y)\right)\psi_h(x_{k-\half}^+,y)dy,\quad  \forall \psi_h\in U_h^K,\notag
\end{align}
and equivalently,  
\[
\mathcal{D}^{\textrm{jump}}_x(\rho_h)=\mathcal{D}^{-}_x(\rho_h)-\mathcal{D}^{+}_x(\rho_h).
\]
Similarly
\begin{equation}
 \mathcal{D}^{\textrm{jump}}_y(\rho_h)=\mathcal{D}^{-}_y(\rho_h)-\mathcal{D}^{+}_y(\rho_h).
    \label{eq:jump:y}
\end{equation}
The jump operators are added in  \eqref{eq:discrte_g_derivatives} to   maintain accuracy in the case of the Dirichlet boundary conditions \cite{castillo2000priori}. As shown in  \cite{castillo2000priori}, the constants $\alpha_x$, $\alpha_y$ in \eqref{eq:discrte_g_derivatives} need to be $O(1)$ and positive.  In this paper, we consider the vacuum boundary condition. In all the discrete derivatives, when the data from the outside of the domain is needed for the solution, we directly set it as $0$. 

From here on, we refer to the fully-discrete method \eqref{eq:dg} along with \eqref{eq:discrte_alt_derivatives}-\eqref{eq:jump:y} as the full order model denoted as FOM. Given that our plan is to treat the angular variable $\bv$ as a parameter to formulate reduced order models, when we want to emphasize the set of the angular values $\mcV$ (and its ``associated'' quadrature weights in \eqref{eq:d:int}) used to define \eqref{eq:dg}, we also write it as FOM($\mcV$). As an example,  we have $\mcV=\{\bv_j\}_{j=1}^{N_v}$  for \eqref{eq:dg}.

\subsection{Matrix-vector form and Schur complement}
\label{sec:mat-vec-schur}
Though $\lgl vg\rgl$ is treated implicitly in \eqref{eq:dg_macro},
we only need to invert a discrete heat operator for $\rho$ with the help of  the Schur complement, and this will be demonstrated next via the matrix-vector form of the scheme.  Let $\{e_l(\bx)\}_{l=1}^{N_{\bx}}$ be a basis of the DG space $U_h^K$, then $\rho_h^n$ and $g_{h,j}^n$ can be expanded as
$\rho_h^n(\bx)=\sum_{l=1}^{N_{\bx}}\rho_l^n e_l(\bx)$ and    $g_{h,j}^n(\bx)=\sum_{l=1}^{N_{\bx}}g_{l,j}^ne_l(\bx).$
Defining $\brho^n=(\rho_1^n,\dots,\rho_{N_{\bx}}^n)^T$ and $\bg_j^n=(g_{1,j}^n,\dots,g_{N_{\bx},j}^n)^T$, we are ready to rewrite \eqref{eq:dg} into its matrix-vector formulation:
\begin{subequations}
\label{eq:dg-matrix-vector}
\begin{align}
&\mathcal{A}\left({\boldsymbol{\rho} }^{n+1},{\bg}_1^{n+1},{\bg}_2^{n+1},\dots{\bg}_{N_v}^{n+1}\right)^T
=\left({\bf{b}}_\rho^n,{\bf{b}}_{g_1}^n,{\bf{b}}_{g_2}^n,\dots,{\bf{b}}_{g_{N_v}}^n\right)^T,\label{eq:matrix-vector1}
\\[0.15\baselineskip]
&\mathcal{A}=
\left(
\begin{matrix}
M + \Dt\Sigma_a+\Dt D^\textrm{jump}	  	&\Dt \omega_1 (v_{1,x}D_x^++v_{1,y}D^+_y) 		&\dots 	& \Dt \omega_{N_v}(v_{N_v,x}D_x^++v_{N_y,y}D^+_y) \\
\Dt(v_{1,x}D_x^-+v_{1,y}D^-_y) 	& \Theta &\dots 	& 0 \\
\vdots	  			& \vdots 	&\ddots	&\vdots\\
\Dt(v_{N_v,x}D_x^-+v_{N_v,y}D^-_y)	& 0	&\dots  	&\Theta
\end{matrix}
\right),\\
& {\bf{b}}_\rho^n = M\brho^n+\Dt\bG^{n+1},\\
&{\bf{b}}_{g_j}^n = \vareps^2 M\bg_j^n-\vareps\Dt\sum_{\gamma=1}^{N_v}(\delta_{j\gamma}-\omega_\gamma)(D_{x,v_{\gamma,x}}^\textrm{up}+D_{y,v_{\gamma,y}}^\textrm{up})\bg_\gamma^n, \quad j=1,\dots, N_v
\end{align}
\end{subequations}
Here $M$ is the mass matrix, $\Sigma_s$ (resp. $ \Sigma_a$) is  the scattering (resp. absorption) matrix, $D^{\textrm{jump}}$ is the jump matrix, $D_\xi^\pm$, {$D^{\textrm{up}}_{\xi,v_{\gamma,\xi}}$} ($\xi=x,y, \gamma=1,\dots, N_v$) are  discrete derivatives matrices, all being of the size $N_{\bx}\times N_{\bx}$ ($N_{\bx}$ is the number of degrees of freedom resulting from the spatial discretization), with their $(kl)$-th entry given as:
\begin{align*}
 &M_{kl}=\int_{\Omega_x}e_l e_k d\bx, \quad (\Sigma_s)_{kl}=\int_{\Omega_x}\sigma_se_l e_k d\bx,\quad (\Sigma_a)_{kl}=\int_{\Omega_x}\sigma_ae_l e_k d\bx,\\
& (D_{\xi}^\pm)_{kl}=\int_{\Omega_x}\mathcal{D}_{\xi}^\pm e_le_kd\bx, \quad (D^{\textrm{up}}_{\xi,v_{\gamma,\xi}})_{kl}=\int_{\Omega_x}\mathcal{D}_{\xi}^{\textrm{up}}(v_{\gamma,\xi},e_l)e_k d\bx, \quad (\text{with}\; \xi=x,y),\\
& D^\textrm{jump}=\alpha_x(D_x^--D_x^+)+\alpha_y (D_y^--D_y^+).
\end{align*}
In addition, $\bG^{n+1}$ is the source vector, with its $k$-th entry $\int_{\Omega_x}G^{n+1}e_kd\bx$, and $\Theta = \vareps^2 (M +\Dt\Sigma_a)+ \Dt\Sigma_s$. Using the standard choices of the basis of $U_h^K$ (e.g with the support of each basis function being one mesh element), the matrices $M$, $\Sigma_s$, $\Sigma_a$ and $\Theta$ are block-diagonal. When the boundary conditions are periodic  or vacuum in space, one can easily show  $D^+_\xi=-(D^-_\xi)^T$ with $\xi=x,y$ (see \cite{peng2021asymptotic} for details).

To avoid inverting the big matrix $\mathcal{A}$ directly, we apply the Schur complement. Noticing that 
\begin{equation}
\bg_j^{n+1} = \Theta^{-1}\left( {\bf{b}}_{g_j}^n - \Dt(v_{j,x}D_x^-+v_{j,y}D^-_y)\brho^{n+1}\right), \quad\forall j=1,\dots, N_v,
\label{eq:schur_step1}
\end{equation}
we eliminate $\bg_j^{n+1}$ terms in the equation determined by the first line of $\mathcal{A}$ and obtain 
\begin{equation}
    \mathcal{H}\brho^{n+1}= \widetilde{\mathbf{b}}_\rho^{n},  \label{eq:schur}
\end{equation}
where
\begin{align}
  \mathcal{H}&=M+\Dt\Sigma_a+\Dt D^\textrm{jump} - \Dt^2\sum_j \omega_j (v_{j,x} D_x^++v_{j,y}D_y^+)\Theta^{-1}(v_{j,x} D_x^-+v_{j,y}D_y^-) \notag \\
  &=  M+\Dt\Sigma_a+\Dt D^\textrm{jump} - \Dt^2(\lgl v_x^2\rgl_h  D_x^+\Theta^{-1}D_x^-+\lgl v_y^2\rgl_h D_y^+\Theta^{-1}D_y^-).\notag
\end{align}
 The second line above is a direct result of  $\lgl v_{x}v_{y}\rgl_h=\lgl v_{x}v_{y}\rgl=0$ in \eqref{eq:quad_requirement}.
With \eqref{eq:schur}, we only need to invert a linear system $\eqref{eq:schur}$ of a much smaller size for $\rho$. Moreover, $\mathcal{H}$ is a discrete heat operator, and it  is symmetric positive definite due to $D^+_\xi=-(D^-_\xi)^T$ with  $\xi=x,y$,  and hence can be   efficiently inverted, e.g. by the conjugate  gradient (CG)  method with  algebraic multigrid (AMG) preconditioners. Once  $\brho^{n+1}$ is available,  $\bg_j^{n+1}$ can be obtained from \eqref{eq:schur_step1}, and this can be carried out in a parallel fashion, given that $\Theta$ is block-diagonal and the equations \eqref{eq:schur_step1} in $j$ are decoupled.

\subsection{Stability}
When $U_h^K$ with $K=0$ is used (as numerically tested in Section \ref{sec:numerical}), our FOM method is first order accurate, and its stability can be established by following similar techniques in \cite{peng2021asymptotic}, and this result  will play an important role in the design of the ROM. The key to prove the stability in \cite{peng2021asymptotic} is to introduce the following discrete energy:
\begin{equation}
\label{eq:mu_energy}
E_{h}^n=||\rho_h^n||^2+\vareps^2 \sum_{j=1}^{N_v}\omega_j||g_{h,j}^{n}||^2+
\Dt \sum_{j=1}^{N_v}\omega_j \int_{\Omega_x}\sigma_s(g_{h,j}^n)^2d\bx,
\end{equation}
where $||\cdot||$ is the standard $L^2$ norm in $L^2(\Omega_x)$.
With $\sigma_s\geq 0$, the term $E_{h}^n$ is non-negative and gives a well-defined energy. Using similar techniques in \cite{peng2021stability,peng2021asymptotic}, we can extend the Theorem 5.4 in \cite{peng2021asymptotic} from 1D to 2D. We next state this result, presented in in the context of the current work.
\begin{thm}\label{thm:stability}
\textbf{(Stability condition)}\footnote{This theorem can be established by following the proofs of Theorem 5.3 and Theorem 5.4 in \cite{peng2021asymptotic} for the one spatial dimension case. The only difference is that, due to the extra dimension in space, there will be two extra terms similar to equations (5.7) and (5.8) of \cite{peng2021asymptotic} in an equality similar to equation (5.5) of \cite{peng2021stability}. }
Suppose $\omega_j\geq0, \forall 1\leq j\leq N_v$, and $\sigma_s\geq \sigma_m > 0$. 
Let $h=\min(\min_{1\leq i\leq N_x}(x_{i+\half}-x_{i-\half}), \min_{1\leq i\leq N_y}(y_{i+\half}-y_{i-\half}))$, we have that
\begin{itemize}
    \item[(1)] when $\frac{\vareps}{\sigma_m h}\leq \frac{1}{4\max_{1\leq j\leq N_v}|\bv_j|_\infty}$, $E_{h}^{n+1}\leq E_{h}^n$  $\; \forall \Dt>0$;
    \item[(2)]  when $\frac{\vareps}{\sigma_m h}\leq \frac{1}{4\max_{1\leq j< N_v}|\bv_j|_\infty}$,  $E_{h}^{n+1}\leq E_{h}^n$ under the time step condition
    $$\Dt\leq\frac{ \vareps h}{4\max_{1\leq j\leq N_v}|\bv_j|_\infty-\sigma_m h/\vareps}.$$
\end{itemize}
\end{thm}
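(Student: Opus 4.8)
The plan is to mimic the one-dimensional energy argument of \cite{peng2021asymptotic}, tracking the extra contributions coming from the second spatial direction. The starting point is the fully-discrete scheme \eqref{eq:dg}. In \eqref{eq:dg_macro} I would test with $\phi_h = 2\Dt\,\rho_h^{n+1}$, and in \eqref{eq:dg_micro} I would test with $\psi_h = 2\Dt\,\omega_j g_{h,j}^{n+1}$, then sum the micro equation over $j=1,\dots,N_v$. Using the elementary identity $2a(a-b) = a^2 - b^2 + (a-b)^2$ on the mass-matrix terms produces, after summing over all elements $\mathcal{I}_{kl}$, the difference $E_h^{n+1} - E_h^n$ on the left together with the nonnegative numerical-dissipation squares $\|\rho_h^{n+1}-\rho_h^n\|^2$ and $\vareps^2\sum_j\omega_j\|g_{h,j}^{n+1}-g_{h,j}^n\|^2$, plus the absorption terms $2\Dt\|\sqrt{\sigma_a}\rho_h^{n+1}\|^2 + 2\Dt\vareps^2\sum_j\omega_j\|\sqrt{\sigma_a}g_{h,j}^{n+1}\|^2 \ge 0$ and the collision term $2\Dt\sum_j\omega_j\|\sqrt{\sigma_s}g_{h,j}^{n+1}\|^2$, which combines with the $\sigma_s$-part of $E_h^n$ appearing in \eqref{eq:mu_energy}. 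Here the requirement $\omega_j \ge 0$ is exactly what guarantees that the weighted sums in $E_h^n$ and in all the dissipation terms are genuine (nonnegative) energies.

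The next step is to handle the transport/coupling terms, which are the only ones that are not sign-definite. There are three families: the term $\sum_\gamma \omega_\gamma (v_{\gamma,x}D_x^+ + v_{\gamma,y}D_y^+)$ acting on $g$ in the macro equation, the term $(v_{j,x}D_x^- + v_{j,y}D_y^-)\rho$ in the micro equation, and the jump term $D^{\textrm{jump}}$ in the macro equation together with the $\alpha_\xi\mathcal{D}^{\textrm{jump}}_\xi\rho_h$ contributions inside $\mathcal{D}^g_\xi$. Using the adjointness relation $D_\xi^+ = -(D_\xi^-)^T$ valid for vacuum (or periodic) boundary conditions, the first two families cancel up to boundary/jump remainders, exactly as in 1D, but now with separate $x$- and $y$-contributions; this is where the two "extra terms similar to (5.7)–(5.8) of \cite{peng2021asymptotic}" enter. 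The surviving pieces are: interface jump contributions in $\rho$ of the form $\sum \int [\![\rho_h^{n+1}]\!]^2$ (from $D^{\textrm{jump}}$ and the $\mathcal{D}^{\textrm{jump}}$ pieces, positive because $\alpha_x,\alpha_y>0$), and the genuinely indefinite cross term coming from the explicit $(I-\Pi)(\bv\cdot\nabla_\bx g^n)$ piece, i.e. $2\vareps\Dt\sum_{j,\gamma}\omega_j(\delta_{j\gamma}-\omega_\gamma)\langle \mathcal{D}^{\textrm{up}}(v_{\gamma},g_{h,\gamma}^n)\,,\,g_{h,j}^{n+1}\rangle$, which mixes the time levels $n$ and $n+1$.

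The heart of the proof is to absorb this last cross term. The plan is to bound it using Cauchy–Schwarz/Young's inequality, splitting the $\vareps\Dt$ factor so that one half of the product is controlled by the dissipation square $\vareps^2\sum_j\omega_j\|g_{h,j}^{n+1}-g_{h,j}^n\|^2$ already sitting on the good side, and the other half by the collision dissipation $\Dt\sum_j\omega_j\|\sqrt{\sigma_s}g_{h,j}^n\|^2$ coming out of $E_h^n$. The upwind discrete derivatives $\mathcal{D}^{\textrm{up}}_\xi$ satisfy an inverse inequality $\|\mathcal{D}^{\textrm{up}}_\xi(v_\xi,\phi_h)\| \lesssim |v_\xi| h^{-1}\|\phi_h\|$ on the piecewise-constant ($K=0$) space, and the coefficients satisfy $\sum_\gamma \omega_\gamma|\delta_{j\gamma}-\omega_\gamma| \le 2$; combining these, the cross term is controlled by a constant times $\vareps\Dt\, h^{-1}\max_j|\bv_j|_\infty$ times a product of norms, and the Young split then yields precisely the scale $\vareps/(\sigma_m h) \le 1/(4\max_j|\bv_j|_\infty)$ in case (1) (where the $\rho$-mass dissipation plus the two kinds of $g$-dissipation suffice for all $\Dt$), and in case (2) the sharper condition with the additional $\Dt$ appearing because part of the $g^{n+1}-g^n$ square must be traded against $\Dt\|\sqrt{\sigma_s}g^n\|^2$. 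Assembling all the pieces gives $E_h^{n+1} \le E_h^n$ under the stated hypotheses.

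The main obstacle, as just indicated, is the careful bookkeeping of the indefinite explicit cross term: one must choose the Young's-inequality weights so that what is borrowed never exceeds what the dissipation terms in $E_h^{n+1}-E_h^n$ and in $E_h^n$ actually provide, and one must do this while keeping the constant sharp enough to produce the factor $1/(4\max_j|\bv_j|_\infty)$. The two-dimensional setting does not introduce any conceptual difficulty beyond the 1D case; it only doubles the directional terms, so the constant $4$ (rather than, say, $2$) reflects the sum over the $x$- and $y$-contributions. Everything else — the telescoping of the mass terms, the positivity of the jump and absorption contributions, the exact cancellation of the implicit transport coupling via $D_\xi^+=-(D_\xi^-)^T$ — is routine once the test functions are chosen as above.
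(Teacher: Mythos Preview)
Your proposal is correct and matches the paper's own treatment: the paper gives no proof beyond the footnote pointing to the one-dimensional energy argument in \cite{peng2021asymptotic}, and your sketch --- test with $2\Dt\,\rho_h^{n+1}$ and $2\Dt\,\omega_j g_{h,j}^{n+1}$, cancel the implicit coupling via $D_\xi^+=-(D_\xi^-)^T$, and absorb the explicit upwind cross term by a Young/inverse-inequality split against the $\sigma_s$- and numerical-dissipation terms --- is exactly that argument with the additional $y$-direction contributions (the ``two extra terms'' the footnote mentions).
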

The theorem implies that the scheme is unconditionally stable in the diffusive regime (i.e. when $\vareps/(\sigma_m h)$ is small enough), and the stability condition in the transport regime (i.e. $\vareps=O(1)$) is on the same level as the standard CFL condition (i.e. $\Dt\leq O(\vareps h)$).

\section{The micro-macro decomposed reduced basis method}
\label{sec:rb}

Our proposed MMD-RBM algorithm consists of an Offline stage, which constructs the low dimensional subspaces and a reduced quadrature rule, and an Online stage which features a surrogate solver capable of efficiently computing moments {of $f$ and predicting  the angular flux $f$} corresponding to angular directions 
unseen during the Offline stage. 
In this section, we outline the entire algorithm in Section \ref{sec:rb:alg}. In particular, we provide a high-level sketch in Figure \ref{fig:algorithm_flow} to assist reading. We then discuss each step of the Online and Offline stages in Sections \ref{sec:online:what2do} and \ref{sec:offline}, respectively. A computational complexity analysis is provided in Section \ref{sec:cost} relating the cost of MMD-RBM with those of vanilla POD and brute force FOM.


\subsection{Outline of the MMD-RBM algorithm}
\label{sec:rb:alg}

The flowchart of the entire algorithm  is summarized  in Figure \ref{fig:algorithm_flow}. 
Other than the clear distinction of Offline and Online stages, 
another feature of this algorithm is that
\[
\mbox{ROM}(\cdot;\cdot,\cdot),
\]
representing our reduced order (thus online) solver, appears offline too, albeit with a pair of dynamically expanding surrogate spaces  as the second and third input. 
Being a critical step in the greedy algorithm, this solver helps to recursively build the reduced parameter sets and augment the surrogate spaces in a greedy fashion.
For this reason, before we dive into the detailed description of the Offline stage in Section \ref{sec:offline}, we first introduce in Section \ref{sec:online:what2do} this reduced formulation which corresponds to the full-order scheme \eqref{eq:dg}. 

Specifically, in Section \ref{sec:online:what2do}, we introduce our projection-based reduced formulation ROM($\mcV; U^\rho_{h,r}, U^g_{h,r}$). Here  $U^\rho_{h,r}$ is the reduced order space  for $\rho$, $U^g_{h,r}$  is the reduced order space  for $g$, and $\mcV$ is the angular set used in the angular discretization. We assume that there are quadrature weights $\{\omega_{\bv}\}_{\bv\in \mcV}$ associated with $\mcV$, and the discrete analogue $\lgl \cdot\rgl_{h,\mcV}$ for the integral operator $\lgl \cdot\rgl$. In the  online surrogate solver, we solve ROM($\mcV_{\textrm{rq}};U_{h,r}^\rho,U_{h,r}^g$) with the terminal $U_{h,r}^\rho$ and $U_{h,r}^g$; and in the greedy sampling offline, we solve ROM($\mcV_{\textrm{train}};U_{h,r}^\rho,U_{h,r}^g$) with the current (and to-be-updated) $U_{h,r}^\rho$ and $U_{h,r}^g$. 
Here, $\mcV_{\textrm{rq}}$ is the (usually unstructured) set of angular values identified by the Offline algorithm while $\mcV_{\textrm{train}}$ denotes the (usually structured) training set of the angular directions specified at the beginning of the Offline algorithm. 
\begin{figure}
\begin{center}
\scalebox{0.88}{
\begin{tikzpicture}
\tikzset{every node}=[font=\sffamily\sansmath]
\node (Stopping) [diamond,  line width = 2pt, aspect = 3.0,text width=3.0cm,minimum height =1.0cm,draw, thick,fill=cyan] {Stopping criteria satisfied?};
\node(Initialization)[rectangle,draw,thick,fill=cyan,above=of Stopping,text width=11.0cm,yshift=-0.5cm]{Initialization:(1) sampled parameter sets, (2) reduced quadrature nodes $\mcV_{\textrm{rq}}$, (3) reduced spaces $U_{h,r}^\rho$ and $U_{h,r}^g$ via FOM($\mcV_{\textrm{rq}}$)};
\node(Input)[rectangle,draw,above=of Initialization,text width=11.0cm,rounded corners,yshift=-0.15cm,fill=lime]{Input:  temporal mesh $\mcT_{\textrm{train}}$ and angular training set $\mcV_{\textrm{train}}$};
\node (Indicator) [block_device_small,text width=4.0cm,draw, thick,fill=cyan,below=of Stopping,xshift=3.0cm] {Compute error indicators};
\node (ReducedSolve) [block_device_small,text width=5.0cm,draw, thick,fill=cyan,left=of Indicator] {Solve ROM($\mcV_{\textrm{train}};U_{h,r}^\rho,U_{h,r}^g$)};
\node (Sampling) [block_device_small,text width=3.5cm,draw, thick,fill=cyan,below=of Indicator,yshift=0.5cm] {Greedy selection  of  angular and time sample};
\node (SymmetryEnhancing) [block_device_small,text width=5.5cm,draw, thick,fill=cyan,left=of Sampling,xshift=0.25cm] {
$\star$ Symmetry-enhancing update\\
$\star$ Update reduced quadrature rule $\lgl \cdot\rgl_{h,\mcV_{\textrm{rq}}}$
};
\node(UpdateBasis)[block_device,draw,below=of SymmetryEnhancing,text width=5cm,xshift=2.5cm]{Solve FOM($\mcV_{\textrm{rq}}$) and update spaces $U_{h,r}^\rho,U_{h,r}^g$};
\node[draw,densely dotted,inner xsep=2mm,inner ysep=2.0mm,fit=(ReducedSolve)(Indicator)(Sampling)(SymmetryEnhancing)(UpdateBasis),label={[xshift=2.25cm,yshift=-0.1cm]Greedy procedure}](GreedySampling){};
\node[draw,dashed,inner xsep=4mm,inner ysep=2.0mm,xshift=-2.5mm,fit=(ReducedSolve)(Indicator)(Sampling)(SymmetryEnhancing)(UpdateBasis)(Initialization)(Stopping)(GreedySampling),label={[xshift=2.25cm,yshift=-0.1cm]\textbf{Offline stage}}](Offline){};
\node (Output) [block_laptop,text width=4.0cm,draw, thick,fill=lime,right=of Stopping,rounded corners,xshift=1.95cm] {Output:  $\mcV_{\textrm{rq}}$, $\lgl\cdot\rgl_{r,\mcV_{\textrm{rq}}}$ and $U_{h,r}^\rho$, $U_{h,r}^g$};
\draw[arrow](Stopping.south) --node[midway,left,xshift=-0.25cm]{No} (ReducedSolve);
\draw[arrow](Input)--(Initialization);
\draw[arrow](ReducedSolve)--(Indicator);
\draw[arrow](Indicator)--(Sampling);
\draw[arrow](Sampling)--(SymmetryEnhancing);
\draw[arrow](-0.75,-5.4)--(-0.75,-6.5);
\draw [arrow](UpdateBasis) -- ++(-5.95,0.0)|-(Stopping);
\draw [arrow](Stopping) --node[midway,above]{Yes} (Output);
\draw [arrow](Initialization)--(Stopping);
\node(OnlineMoments)[block_laptop,fill=pink,below=of Output,text width=4.0cm,yshift=-2.25cm]{Solve ROM($\mcV_{\textrm{rq}};U_{h,r}^\rho,U_{h,r}^g$) to compute moments};
\node(OnlinePrediction)[rectangle,draw,thick,fill=pink,below=of OnlineMoments,text width=4.0cm,yshift=0.5cm]{Predict $f$ for unseen $\bv$};
\node[draw,dashed,inner xsep=1.5mm,inner ysep=2.0mm,fit=(OnlineMoments)(OnlinePrediction),label={[xshift=-0.1cm]:\textbf{Online stage}}](OnlineStage){};
\draw [arrow] (Output) -- (OnlineStage);

\end{tikzpicture}}
\end{center}
\caption{The flowchart of the proposed MMD-RBM algorithm.}
\label{fig:algorithm_flow}
\end{figure}
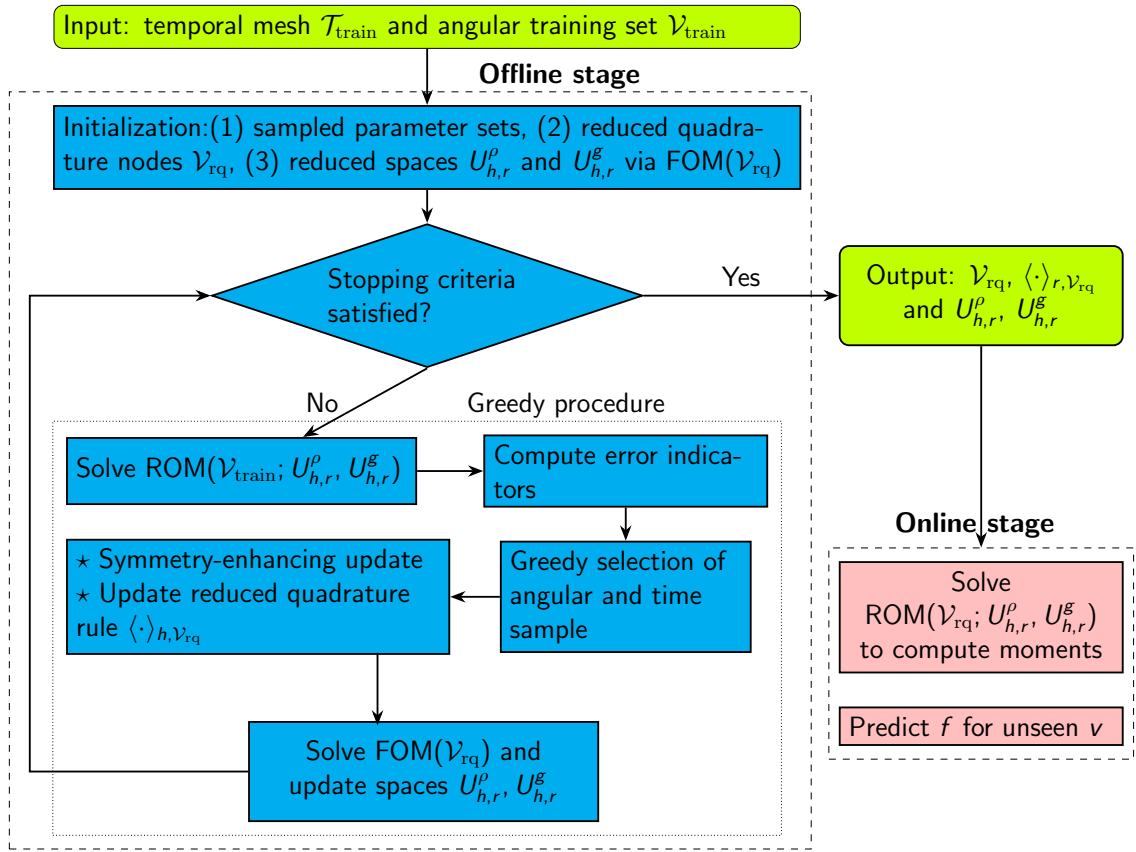

In the Online stage (the pink block of the flowchart, to be described in Section \ref{sec:online:what2do}), our ROM can be utilized to predict $f$ at angular directions outside the training set as well as some moments of $f$ with significantly fewer 
degrees of freedom.  In the Offline stage (the blue block of the flowchart, to be described in Section \ref{sec:offline}), after initializing the quadrature nodes of the reduced quadrature rule  $\mcV_{\textrm{rq}}$ and the set of sampled parameters $\mathcal{T}_{\textrm{rb}}^\rho$ and $\mathcal{TV}_{\textrm{rb}}^g$, we use a greedy algorithm to iteratively construct the subspace $U_{h,r}^\rho$ and $U_{h,r}^g$. The main steps are
\begin{itemize}
\item described in Section \ref{sec:offline_l1}, solving ROM($\mcV_{\textrm{train}};U_{h,r}^\rho,U_{h,r}^g)$ to identify the most under-resolved angular and temporal samples, $t^{\textrm{new}}_\rho$ for $\rho$ and $(t^{\textrm{new}}_g,\bv^{\textrm{new}}_g)$ pair for $g$, based on an importance indicator. Updating the set of sampled parameters $\mathcal{T}_{\textrm{rb}}^\rho$ with $t^{\textrm{new}}_\rho$ and $\mathcal{TV}_{\textrm{rb}}^g$, in a symmetry-enhancing fashion, with $(t^{\textrm{new}}_g,\pm\bv^{\textrm{new}}_g)$. 
\item described in Section \ref{sec:offline_redquad}, updating the corresponding reduced quadrature rule $\lgl\cdot\rgl_{h,\mcV_{\textrm{rq}}}$ preserving weight positivity via a novel least squares strategy. 
\item described in Section \ref{sec:offline_basis_update}, updating the RB spaces $(U_{h,r}^\rho,U_{h,r}^g)$.
\end{itemize}

\subsection{Reduced MMD formulation and online
functionalities}
\label{sec:online:what2do}

{\bf Reduced MMD formulation ROM($\mcV; U^\rho_{h,r}, U^g_{h,r}$).} We present the reduced MMD  formulation in its matrix-vector form. Toward this end, we assume that $B_\rho\in\mathbb{R}^{N_{\bx}\times r_\rho}$ and $B_g\in\mathbb{R}^{N_{\bx}\times r_g}$ contain the orthonormal basis of $U^{\rho}_{h,r}$ and $U^{g}_{h,r}$, respectively, as their columns, and look for the reduced solution $\brho_r=B_\rho\bc_{\rho}$ for $\rho$, and $ \bg_{\bv,r}=B_g\bc_{g_{\bv}}$ for $g$ at $\bv\in\mcV$.  More specifically: given $\bc_\rho^n\in\mathbb{R}^{r_\rho}$ and $\bc_{g_{\bv}}^n\in\mathbb{R}^{r_g}\;\forall \bv\in \mcV$,  we seek  $\bc_\rho^{n+1}\in\mathbb{R}^{r_\rho}$ and $\bc_{g_{\bv}}^{n+1}\in\mathbb{R}^{r_g}\;\forall \bv\in \mcV$, satisfying
\begin{subequations}
    \label{eq:ROM-mm}
    \begin{align}
    &B_\rho^TMB_\rho \frac{\bc_\rho^{n+1}-\bc_\rho^n}{\Dt}+\sum_{\bv=(v_x,v_y)\in \mcV}\omega_{\bv} B_\rho^T(v_{x}D_x^++v_{y}D_y^+)B_g \bc_{g_\gamma}^{n+1}
\notag\\
    & +B_\rho^T D^\textrm{jump}B_\rho \bc_\rho^{n+1}= -B_\rho^T\Sigma_aB_\rho \bc_\rho^{n+1}+B_\rho^T\bG^{n+1},\\
    \vareps^2&B_g^TMB_g\frac{\bc_{g_{\bv}}^{n+1}-\bc_{g_{\bv}}^n}{\Dt}+\vareps\sum_{\bnu=(\bnu_x, \bnu_y)\in\mcV}(\delta_{\bv \bnu}-\omega_{\bnu})B_g^T(D_{x,{\bnu}_x}^\textrm{up}+D_{y,{\bnu}_y}^\textrm{up})B_g \bc_{g_\gamma}^n
   \notag\\
    &+B_g^T(v_{x}D_x^-+v_{y}D_y^-)B_\rho \bc_\rho^{n+1}
    =-B_g^T(\Sigma_s+\vareps^2\Sigma_a)B_g\bc_{g_{\bv}}^{n+1}.
    \end{align}
\end{subequations}
Similar to the FOM, the Schur complement can again be applied when solving the linear
system \eqref{eq:ROM-mm}, and the resulting $r_\rho\times r_\rho$ problem is 
in the form:  $\mathcal{H}_r^\rho\bc_\rho^{n+1}=\textrm{RHS}_{r,\rho}^n.$ Here
\begin{align}
\label{eq:schur:r:MMD}  \mathcal{H}_r^\rho=&B_\rho^T(M+\Dt\Sigma_a+\Dt D^\textrm{jump})B_\rho \notag\\
  &- \Dt^2(\lgl v_x^2\rgl_{h,\mcV}  D_{r,\rho g,x}^{+}(\Theta_{r,g})^{-1}D_{r,\rho g,x}^{-}
  +\lgl v_y^2\rgl_{h,\mcV} D_{r,\rho g,y}^{+}(\Theta_{r,g})^{-1}D_{r,\rho g,y}^{-}),
\end{align}
where $D_{r,\rho g,\xi}^+=B_\rho^TD_\xi^+ B_g$ and $D_{r,\rho g,\xi}^-=B_g^TD_\xi^- B_\rho$ with $\xi=x,y$ and $\Theta_{r,g}=B_g^T(\vareps^2M+\Dt\Sigma_s+\vareps^2\Dt\Sigma_a)B_g$, therefore  $\mathcal{H}_r^\rho$ is symmetric positive definite, just like its FOM counterpart.

\medskip
\noindent {\bf Online functionalities.} 
This reduced MMD formulation is iteratively called in the Offline training stage, as to be seen in 
Section \ref{sec:offline}. At each iteration, the spaces $U^{\rho}_{h,r}$ and $U^{g}_{h,r}$ are augmented and the reduced quadrature rule $\lgl\cdot\rgl_{h,\mcV_{\textrm{rq}}}$ is updated. At the end of this process with the terminal surrogate spaces $U^{\rho}_{h,r}$ and $U^{g}_{h,r}$,  ROM($\mcV_{\textrm{rq}}; U^\rho_{h,r}, U^g_{h,r}$) can be utilized as a surrogate solver for two purposes. First, we can reconstruct the scalar flux $\rho$ and high order moments of $f$; and second, we can predict solutions $f$ for  $\bv$ unseen in the offline process. We next detail these two functionalities.


To reconstruct $\rho$ and compute the high order moments, we solve ROM($\mcV_{\textrm{rq}};U_{h,r}^\rho,U_{h,r}^g$) to compute $\bc_\rho^n$ and $\bc_g^n$. The scalar flux, the first and the second order moments are approximated as: 
\begin{subequations}
\label{eq:ROM-reconstruction}
\begin{align}
&\rho^n\approx B_\rho {\bc}_\rho^n,\\
&{\lgl f^n v_\xi\rgl = \lgl (\rho^n+\vareps g^n)v_\xi\rgl=\vareps\lgl g^n v_\xi\rgl\approx \vareps B_g\lgl v_\xi {\bc}_{g_{\bv}}^n \rgl_{h,\mcV_{\textrm{rq}}},\quad \xi=x,y,z},
\\
&{\lgl f^n v_\xi v_\eta\rgl = \lgl (\rho^n+\vareps g^n)v_\xi v_\eta\rgl=\lgl v_\xi v_\eta\rgl\rho^n+\vareps\lgl g^nv_\xi v_\eta\rgl}\notag\\
&\qquad\qquad\approx{ \lgl v_\xi v_\eta\rgl B_\rho {\bc}^n_{\rho} +\vareps B_g\lgl v_\xi v_\eta {\bc}_{g_{\bv}}^n \rgl_{h,\mcV_{\textrm{rq}}},\quad \xi,\eta=x,y,z.}
\end{align}
\end{subequations}
Moreover, higher order moments can be computed similarly by integrating, {\it using the reduced quadrature rule $\lgl\cdot\rgl_{h,\mcV_{rq}}$,} corresponding quantities involving the reduced order solutions. We note that the advantages to reconstruct $\rho$ and high order moments with ROM($\mcV_{\textrm{rq}};U_{h,r}^\rho,U_{h,r}^g$) include computation efficiency, resulting from the adoption of the reduced quadrature rule, and memory saving\footnote{For the FOM, the memory to save the time history of $\rho$ and the high order moments is 
 of $O(N_{\bx}N_{t})$. In the reduced order reconstruction, 
$O(N_{\bx}r_\rho)$ and $O(N_{\bx}r_g)$ are needed to save $B_\rho$ and $B_g$, while $O(N_tr_\rho)$ and $O(N_tr_g)$  are assigned for the time history of ${\bc}_\rho^n$ and moments of ${\bc}_{g_{\bv}}^n$ (e.g. $\lgl v_x{\bc}_{g_{\bv}}^n\rgl_{h,\mcV_{\textrm{rq}}}$). 
The total memory needed by the reduced order model to reconstruct the time history of $\rho$ is of $O(r_\rho(N_\bx+N_t))$, and that  for the $k^{\rm th}$ order moments following
\eqref{eq:ROM-reconstruction} is of $O(r_g(N_\bx+N_t))$ ($k$ odd) and $O((r_\rho+r_g)(N_\bx+N_t))$ ($k$ even) respectively. These are all significantly smaller than their FOM counterparts assuming $r_\rho, r_g \ll N_\bx \mbox{ or } N_t$.}.

When predicting $f$ for an unseen angular direction $\bv^{\textrm{un}}$,  we solve
\begin{align}
    \vareps^2&B_g^TMB_g\frac{\bc_{g_{{\bv}^{\textrm{un}}}}^{n+1}-\bc_{g_{{\bv}^{\textrm{un}}}}^n}{\Dt}+
    \vareps\left( B_g^T(D_{x,v^{\textrm{un}}_x}^\textrm{up}+D_{y,v^{\textrm{un}}_y}^\textrm{up})B_g \bc_{g_{{\bv}}^{\textrm{un}}}^n-{\bc}_{\lgl \bv\cdot\nabla_x g\rgl}^{n,\textrm{upwind}}\right)
   \notag\\
    &+B_g^T(v_{x}^{\textrm{un}}D_x^-+v_{y}^{\textrm{un}}D_y^-)B_\rho \bc_\rho^{n+1}
    =-B_g^T(\Sigma_s+\vareps^2\Sigma_a)B_g\bc_{g_{{\bv}^{\textrm{un}}}}^{n+1},
    \label{eq:predict_f}\\
\mbox{ with }& {\bc}_{\lgl \bv\cdot\nabla_x g\rgl}^{n,\textrm{upwind}}=\lgl B_g^T(D_{x,{\bnu}_x}^\textrm{up}+D_{y,{\bnu}_y}^\textrm{up})B_g \bc_{g_{\bv}}^n\rgl_{h,\mcV_{\textrm{rq}}}.\notag
\end{align}
In equation \eqref{eq:predict_f}, $c_\rho^{n+1}$ and $\bc^{n,\textrm{upwind}}_{\lgl v\cdot\nabla_x g\rgl}$ can be obtained through  pre- or on-the-fly computations by solving ROM$(\mcV_{\textrm{rq}};U_{h,r}^\rho,U_{h,r}^g)$. The angular flux $f$ for $\bv^{\textrm{un}}$ is approximated by
${\bff}_{\bv^{\textrm{un}}}^n\approx B_\rho {\bc}_\rho^n+\vareps B_g \bc_{g_{{\bv}^{\textrm{un}}}}^n$.

\subsection{Offline algorithm \label{sec:offline}}



\begin{algorithm}[!ht]
\caption{Offline algorithm}
\begin{algorithmic}[1]
\STATE{\textbf{Input:} the training parameter sets $\mathcal{T}_\textrm{train}$ and $\mathcal{V}_\textrm{train}$} 
\STATE{\textbf{Step 1 (initialization):} Initialize sampled parameter sets $\mathcal{T}_{\textrm{rb}}^\rho=\emptyset$ and $\mathcal{TV}_{\textrm{rb}}^g=\emptyset$, the reduced quadrature nodes set $\mcV_{\textrm{rq}}$, and  the reduced spaces $U_{h,r}^\rho$ and  $U_{h,r}^g$.}
\STATE{\textbf{Step 2 (greedy iteration):}\FOR{$i=1:\textrm{max number of iterations}$}
    \IF{the stopping criteria are satisfied}
        \STATE{Stop.}
    \ELSE
        \STATE{
        \begin{itemize}
            \item[](i) solve the reduced order problem 
    ROM($\mcV_{\textrm{train}};U_{h,r}^\rho,U_{h,r}^g$);
    \item[](ii) compute the values of the  $L^1$ importance indicators for $\rho$ and $g$, and greedily pick the most under-resolved time $t^{\textrm{new}}_\rho$ for $\rho$ and the most under-resolved $(t^{\textrm{new}}_g,\bv^{\textrm{new}}_g)$ pair for $g$;
    \item[](iii) update the  parameter sets  $\mathcal{T}_{\textrm{rb}}^\rho$ and $\mathcal{TV}_{\textrm{rb}}^g$ with symmetry-enhancing strategy;
    \item[](iv) update the reduced  quadrature set ${\mathcal V}_\textrm{rq}$ and the corresponding quadrature rule $\langle \cdot\rangle_{h,\mathcal{ V}_\textrm{rq}}$;
    \item[](v) perform the full order solve with the reduced quadrature rule FOM($\mcV_{\textrm{rq}}$) and update the reduced spaces $U_{h,r}^\rho$ and  $U_{h,r}^g$, and the corresponding basis matrices.
        \end{itemize}
        }
    \ENDIF
\ENDFOR}
\STATE{\textbf{Output:} a reduced order solver, determined by ${\mathcal V}_\textrm{rq}$, $\langle \cdot\rangle_{h,\mathcal{ V}_\textrm{rq}}$, and $U_{h,r}^\rho$, $U_{h,r}^g$.}
\end{algorithmic}
\label{alg:offline}
\end{algorithm}
Summarized in Algorithm \ref{alg:offline}, the Offline algorithm starts with the training sets for $t$ and  $\bv$, given as 
\begin{equation*}
    \mathcal{T}_\textrm{train}=\{t^n,0\leq n\leq N_{t}\},\quad
    \mcV_\textrm{train}=\{\bv_j: 1\leq j\leq N_v\},
\end{equation*}
with some prescribed cardinalities $N_t$ and $N_v$. In preparing for the greedy iteration, we initialize the sampled parameter sets,  ${\mathcal T}_{\textrm{rb}}^\rho\subset\mathcal{T}_\textrm{train}$ and ${\mathcal{TV}}_{\textrm{rb}}^g\subset \mathcal{T}_\textrm{train}\otimes \mcV_\textrm{train}$, as empty.  We use a low order Lebdev quadrature rule (i.e. nodes and weights) to initialize the set of reduced quadrature nodes ${\mathcal V}_{\textrm{rq}}$ and the associated quadrature rule $\langle \cdot\rangle_{h,\mathcal{ V}_\textrm{rq}}$.  Given ${\mathcal V}_{\textrm{rq}}$, we call the full order solver FOM(${\mathcal V}_{\textrm{rq}}$) with the integral replaced by $\langle \cdot\rangle_{h,\mathcal{ V}_\textrm{rq}}$, and obtain the numerical solution $\{\brho^n, \bg^n_{\bv}: 1\leq n\leq N_t,  \forall \bv\in {\mathcal V}_{\textrm{rq}}\}$ which allows us to initiate the reduced spaces and the corresponding snapshot matrices
\begin{equation*}
U_{h,r}^{\rho}=\textrm{span}\{\brho^{N_t}\},\quad U_{h,r}^g= \textrm{span}\{\bg^{N_t}_{\bv}, \; \bv\in {\mathcal V}_{\textrm{rq}}\},
\end{equation*}
\begin{equation*}
    S_\rho = [{\brho}^{N_t}]\in\mathbb{R}^{N_{\bx}\times 1},\quad S_g=[\bg_{\bv}^{N_t}]_{\bv\in\mcV_{\textrm{rq}}}\in\mathbb{R}^{N_\bx\times |\mcV_{\textrm{rq}}|}.
\end{equation*}
{The initial basis matrix $B_\eta$ is obtained by orthonormalizing the columns of $S_\eta$ for $\eta=\rho,g$.}
We are now ready for details of the greedy iteration, with its main components presented below according to the order summarized at the end of Section \ref{sec:rb:alg}. 

\subsubsection{L1 importance indicator and symmetry-enhancing parameter selection}
\label{sec:offline_l1}
At every greedy step, the most under-resolved 
parameter values for $\rho$ and $g$ (were the current reduced spaces to be adopted) will be  determined by the $L^1$ importance indicator \cite{chen2021l1,chen2021eim}. Indeed, given the reduced order space $U_{h,r}^\eta$ ($\eta=\rho,g$), its snapshot and orthonormal matrices $S_\eta$ and $B_\eta$, together with the sampled parameter set $\mathcal{T}_{\textrm{rb}}^\rho\subset\mathcal{T}_\textrm{train}$ and $\mathcal{TV}_{\textrm{rb}}^g\subset\mathcal{T}_{\textrm{train}}\otimes\mathcal{V}_{\textrm{train}}$, we invoke ROM($\mcV_\textrm{train}; U^\rho_{h,r}, U^g_{h,r}$) to obtain the reduced order solution $\{(\brho^n_r, \bg_{\bv,r}^n):\;  \forall n=1,\dots N_t,
\forall\bv\in\mcV_\textrm{train}\}.$
They are expanded under the two basis systems as
\begin{equation}
\left\{\left(\brho^n_r=B_\rho \bc_\rho^n=S_\rho \tilde{\bc}_\rho^n, \; \bg_{\bv,r}^n=B_g \bc_{g_{\bv}}^n=S_g \tilde{\bc}_{g_{\bv}}^n\right): \forall n=1,\dots N_t,
\forall\bv\in\mcV_\textrm{train}\right\}.
\label{eq:l1_representation}
\end{equation}
The $L^1$ importance indicator is defined as:
\[
    \Delta_\rho^n =||\widetilde{\bc}_\rho^n||_{1}, \quad 
    \Delta_{g_{\bv}}^n =||\widetilde{\bc}_{g_{\bv}}^n||_{1}.
\]
Here $||\cdot||_1$ represents the $\ell^1$-norm. As shown in \cite{chen2021eim}, $\widetilde{\bc}_\rho^n$ (resp. $\widetilde{\bc}_{g_{\bv}}^n$) represents a Lagrange interpolation basis in the parameter induced solution space $\{\brho_{r}^n: 1\leq n\leq N_t\}$ 
(resp.  $\{\bg_{\bv,r}^n: 1\leq n\leq N_t, \bv\in \mcV_{\textrm{train}}\}$), implying that  the indicator $\Delta_\rho^n$ (resp. $\Delta_{g_{\bv}}^n$) represents the corresponding the Lebesgue constant. The following strategy to select the parameter sample then amounts to controlling the growth of the Lebesgue constants and hence is key toward accurate interpolation.
\begin{subequations}
    \begin{align}
    &t_\rho^{\textrm{new}}=\textrm{argmax}_{t^n\in\mathcal{T}_{\textrm{train}}\setminus\mathcal{T}_{\textrm{rb}}^\rho}\Delta_{\rho}^n,\label{eq:greedy_rho}\notag\\
    &(t_g^\textrm{new},\bv_g^{\textrm{new}})=\textrm{argmax}_{(t^n,\bv)\in\mathcal{T}_\textrm{train}\otimes\mcV_{\textrm{train}}\setminus\mathcal{TV}_{\textrm{rb}}^g} \Delta_{g_{\bv}}^n.\notag
\end{align}
\end{subequations}

Once these greedy picks are determined, the parameter sample sets will be updated
\begin{equation*}
\mathcal{T}_{\textrm{rb}}^\rho \leftarrow\{t^{\textrm{new}}_\rho\}\bigcup\mathcal{T}_{\textrm{rb}}^\rho, \quad \mathcal{TV}_{\textrm{rb}}^g \leftarrow \left\{(t_g^\textrm{new},\bv_g^\textrm{new}),(t_g^\textrm{new},-\bv_g^\textrm{new}) \right\}\bigcup\mathcal{TV}_{\textrm{rb}}^g.
\end{equation*}
Similar to the steady state problem \cite{peng2022reduced}, a symmetry enhancing strategy is applied when updating $\mathcal{TV}_{\textrm{rb}}^g$ by adding both $\bv_g^\textrm{new}$ and its opposite angular direction $-\bv_g^\textrm{new}$. This strategy improves the robustness and accuracy of the reduced quadrature rule, especially in the early stage of the greedy algorithm. 

\begin{rem}
The main advantage of the $L^1$ importance indicator is that it is residual free and can be computed fast (also see  \eqref{eq:compute_l1_indicator}). One can alternatively use the residual as an error estimator. However, the RTE is a multiscale transport system and the residual of its numerical method
is not a sharp error estimator. Sharper error estimators can be constructed for transport problems by solving the adjoint problems \cite{hartmann2003adaptive}, and this requires extra cost and will not be pursued in this paper.
\end{rem}

\subsubsection{Reduced quadrature rule construction}

\label{sec:offline_redquad}

When $\bv_g^{\textrm{new}}\notin \mcV_{\textrm{rq}}$, we update the set of the reduced quadrature nodes as
\begin{equation*}
    \mcV_{\textrm{rq}}\leftarrow \{\bv_g^{\textrm{new}},-\bv_g^{\textrm{new}}\}\cup\mcV_{\textrm{rq}}.
\end{equation*} 
Though with some symmetry built-in at each step, the angular samples in $\mcV_{\textrm{rq}}$ that are greedily picked offline are in general unstructured. 
A stable and accurate numerical quadrature rule associated with these samples, although important to the robustness and accuracy of the proposed reduced order solver,  may not naturally exist.  
To fill this void, we design a least squares strategy to construct a reduced quadrature rule, similar to that for mesh-free numerical methods \cite{fornberg2014spherical} and further propose an algorithm capable of preserving weight positivity.

\begin{thm}
Given an integrable function $f(\bv): \mathbb{S}^2 \rightarrow {\mathbb R}$ and a positive integer $M$, let  $Y_{m,l}$ be the real-valued spherical harmonic function of degree $m$ and order $l$ with $0\le m \le M$ and $-m \le l \le m$. On a (possibly unstructured) grid $\mcV_{\textrm{rq}}$ with cardinality $N_v^{\textrm{rq}}$ and nodes having spherical coordinates $\{(\theta_k,\phi_k)\}_{k=1}^{N_v^{\textrm{rq}}}$, the following {\it reduced quadrature rule}  
\begin{equation}
\langle f\rangle_{h,\mcV_{\textrm{rq}}}=\sum_{k=1}^{N_v^{\textrm{rq}}}\omega_k f(\bv(\theta_k,\phi_k)), \mbox{ with } \omega_{k}=\frac{1}{\sqrt{4\pi}}\mathbb{I}^\dagger_{1,k}
\label{eq:quad-weights-formula}
\end{equation} 
has a degree of exactness $M$. Here $\mathbb{I}$ is a matrix of size ${N_v^{\textrm{rq}}\times (M+1)^2}$ with  
$\mathbb{I}_{ij}= Y_{ml}(\theta_i,\phi_i)$ and $j=m^2+l+m+1.$ It is assumed  $(M+1)^2\leq N_v^{\textrm{rq}}$. 
\end{thm}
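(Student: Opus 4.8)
The plan is to reduce the claimed degree of exactness to a linear-algebra statement about the matrix $\mathbb{I}$ and then solve it explicitly via a least-squares / pseudoinverse argument. First I would recall what "degree of exactness $M$" means: the rule $\langle\cdot\rangle_{h,\mcV_{\textrm{rq}}}$ must reproduce the exact spherical integral (normalized so $\langle 1\rangle = 1$, or here $\frac{1}{|\mathbb{S}^2|}\int_{\mathbb{S}^2}$) for every spherical harmonic $Y_{m,l}$ with $0\le m\le M$. Since the spherical harmonics $\{Y_{m,l}\}$ span all polynomials on $\mathbb{S}^2$ of degree $\le M$, exactness on this basis is equivalent to exactness on that whole polynomial space by linearity. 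So the goal is: find weights $\boldsymbol{\omega}=(\omega_1,\dots,\omega_{N_v^{\textrm{rq}}})^T$ such that $\sum_k \omega_k Y_{m,l}(\theta_k,\phi_k) = \frac{1}{|\mathbb{S}^2|}\int_{\mathbb{S}^2} Y_{m,l}\,d\bv$ for all admissible $(m,l)$.

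Next I would compute the right-hand side. By orthogonality of the real spherical harmonics against the constant function $Y_{0,0}\equiv \frac{1}{\sqrt{4\pi}}$ (with the standard $L^2(\mathbb{S}^2)$ normalization $\int_{\mathbb{S}^2} Y_{m,l}Y_{m',l'}\,d\bv=\delta_{mm'}\delta_{ll'}$), the integral $\int_{\mathbb{S}^2} Y_{m,l}\,d\bv$ equals $\sqrt{4\pi}$ when $(m,l)=(0,0)$ and $0$ otherwise. With the normalization convention used for $\langle\cdot\rangle$ (so that exact weights sum to one, matching $\sum\omega_j=1$ in the full scheme), the target vector on the right is the standard basis vector $\mathbf{e}_1$ up to the $\frac{1}{\sqrt{4\pi}}$ factor — precisely the $j=1$ column condition. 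In matrix form the exactness conditions read $\mathbb{I}^T\boldsymbol{\omega} = \mathbf{r}$, where $\mathbf{r}_j = \frac{1}{|\mathbb{S}^2|}\int_{\mathbb{S}^2} Y_{ml}\,d\bv$ with $j=m^2+l+m+1$, i.e. $\mathbf{r} = \frac{1}{\sqrt{4\pi}}\mathbf{e}_1$ (after fixing normalization). Then I would verify that the proposed formula $\omega_k = \frac{1}{\sqrt{4\pi}}\,\mathbb{I}^\dagger_{1,k}$, i.e. $\boldsymbol{\omega} = \frac{1}{\sqrt{4\pi}}(\mathbb{I}^\dagger)^T\mathbf{e}_1 = \frac{1}{\sqrt{4\pi}}(\mathbb{I}^\dagger)^T(\sqrt{4\pi}\,\mathbf{r}) \cdot \frac{1}{\sqrt{4\pi}}$... — more cleanly, $\boldsymbol{\omega}=(\mathbb{I}^T)^\dagger\mathbf{r}$ — satisfies $\mathbb{I}^T\boldsymbol{\omega}=\mathbf{r}$. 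This is where the hypothesis $(M+1)^2\le N_v^{\textrm{rq}}$ enters: $\mathbb{I}$ has more rows than columns, so generically $\mathbb{I}$ has full column rank $(M+1)^2$, hence $\mathbb{I}^T$ has full row rank, the system $\mathbb{I}^T\boldsymbol{\omega}=\mathbf{r}$ is consistent (underdetermined), and the minimum-norm solution is $\boldsymbol{\omega}=(\mathbb{I}^T)^\dagger\mathbf{r}=(\mathbb{I}^\dagger)^T\mathbf{r}$, which indeed satisfies $\mathbb{I}^T(\mathbb{I}^\dagger)^T\mathbf{r}=\mathbf{r}$ because $\mathbb{I}^T(\mathbb{I}^T)^\dagger$ is the orthogonal projector onto $\mathrm{range}(\mathbb{I}^T)\ni\mathbf{r}$. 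Componentwise, $\omega_k=\sum_j(\mathbb{I}^\dagger)_{jk}\,\mathbf{r}_j=(\mathbb{I}^\dagger)_{1k}\,\mathbf{r}_1=\frac{1}{\sqrt{4\pi}}\mathbb{I}^\dagger_{1,k}$ since only $\mathbf{r}_1\ne 0$, matching \eqref{eq:quad-weights-formula}.

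The routine remaining check is the Moore–Penrose identity $\mathbb{I}^T(\mathbb{I}^\dagger)^T = (\mathbb{I}^\dagger\mathbb{I})^T = \mathbb{I}^\dagger\mathbb{I}$ (using that $\mathbb{I}^\dagger\mathbb{I}$ is symmetric) and that $\mathbb{I}^\dagger\mathbb{I} = I_{(M+1)^2}$ when $\mathbb{I}$ has full column rank, so $\mathbb{I}^T(\mathbb{I}^\dagger)^T\mathbf{r}=\mathbf{r}$ holds for any $\mathbf{r}$ — even without worrying about which columns vanish. I would present this cleanly using the two Penrose identities $\mathbb{I}\mathbb{I}^\dagger\mathbb{I}=\mathbb{I}$ and $(\mathbb{I}^\dagger\mathbb{I})^T=\mathbb{I}^\dagger\mathbb{I}$. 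The main (really the only) obstacle is a hygiene point rather than a deep one: the statement as written does not explicitly assume $\mathbb{I}$ has full column rank, yet this is needed for $\mathbb{I}^\dagger\mathbb{I}=I$. I would either note that $(M+1)^2\le N_v^{\textrm{rq}}$ together with genericity of the node placement guarantees full rank, or — more robustly — observe that $\mathbf{r}=\frac{1}{\sqrt{4\pi}}\mathbf{e}_1$ with $\mathbf{e}_1$ the column corresponding to the constant harmonic, and since that column of $\mathbb{I}$ is $\frac{1}{\sqrt{4\pi}}(1,\dots,1)^T\ne 0$, it lies in $\mathrm{range}(\mathbb{I})$; hence $\mathbf{e}_1\in\mathrm{range}(\mathbb{I}^\dagger\mathbb{I})$ is wrong direction — better to stay with full-rank assumption. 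I would therefore add the sentence "assuming the nodes are chosen so that $\mathbb{I}$ has full column rank" (which the paper implicitly intends) and then the proof is a three-line pseudoinverse computation. Finally I would remark that this argument produces weights but says nothing about their sign — positivity is handled separately by the least-squares algorithm described in the surrounding text, consistent with the theorem only claiming exactness, not positivity.
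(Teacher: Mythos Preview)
Your argument is correct and lands on the same linear-algebra fact as the paper, but you take a different route. The paper proceeds \emph{constructively}: it defines an order-$M$ spherical-harmonic ansatz $f_{\bbeta}$, solves the least-squares fit $\bbeta_{\mathrm{LS}}=\mathbb{I}^\dagger\bff$ to the nodal data, and then \emph{defines} the quadrature as the exact integral of $f_{\bbeta_{\mathrm{LS}}}$; since only $Y_{0,0}$ has nonzero mean, this collapses to $\frac{1}{\sqrt{4\pi}}\beta_{\mathrm{LS},00}=\sum_k\frac{1}{\sqrt{4\pi}}\mathbb{I}^\dagger_{1k}\bff_k$, and exactness follows because a least-squares fit (with $\mathbb{I}$ of full column rank) recovers any degree-$\le M$ polynomial exactly. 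You instead go directly to the dual formulation: write the exactness conditions as the moment system $\mathbb{I}^T\boldsymbol{\omega}=\frac{1}{\sqrt{4\pi}}\mathbf{e}_1$ and verify that $\boldsymbol{\omega}=(\mathbb{I}^\dagger)^T\cdot\frac{1}{\sqrt{4\pi}}\mathbf{e}_1$ satisfies it via $\mathbb{I}^T(\mathbb{I}^\dagger)^T=(\mathbb{I}^\dagger\mathbb{I})^T=\mathbb{I}^\dagger\mathbb{I}=I_{(M+1)^2}$ under full column rank. The paper's framing has the advantage of explaining \emph{where} the weights come from (and connects to the mesh-free quadrature literature cited); your framing is shorter and isolates precisely what is being used from the pseudoinverse. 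Both arguments silently need $\mathbb{I}$ to have full column rank---your explicit flag that the hypothesis $(M+1)^2\le N_v^{\textrm{rq}}$ only guarantees this generically is a fair hygiene point, and the paper's proof carries the same implicit assumption.
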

\begin{proof}
We note that $\mathbb{S}^2=\{\bv=\bv(\theta,\phi)=(\sin(\theta)\cos(\phi),\sin(\theta)\sin(\phi),\cos(\theta)),\theta\in[0,\pi],\phi\in[0,2\pi]\}$ and the real-valued spherical harmonics form an orthogonal basis of $L^2(\mathbb{S}^2)$. We define the following ansatz of order $M$,
\begin{equation}
\label{eq:l:ansatz}
f_{\bbeta}(\bv(\theta,\phi))=\sum_{m=0}^{M}\sum_{l=-m}^m \beta_{m,l}Y_{m,l}(\theta,\phi),
\end{equation}
and seek a particular such function with coefficient being the solution to the least squares problem:
\[
    \bbeta_\textrm{LS}=\arg\min_{\bbeta} \sum_{i=1}^{N_v^{\textrm{rq}}}
    \left|f_{\bbeta}(\bv(\theta_i,\phi_i))-f(\bv(\theta_i,\phi_i))\right|^2 =\arg\min_{\bbeta}||\mathbb{I}\bbeta-\bff||,
    \]
where $\mathbb{I}\in\mathbb{R}^{N_v^{\textrm{rq}}\times (M+1)^2}$ and $\bff\in\mathbb{R}^{N_v^{\textrm{rq}}}$ satisfy
$\mathbb{I}_{ij}= Y_{ml}(\theta_i,\phi_i),\; \text{with}\;j=m^2+l+m+1,\quad\text{and}\quad
\bff_i = f(\bv(\theta_i,\phi_i)).$
One can easily see that 
    $\bbeta_{\textrm{LS}}=\mathbb{I}^\dagger \bff,\;$
where $\mathbb{I}^\dagger$ is the pseudo inverse of $\mathbb{I}$.
The integral $\lgl f\rgl$ is now approximated by the reduced quadrature rule $\langle f\rangle_{h,\mcV_{\textrm{rq}}}$ 
which is nothing but the exact integration of the least squares approximation
\begin{align}
\langle f\rangle_{h,\mcV_{\textrm{rq}}}=
&\frac{1}{4\pi}\int_0^\pi\int_{0}^{2\pi}f_{\bbeta_{\textrm{LS}}}(\bv(\theta,\phi))d\theta d\phi= \frac{1}{4\pi}\big(\int_{0}^\pi\int_{0}^{2\pi}\beta_{\textrm{LS},00}Y_{0,0}(\theta,\phi)d\theta d\phi\notag\\
&\quad+\sum_{m=1}^M\sum_{l=-m}^m\int_{0}^\pi\int_{0}^{2\pi}\beta_{\textrm{LS},ml}Y_{m,l}(\theta,\phi)d\theta d\phi\big)\notag\\
=&\frac{1}{\sqrt{4\pi}}\beta_{\textrm{LS},00}=\sum_{k=1}^{N_v^{\textrm{rq}}}\frac{1}{\sqrt{4\pi}}\mathbb{I}^\dagger_{1k}f(\bv(\theta_k,\phi_k)).\notag
\end{align}
From the construction above, one can see that the reduced quadrature rule is exact for polynomials (in $\bv$) up to degree $M$, hence of accuracy order $M$.
\end{proof}

We emphasize that, just like any numerical integration of interpolatory type, the weights are independent of the integrand $f$. In this work, we always assume  $M\geq 3$. As a result, $\lgl v_\xi^2\rgl$ with $\xi=x,y,z$  are computed exactly and they will appear in the diffusion limit.  Additionally $\lgl v_xv_y\rgl=\lgl v_xv_z\rgl=\lgl v_yv_z\rgl=0$ is also exactly computed, and this will ensure the absence of the  cross-derivatives of second order in the reduced order problems \eqref{eq:schur:r:MMD}, as illustrated in  \eqref{eq:schur}. 
We also note that the proposed algorithm can be easily generalized to the 1D slab geometry $\Omega_v=[-1,1]$ and the unit circle $\Omega_v=\mathbb{S}^1$ by replacing the spherical harmonic expansion in \eqref{eq:l:ansatz} with expansions of Legendre polynomials and  trigonometric  functions, respectively. 

While the construction of the reduced quadrature has spectral accuracy, it does not guarantee the associated quadrature weights to be non-negative. It is observed numerically that the reduced and full order solvers could blow up when some of quadrature weights are negative. The root of this instability is that the discrete energy $\mathcal{E}_h^n$ defined in \eqref{eq:mu_energy} can be negative in the presence of negative quadrature weights. To preserve stability, we propose a strategy, described in Algorithm \ref{alg:quadrature-nn-weights}, to generate the reduced quadrature rule with non-negative weights. The basic idea is to decrease  the order $M$, when  negative weights are present,  until either all the weights are non-negative for the first time or $M$ reaches a prescribed minimal value $M_{\min}\geq 3$. 
If taking $M=M_{\min}$ still results in negative weights, we simply use the same quadrature rule as the previous greedy iteration and set the weights associated with 
the newly added angular samples to be $0$. Recall that the initial quadrature rule is chosen as a low order Lebedev  quadrature rule with positive quadrature weights. Therefore, the proposed strategy always results in non-negative reduced quadrature weights during the  greedy iterations.

\begin{algorithm}[!ht]
\caption{Iterative procedure to construct reduced quadrature rule with non-negative weights.\label{alg:quadrature-nn-weights}}
\begin{algorithmic}[1]
\STATE{\textbf{Input}:} Current sampled angular points {$\mcV_{\textrm{rq}}=\{\bv_{k_j}\}_{j=1}^{N_{v}^\textrm{rq}}$} and the sampled angular points for the previous iteration $\mcV_{\textrm{rq}}^\textrm{old}$.
Let the reduced quadrature rule for $\mcV_{\textrm{rq}}^\textrm{old}$ be
{$\{\bv_{k_j}^{\textrm{old}},\omega^\textrm{old}_j\}_{j=1}^{N_v^{\textrm{rq,old}}}$}
with $\omega_j^\textrm{old}\geq 0,\;\forall j$, the order $M_{\min}$ and $M_{\max}$.
\STATE{Initialize the bool variable $\textrm{Failure}=true$.}
\FOR{$M=M_{\max}:-1:M_{\min}$}
\STATE{Use equation \eqref{eq:quad-weights-formula} to construct an order $M$ reduced quadrature rule $\lgl\cdot\rgl_{h,\mcV_{\textrm{rq}}}$.}
    \IF{All the quadrature weights are non-negative,} 
    \STATE{set $\textrm{Failure}=\textrm{false}$, and break.}
    \ENDIF
\ENDFOR
\IF{$\textrm{Failure}$}
\STATE{set the quadrature weight $\omega_j^\textrm{new}$ for $\bv_{k_j}\in\mcV_{\textrm{rq}}$ as}
\begin{align*}
    \omega_j^\textrm{new}=\begin{cases}
                           0, \quad &\text{if}\quad \bv_{k_j}\not\in \mcV_{\textrm{rq}}^\textrm{old},\\
                           \omega_j^\textrm{old},\quad&\text{if}\quad \bv_{k_j}\in \mcV_{\textrm{rq}}^\textrm{old}.
                          \end{cases}
\end{align*}
\ENDIF
\STATE{\textbf{Output}:} {the quadrature rule  $\{\bv_{k_j}, \omega_{j}^{\textrm{new}}\}_{j=1}^{N_{v}^{\textrm{rq}}}$ for $\mcV_{\textrm{rq}}$ with non-negative weights. }
\end{algorithmic}
\end{algorithm}


\subsubsection{Update of the reduced order spaces} 
\label{sec:offline_basis_update}

Given the sampled parameter  set $\{\mathcal{T}_{\textrm{rb}}^\rho, \mathcal{TV}_{\textrm{rb}}^g\}$, reduced quadrature nodes $\mcV_{\textrm{rq}}$ containing the $\bv-$components of $\mathcal{TV}_{\textrm{rb}}^g$, and the associated quadrature rule $\lgl \cdot\rgl_{h,\mcV_{\textrm{rq}}}$,
we augment the reduced order space $U_{h,r}^\eta$ $(\eta=\rho,g)$ and its corresponding matrices $S_\eta$ and $B_\eta$. 
Indeed, we perform FOM($\mcV_{\textrm{rq}}$) which is affordable thanks to the small size of $\mcV_{\textrm{rq}}$ to obtain the solution snapshots $\brho^n, \bg^n_{\bv}, \forall n=1,\dots, N_t, \forall \bv\in \mcV_{\textrm{rq}}$. We are then ready for the updates.

\medskip
\noindent {\bf Update $U_{h,r}^\rho$ and $B_\rho$.} This will be  done in a straightforward manner, namely
    $U^\rho_{h,r}=\textrm{span}\{\brho^m:  \;t^m \in \mathcal{T}_{\textrm{rb}}^\rho\}$. 
Correspondingly, the snapshot matrix $S_\rho$ is assembled. 
We then orthonormalize $S_\rho$ through the (reduced) singular value decomposition (SVD):
\begin{equation}
    S_\rho = B_\rho\Lambda_\rho V_\rho^T \in\mathbb{R}^{N_{\bx}\times r_\rho},\label{eq:svd_rho}
\end{equation}
where $B_\rho\in\mathbb{R}^{N_{\bx}\times r_\rho}$, $V_\rho\in\mathbb{R}^{r_\rho\times r_\rho}$,   satisfying $B_\rho^TB_\rho=V_\rho^TV_\rho= I_{r_\rho}$, and $\Lambda_\rho\in\mathbb{R}^{r_\rho\times r_\rho}$ is a diagonal matrix.  The columns of $B_\rho$ form an orthonormal basis of $U_{h,r}^\rho$. As one will see,  the singular values in $\Lambda_\rho$ can be further utilized in the stopping criteria.

\medskip
\noindent {\bf Update $U_{h,r}^g$ and $B_g$ via an equilibrium respecting strategy.}
The update of the reduced order space for $g$ is more subtle.  Particularly, we set
\begin{equation*}
    U^g_{h,r}=\textrm{span}\left\{\{\Dt\Theta^{-1}D_x^-\brho^m,\Dt\Theta^{-1}D_y^-\brho^m: \; t^m\in \mathcal{T}_{\textrm{rb}}^\rho\}\cup\{\bg_{\bv}^m:  (t^m,\bv)\in \mathcal{TV}_{\textrm{rb}}^g\}\right\}.
\end{equation*}
That is, the reduced order space for $g$  includes not only the sampled $g$-snapshots but also the scaled discrete derivatives of the sampled $\rho$-snapshots. Correspondingly, the snapshot matrix $S_\rho$ is assembled which is further orthonormalized through its own SVD 
\begin{align}
    S_g = B_g\Lambda_gV_g^T\in\mathbb{R}^{N_{\bx}\times r_g},
    \label{eq:svd_g}
\end{align}
where $B_g\in\mathbb{R}^{N_{\bx}\times r_g}$, $V_g\in\mathbb{R}^{r_g\times r_g}$, satisfying $B_g^TB_g=V_g^TV_g=I_{r_g}$. The columns of $B_g$ form an orthogonal basis of $U_{h,r}^g$. 

\medskip
\noindent \textbf{Fast computation of $L^1$ error indicator.}
Using the SVD in \eqref{eq:svd_rho} and \eqref{eq:svd_g}, one can show that $\widetilde{\bc}_\rho^n$ and $\widetilde{\bc}_{g_{\bv}}^n$ in \eqref{eq:l1_representation} satisfy
\begin{equation*}
    \widetilde{\bc}_\rho^n = V_\rho\Lambda_\rho^{-1}c_\rho^n, \quad
    \widetilde{\bc}_{g_{\bv}}^n = V_g\Lambda_g^{-1}c_{g_{\bv}}^n,
\end{equation*}
and as a result $\Delta_\rho^n$ and $\Delta_{g_{\bv}}^n$ can be computed efficiently as
\begin{equation}
\Delta_{\rho}^n =||V_\rho\Lambda_\rho^{-1}\bc_\rho^n||_1\quad\text{and}\quad
\Delta_{g_{\bv}}^n =||V_g\Lambda_g^{-1}\bc_{g_{\bv}}^n||_1.
\label{eq:compute_l1_indicator}
\end{equation}

\begin{rem}
The equilibrium respecting strategy is designed to improve the performance of our method especially in  the diffusive regime.  To see the motivation,  
note that as  
$\vareps\rightarrow0$ and with $\sigma_s>0$, we have
\begin{align*}
    \bg_{\bv}^m \rightarrow -\Sigma_s^{-1}(v_{x}D_x^-+v_{y}D_y^-)\brho^m.
\end{align*}
That is, in the diffusion limit, $\bg ^m$ is a linear combination of the scaled derivatives of $\brho^n$. In  general,  $\vareps$ is small in the diffusive regime yet nonzero, and one would want to consider the relation  in \eqref{eq:schur_step1} instead. Hence 
$\Dt\Theta^{-1}D_x^-\brho^m$ and $\Dt\Theta^{-1}D_y^-\brho^m$ are included to enrich the reduced order space for $g$. Another benefit of such enrichment over including  $\Sigma_s^{-1} D_x^-\brho^m$ and $\Sigma_s^{-1}D_y^-\brho^m$ is to be able to handle the case when $\sigma_s$ is zero in some subregion(s) and  the associated $\Sigma_s$ is singular. It is easy to see that $\lim_{\vareps\rightarrow 0}\Theta=\Dt\Sigma_s$.
\end{rem}


\begin{rem}
We orthornormalize $S_\rho$ and $S_g$ with SVD, and one can alternatively orthornormalize them with the QR decomposition.  The SVD decomposition provides singular values which can be utilized in the stopping criteria and furnishes a mechanism for efficiently computing the error indicators.
\end{rem}

\begin{rem}
We note that the dimension of $U_{h,r}^g$ resulting from the first greedy iteration will be smaller than its initial dimension. After the first greedy iteration, $U_{h,r}^g$ is determined by the sampled parameter set $\mathcal{T}_{\textrm{rb}}^\rho$ and $\mathcal{TV}_{\textrm{rb}}^g$, while the initial $U_{h,r}^g$ is not and its initial dimension is $|\mcV_{\textrm{rq}}|$. In the first greedy iteration,  $\max\{|\mathcal{T}_{\textrm{rb}}^\rho|,|\mathcal{TV}_{\textrm{rb}}^g|\}<|\mcV_{\textrm{rq}}|$ and this leads to the reduction of dimension of $U_{h,r}^g$ compared with its initialization.
\end{rem}



\subsubsection{Stopping criteria}

The $L^1$ importance indicator identifies the most under-resolved parameter sample(s), but it does not inform us the magnitude of the error. To effectively stop the Offline greedy algorithm, we design the following two-fold stopping criteria. The first criterion, based on the spectral ratio, measures how much new information is added in each greedy iteration. The second criterion, an approximate relative error at the final time, can be  computed efficiently. The Offline greedy algorithm stops when both criteria are satisfied.
\begin{enumerate}
    \item 
{\bf Spectral ratio stopping criterion:} Similar to \cite{peng2022reduced}, we use the spectral ratio as one stopping criterion measuring how much new information is gained by expanding the reduced subspaces. Suppose we are in the $m$-th greedy iteration, with all notation now having  a  superscript $m$. Let $\Lambda_\rho^m$ and  $\Lambda_g^m$ be the diagonal matrix from the SVD in \eqref{eq:svd_rho} and \eqref{eq:svd_g}, with the last diagonal entry as $\sigma^{\rho,m}_{r^m_\rho}$ and $\sigma^{g,m}_{r^m_g}$, respectively.
We define two spectral ratios:
\[
\textrm{ratio}^m_\rho = \frac{\sigma^{\rho,m}_{r^m_\rho}}{Tr(\Lambda_\rho^m)},\qquad    \textrm{ratio}^m_g = \frac{\sigma^{g,m}_{r_g^m}}{Tr(\Lambda_g^m)},
\]
and check whether $\max\{\textrm{ratio}^m_\rho,\textrm{ratio}^m_g\}<\textrm{tol}_\textrm{ratio}$ is satisfied.

The spectral ratio criterion itself does not directly estimate the error in the reduced order approximations. For that, we propose the second criterion. 

\item 
{\bf Approximate relative error at the final time with a coarse mesh in $\Omega_v$:}
Recall that in each greedy iteration, we have two sets of approximations for $\rho$ and $g(\cdot, \bv, \cdot)\; \forall\bv \in \mcV_{\textrm{rq}}$. One set, denoted as $\rho^n_{h,r}, g^n_{h,\bv, r}\;\forall\bv\in\mcV_\textrm{train}$, is obtained by calling the reduced order solve ROM($\mcV_\textrm{train}; U^\rho_{h,r}, U^g_{h,r}$) in the greedy sampling. The other set,   denoted as {$\rho^{n,\textrm{FOM}}_{h,\mcV_{\textrm{rq}}}, g^{n,\textrm{FOM}}_{h,\bv, \mcV_{\textrm{rq}}}\;\forall\bv\in\mcV_{\textrm{rq}}$}, is obtained when updating  the reduced order spaces by calling the full order solve FOM($\mcV_{\textrm{rq}}$), with a reduced quadrature rule associated with  $\mcV_{\textrm{rq}}$. Based on these approximations, we define the following to measure the relative errors at the final time $t^{N_t}$:
\begin{subequations}
\label{eq:error_approximation}
    \begin{align}
        &\textrm{Estimator}_{\rho} = \frac{||\rho_{h,r}^{N_t}-\rho_{h,\mcV_{\textrm{rq}}}^{N_t,\textrm{FOM}}||}{||\rho_{h,\mcV_{\textrm{rq}}}^{N_t,\textrm{FOM}}||}, \\
        &\textrm{Estimator}_{f} = \max_{\bv\in\mcV_{\textrm{rq}}\cap\mcV_{\textrm{train}}}\frac{||\rho_{h,r}^{N_t}+\vareps g_{h,\bv,r}^{N_t}-\rho_{h,\mcV_{\textrm{rq}}}^{N_t,\textrm{FOM}}-\vareps g_{h,\bv,\mcV_{\textrm{rq}}}^{N_t,\textrm{FOM}}||}{||\rho_{h,\mcV_{\textrm{rq}}}^{N_t,\textrm{FOM}}+\vareps g_{h,\bv,\mcV_{\textrm{rq}}}^{N_t,\textrm{FOM}}||},
    \end{align}
\end{subequations}
and  check whether $\textrm{Estimator}_\rho<\textrm{tol}_{\textrm{error},\rho}\quad\text{and}\quad\textrm{Estimator}_f<\textrm{tol}_{\textrm{error},f}$
are satisfied. 

\end{enumerate}

The reason why we still need the spectral ratio criterion is that $\mcV_{\textrm{rq}}$ is a coarse mesh in $\Omega_v$, and in the early stage of the greedy algorithm, the full order solution associated with this mesh may not be accurate enough to approximate the full order solution corresponding to the training set which has high resolution in $\Omega_v$. We also want to point out that this error approximation strategy can not be used in the greedy sampling step, as we need an error indicator for all the $v\in\mathcal{V}_{\textrm{train}}$ while the full order solution is only available for $\bv\in\mcV_{\textrm{rq}}$ which have already been
sampled. 


\subsection{Computational cost\label{sec:cost}}
Now, we summarize the computational cost of the Online and Offline stages. We will start with the computational cost of the reduced order problem ROM($\mcV; U^\rho_{h,r}, U^g_{h,r}$), which will be used both online and offline. This cost consists of two parts. 
Firstly, before time marching begins, one needs to  assemble the reduced order discrete operators such as $B_\rho^TMB_\rho$, $D_{r,\rho g,x}^\pm$ etc, and the leading order of the cost is $O(\max\{r_\rho,r_g\}^2N_{\bx})$.  Additionally, one needs to invert $\Theta_{r,g}$ and  $\mathcal{H}_r^\rho$. With  Cholesky factorization, the associated cost will be  $O(r_g^3)$ and  $O(r_\rho^3)$, respectively. Secondly, in each time step, with the precomputed Cholesky factor, the cost to solve \eqref{eq:ROM-mm} 
for  $\bc_\rho^{n+1}$ is  $O(r_\rho^2)$, and  the cost to  update $\bc_{g_{\bv}}^{n+1}$ for all $\bv\in \mcV$ based on the known $\bc_\rho^{n+1}$ is $O(\max(r_\rho, r_g)r_g |\mcV|)$.
Hence the total cost over $N_t$ time steps is
$O\big((r_\rho^2+\max(r_\rho, r_g)r_g|\mcV|)N_t\big)$ once the reduced order operators are computed prior to the time marching.

\medskip
\noindent {\bf Online Cost.} The computational cost of the Online stage comes from solving the ROM$(\mcV_{\textrm{rq}};U_{h,r}^\rho,U_{h,r}^g)$ in \eqref{eq:ROM-mm} from $t=0$ to $N_t\Dt$, and it is {$O((r_\rho^2+\max(r_\rho, r_g)r_gN_{v}^{\textrm{rq}})N_t)$}
with $N_{v}^{\textrm{rq}}=|\mcV_{\textrm{rq}}|$. 
The computational cost to predict $f$ for an unseen angular direction from $n=0$ to $N_t$ by solving  \eqref{eq:predict_f} is {$O(\max(r_\rho, r_g)r_gN_t)$.
Here we assume that the reduced order operators are available.}

\medskip
\noindent {\bf Offline Cost.}  We denote the reduced orders for $\rho$ and $g$ in the $m$-th greedy iteration as  $r^m_{\rho}$ and $r^m_{g}$, 
and the number of reduced quadrature nodes by $N_{v,m}^{\textrm{rq}}$. We let $r_m=\max(r^m_{\rho},r^m_{g})$ and $N_v^{\textrm{train}}=|\mcV_\textrm{train}|$. The cost of the $m$-th iteration of the offline
greedy procedure 
in Algorithm \ref{alg:offline} is summarized in Table \ref{tab:offline_cost}, in particular the total computational cost of the Offline stage of the $m$-th iteration is
\[
    \sum_{m=1}^{N_\textrm{iter}}  \left(O(r_m^2(N_{\bx}+N_v^{\textrm{train}}N_t))+O({
 N_{v,m}^{\textrm{rq}}}N_{\bx}N_t)\right). 
\]
\begin{table}[htbp]
  \centering
 \medskip
    \begin{tabular}{|l|c|c|c|c|c|c|c|c|c|c|c|}
    \hline
 		    & Leading order of the cost  \\ \hline
 {\bf Greedy sampling}:  & \\ 
 Assemble reduced order operators & $O(r_m^2N_{\bx})$\\ 
 {Compute Cholesky factorization of $\mathcal{H}_r^\rho$ and $\Theta_{r,g}$} & {$O(r_m^3)$} \\ {Compute ROM($\mcV_{\textrm{train}};U_{h,r}^\rho,U_{h,r}^g$)} and  error indicators & $O(r_m^2N_v^{\textrm{train}}N_t)$\\ \hline
 {\bf Update $\mcV_{\textrm{rq}}$ and $\lgl\cdot \rgl_{h,\mcV_{\textrm{rq}}}$ if necessary} & $O(N_{v,m}^{\textrm{rq}})$\\ \hline
 {\bf Update reduced order spaces and basis}: & \\
 Solve FOM($\mcV_{\textrm{rq}}$) with AMG- preconditioned CG & $O(N_{v,m}^{\textrm{rq}}N_{\bx}N_t)$ \\
 Update basis with SVD & $O(r_m^2N_{\bx})$ \\ \hline
 {\bf Check stopping criteria} & $O(r_m+N_\bx)$\\ \hline
 {\bf Total cost for the $m$-th iteration} & $O(r_m^2 (N_{\bx}+N_v^{\textrm{train}}N_t))+O(
  N_{v,m}^{\textrm{rq}} N_{\bx}N_t) $\\ \hline
\end{tabular}
\caption{The computational cost of  the $m$-th greedy iteration of the Offline algorithm.  \label{tab:offline_cost}}

\medskip
\begin{tabular}{|l|c|c|c|c|c|c|c|c|c|c|c|}
    \hline
 		    & Leading order of the cost  \\ \hline
Solving ROM($\mcV_{\textrm{rq}};U_{h,r}^\rho,U_{h,r}^g$) & $O(r^2N_v^{\textrm{rq}}N_t)$\\ \hline
 \end{tabular}
 \caption{The computational cost of the Online algorithm. \label{tab:online_cost}}
\end{table}

To estimate the overall offline cost, we assume that the final reduced orders are $r_\rho$ and $r_g$,  and let $r=\max(r_\rho,r_g)$. Given that the total number of greedy iterations $N_\textrm{iter}$ scales linearly with $r$, that $r_m$ scales linearly with $m$, and that in the worst scenario $N_{v,m}^{\textrm{rq}} (\le N_v^{\textrm{train}}) $ scales linearly with $m$, 
we conclude that 
\begin{equation}
\textrm{Offline time of MMD-RBM} = O(r^3 (N_{\bx}+N_v^{\textrm{train}}N_t))+O(r^2 {N_{\bx}}N_t) ).\label{eq:offline-cost}
\end{equation}

To put this estimate into context, we compare it with the costs of the POD and the full order model. The offline cost of the vanilla POD is dominated by computing the SVD of the snapshot matrix which is of size $N_{\bx}\times (N_tN_v^{\textrm{train}})$. That cost (of obtaining $U$ and $\Sigma$ in $U\Sigma V^T$) is $O(\max(N_{\bx},N_v^{\textrm{train}}N_t)\times(\min(N_{\bx},N_v^{\textrm{train}}N_t))^2)$ 
\cite{golub2013matrix}. Therefore, the relative offline computational time of the MMD-RBM and the vanilla POD is 
{
\begin{equation*}
     \frac{\textrm{Offline time of MMD-RBM}}{\textrm{Offline time of vanilla POD}} 
    = O\left(\frac{r^2}{N_v^{\textrm{train}}N_\star}+\frac{r^3}{N_\star^2} 
    \right),
\end{equation*}
where   $N_\star=\min(N_{\bx},N_v^{\textrm{train}}N_t)$.} Moreover, we have 
\begin{equation*}
    \frac{\textrm{Offline time of MMD-RBM}}{\textrm{Time of solving FOM}(\mcV_{\textrm{train}})} =
    O\left(\frac{r^2}{N_v^{\textrm{train}}}+\frac{{r^3}}{N_{\bx}}+\frac{r^3}{N_v^{\textrm{train}}N_t}\right).
\end{equation*}

\begin{rem}
SVD can be computed incrementally \cite{brand2002incremental}, and hence the POD can be more efficient. If the low rank of the snapshot matrix, which is determined by the tolerance in the incremental SVD, is $r$, the associated  cost will be  $O(N_{\bx}N_v^{\textrm{\normalfont train}}N_t r)$. With the same $r$, the relative offline computation between our method and the POD with the incremental SVD is
\begin{equation*}
    \frac{\textrm{\normalfont Offline time of MMD-RBM}}{\textrm{\normalfont Offline time of POD with incremental SVD}} = O\left(\frac{r}{N_v^{\textrm{\normalfont train}}}+\frac{r^2}{N_v^{\textrm{\normalfont train}}N_t}+\frac{{r^2}}{N_{\bx}}\right).
\end{equation*}
One can see that as long as 
$r\ll \min(\sqrt{N_{\bx}},N_v^{\textrm{\normalfont train}},\sqrt{N_v^{\textrm{\normalfont train}}N_t})$, the Offline stage of our method is faster than the POD method with the incremental SVD.
\end{rem}

\section{Numerical examples \label{sec:numerical}}
We demonstrate the performance of the proposed MMD-RBM through a series of numerical examples. 
Throughout this section, the angular training set $\mcV_{\textrm{train}}$ is the set of $N_v=590$ Lebedev quadrature points. We use piece-wise constant polynomials, i.e. $K=0$ in space.
When $\sigma_s$ is constant, we use the following time step to guarantee stability,
\[
    \Dt=\begin{cases}
        h, &\text{if}\quad \vareps< {0.25\sigma_sh}\\
        0.25\min( \frac{h}{\sqrt{2}},\frac{\vareps h}{{\sqrt{2}\sigma_s}}),\quad&\text{otherwise},
        \end{cases}
\]
where $h=\min(\min_{1\leq i\leq N_x}(x_{i+\half}-x_{i-\half}), \min_{1\leq i\leq N_y}(y_{i+\half}-y_{i-\half}))$.  When $\sigma_s$ is spatially dependent, we use the smallest time step size allowed by all $\sigma_s$ values. Throughout this section, vacuum boundary conditions are considered. The constants in the numerical flux \eqref{eq:discrte_g_derivatives} are taken to be $\alpha_x=1/\lgl v_x^2\rgl_h$ and $\alpha_y=1/\lgl v_y^2\rgl_h$. We measure the absolute errors and the relative errors 
of the scalar flux $\rho$ and first order moment $\lgl \bv f\rgl$ as follows, by evaluating the difference between the reduced order solution and a reference solution which is computed by the full order solver with $N_v^\textrm{test}=2072$ Lebedev points denoted collectively as $\mcV_{\textrm{test}}$,
\begin{subequations}
\label{eq:errors}
    \begin{align}
    &\mathcal{E}_\rho=\sqrt{\Dt\sum_{n=1}^{N_t}||\rho^n_{h,\textrm{ROM}}-\rho^n_{h,\textrm{FOM}}||^2},  & \mathcal{R}_\rho & = \frac{\mathcal{E}_\rho}{\sqrt{{\Dt}\sum_{n=1}^{N_t}||\rho^n_{h,\textrm{FOM}}||^2}},\\
    &\mathcal{E}_{\lgl \bv f\rgl}=\sqrt{\Dt\sum_{n=1}^{N_t}||\lgl \bv f\rgl^n_{h,\textrm{ROM}}-\lgl \bv f\rgl^n_{h,\textrm{FOM}}||^2},     & \mathcal{R}_{\lgl \bv f\rgl} & = \frac{\mathcal{E}_{\lgl \bv f\rgl}}{\sqrt{{\Dt}\sum_{n=1}^{N_t}||\lgl \bv f\rgl^n_{h,\textrm{FOM}}||^2}}. 
    \end{align}    
\end{subequations}
Here $\lVert \cdot\rVert$ denotes the $L^2$ norm which is computed as $\lVert \rho \rVert=\sqrt{\int_{\Omega_x}\rho^2d\bx}$ for the scalar function $\rho$ and $\lVert \lgl \bv f\rgl\rVert=\sqrt{\int_{\Omega_x}\lgl \bv_x f\rgl^2+\lgl \bv_y f\rgl^2d\bx}$ 
for the vector function $\lgl \bv f\rgl = (\lgl \bv_x f\rgl,\lgl \bv_x f\rgl)^T$. 
Moreover, we have $|\mcV_{\textrm{test}} \backslash \mcV_{\textrm{train}}| = 2058$.
To demonstrate the ability of our method to predict the angular fluxes at angular directions outside the training set, we solve for $\{f(\bv): \bv \in \mcV_{\textrm{test}}\}$ 
with our ROM and evaluate the worst case absolute and relative errors,
\begin{equation*}
\label{eq:errors_f}
\mathcal{E}_f=\max_{\bv}\sqrt{\Dt\sum_{n=1}^{N_t}||f^n_{h,\bv,\textrm{ROM}}-f^n_{h,\bv,\textrm{FOM}}||^2},\;
\mathcal{R}_f=\frac{\mathcal{E}_f}{\max_{\bv}\sqrt{\Dt\sum_{n=1}^{N_t}||f^n_{h,\bv,\textrm{FOM}}||^2}}.
\end{equation*}
We recall that $r_\rho$ and $r_g$ are the dimensions of the reduced order subspace for $\rho$ and $g$. $N_v^\textrm{rq}$ is the number of nodes in the reduced quadrature rule. 
Finally, we keep track of the data compression efficiency of our ROM via recording the compression ratio (C-R)
\[
\textrm{C-R} = \frac{\textrm{DOFs of ROM}(\mcV_{\textrm{rq}};U_{h,r}^\rho,U_{h,r}^g)}{\textrm{DOFs of  FOM}(\mcV_{\textrm{train}})}
=\frac{r_\rho+N_v^{\textrm{rq}}r_g}{(N_v^{\textrm{train}}+1)N_{\bx}}.
\]

All these quantities will appear in the tables of this section documenting the performance of the proposed MMD-RBM on various examples. We implement our solvers in the {\tt Julia} programming language. When comparing offline computational cost with the vanilla POD in Section \ref{sec:homo}, the code was run on Michigan State University's HPCC cluster. All the other tests were performed on a Macbook Air laptop with a M1 chip.
\subsection{Homogeneous media\label{sec:homo}}
In the first example, we consider a homogeneous media with $\sigma_s=1$ and $\sigma_a=0$ on the computational domain $[0,2]^2$, uniformly partitioned into $80\times 80$ rectangular elements. We adopt an initial condition $f(\bx,\bv,0)=0$ and a Gaussian source  $G(x)=\exp\left(-100((x-1)^2+(y-1)^2)\right)$. Different values of the Knudsen number $\vareps=1.0$ (transport regime), $\vareps=0.1$ (intermediate regime) and $\vareps=0.005$ (diffusive regime) are considered to benchmark the performance of the proposed algorithm. 
The final time is $T=0.25$ for $\vareps=1.0$ and $0.1$, and it is $T=1.5$ for $\vareps=0.005$. The reduced quadrature rule and reduced spaces are initialized  with $26$ Lebedev points. For the stopping criteria, we set $\textrm{tol}_\textrm{ratio}$ as $1\mathrm{e-4}$, $\textrm{tol}_{\textrm{error},\rho}=1.0\%$, and  $\textrm{tol}_{\textrm{error},f}=2.0\%$.

\medskip
\noindent{\bf Performance of the MMD-RBM:}
The results of the MMD-RBM are presented in Table \ref{tab:example1_mm} and Figure \ref{fig:example1}. In the top row of Figure \ref{fig:example1}, we observe that the reduced order solutions match the full order solutions well. As shown in Table \ref{tab:example1_mm}, the MMD-RBM achieves small relative errors in the scalar flux, the first order moment, and $f$ (w.r.t $\bv\in\mcV_{\textrm{test}}$). The C-R in the ROM is consistently below $0.08\%$. The reduced dimensions $r_\rho$ and $r_g$ decrease as $\vareps$ decreases showcasing our method's capability of numerically capturing the fact that the problem approaches its diffusive limit.
\begin{table}[htbp]
  \centering
 \medskip
    \begin{tabular}{|l|c|c|c|c|c|c|c|c|c|c|c|}
    \hline
 		       &$r_\rho$ & $r_g$ & $N_v^{\textrm{rq}}$ & C-R & $\mcE_\rho$&   $\mcR_\rho$& 
 		        $\mcE_{\lgl \bv f\rgl}$ & $\mcR_{\lgl \bv f\rgl}$ &
 		        $\mcE_f$  & $\mcR_f$  \\ \hline
  $\vareps=1$  &	$13$& $52$ & $48$ & 0.07\% & 1.29e-5 & 0.22\% & 1.99e-5 & 1.29\% & 1.21e-4 & 1.74\% \\ \hline	
 $\vareps=0.1$ &	$8$	& $32$ & $40$ & 0.03\% & 1.44e-5  & 0.48\% & 6.48e-6 & 1.34\% & 1.05e-4 & 3.16\%\\ \hline
 $\vareps=0.005$&	$3$	& $12$ & $32$ & 0.01\% & 7.86e-5  & 0.48\% & 1.29e-6 & 1.43\% & 7.90e-5 & 0.48\%\\ \hline
 \end{tabular}
 \caption{Dimensions of the reduced order subspaces, $r_\rho$, $r_g$, the number of reduced quadrature nodes $N_v^{\textrm{rq}}$, the testing error and the compression ratio for the homogeneous media example  with the MMD-RBM.}
      \label{tab:example1_mm}
\end{table}
\begin{figure}[]
  \begin{center} 
  \includegraphics[width=0.32\textwidth]{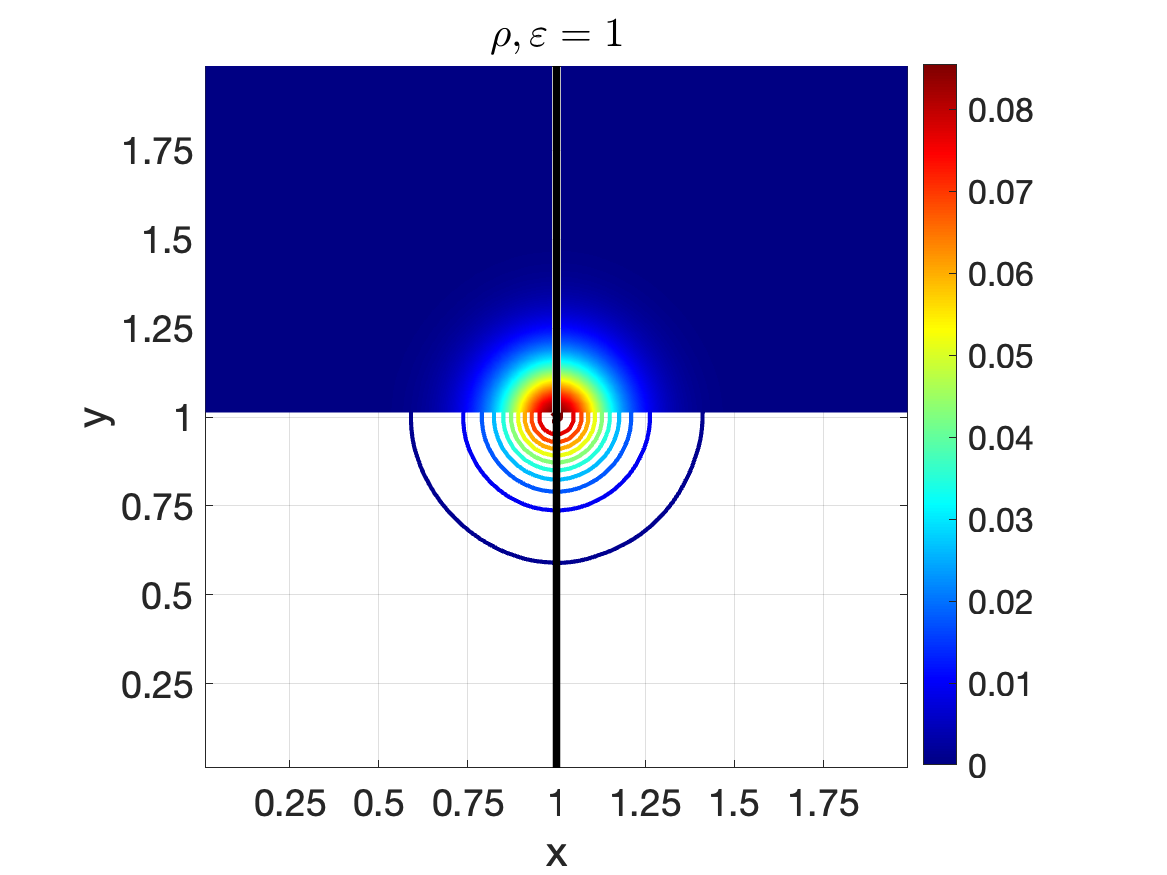}
  \includegraphics[width=0.32\textwidth]{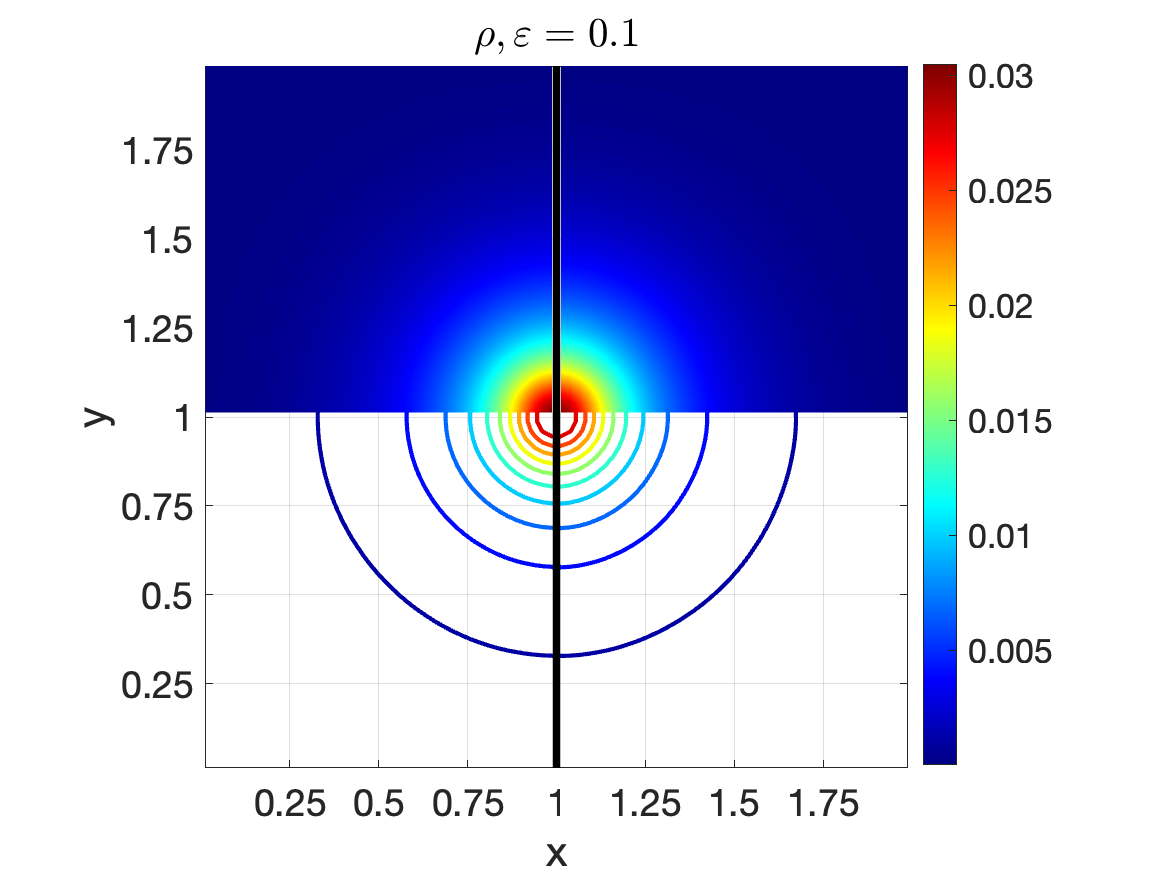}
  \includegraphics[width=0.32\textwidth]{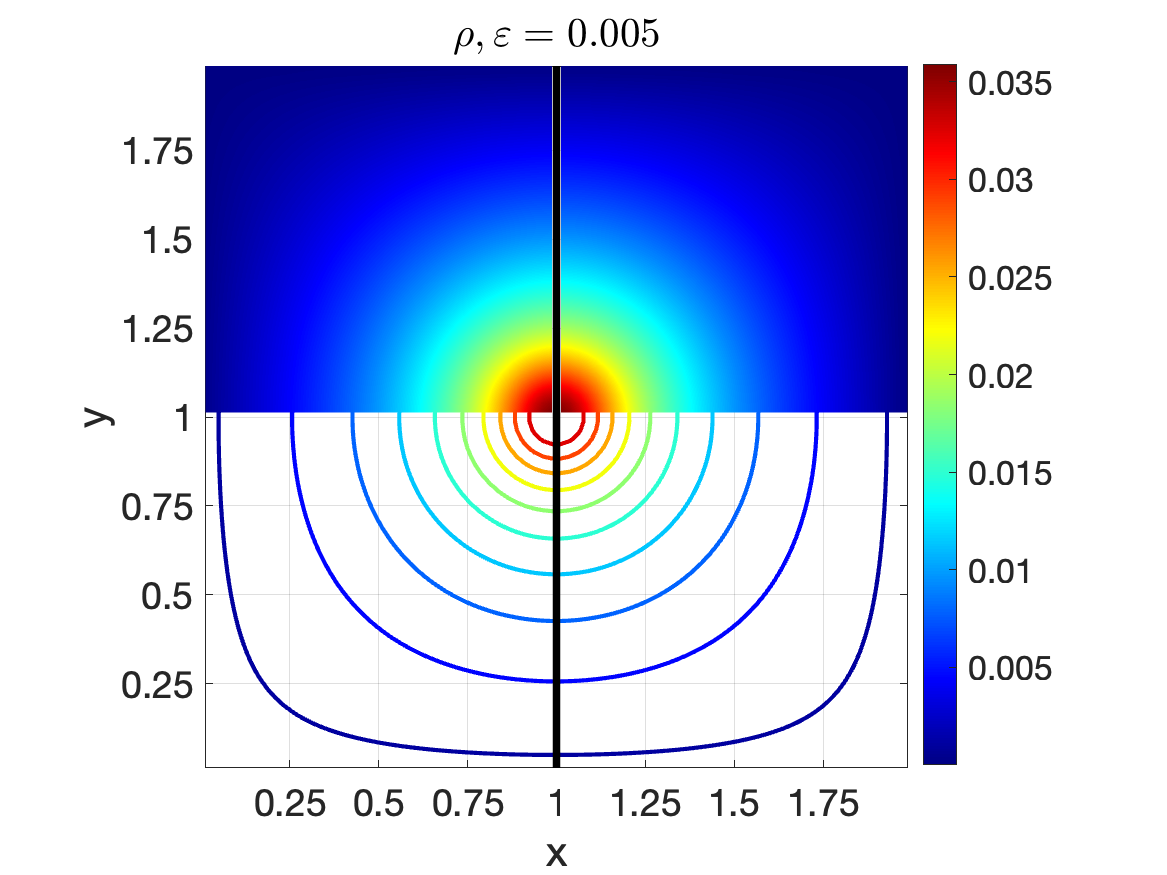}
 \includegraphics[width=0.32\textwidth]{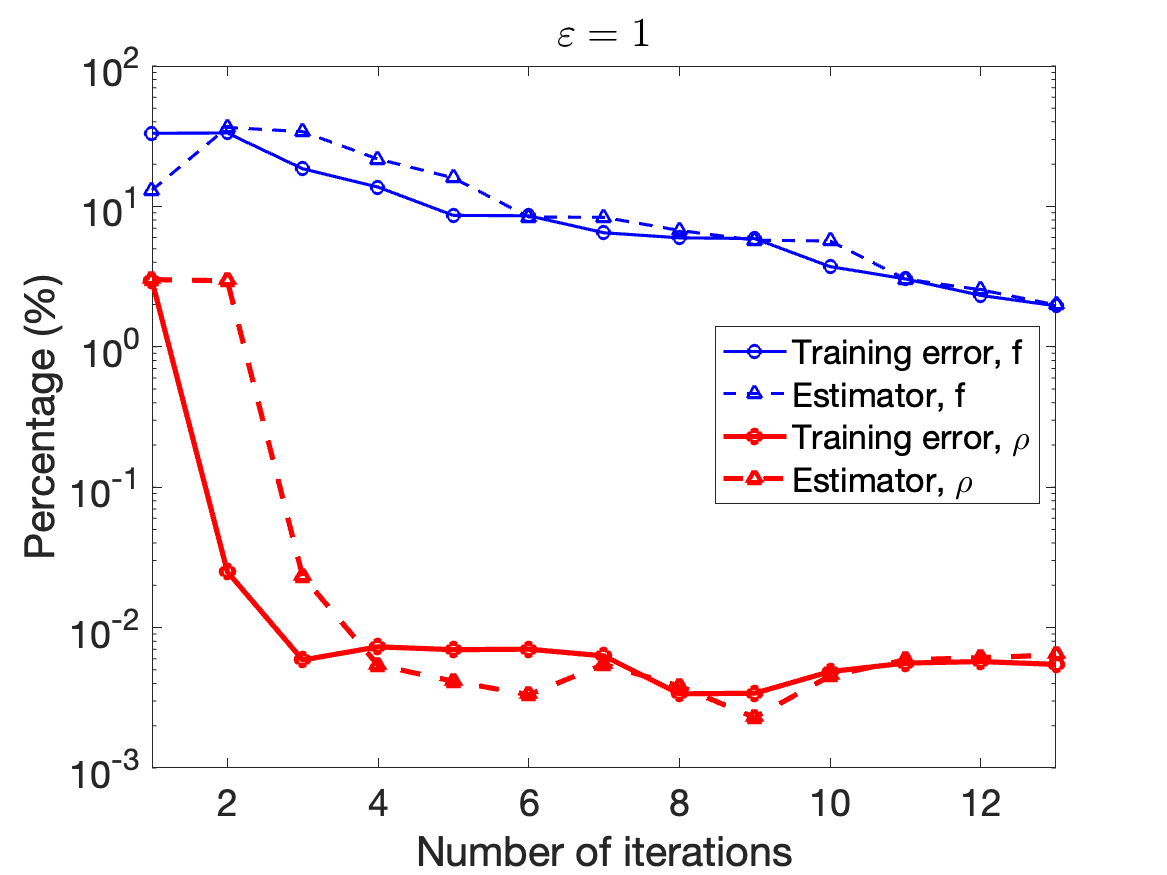}
  \includegraphics[width=0.32\textwidth]{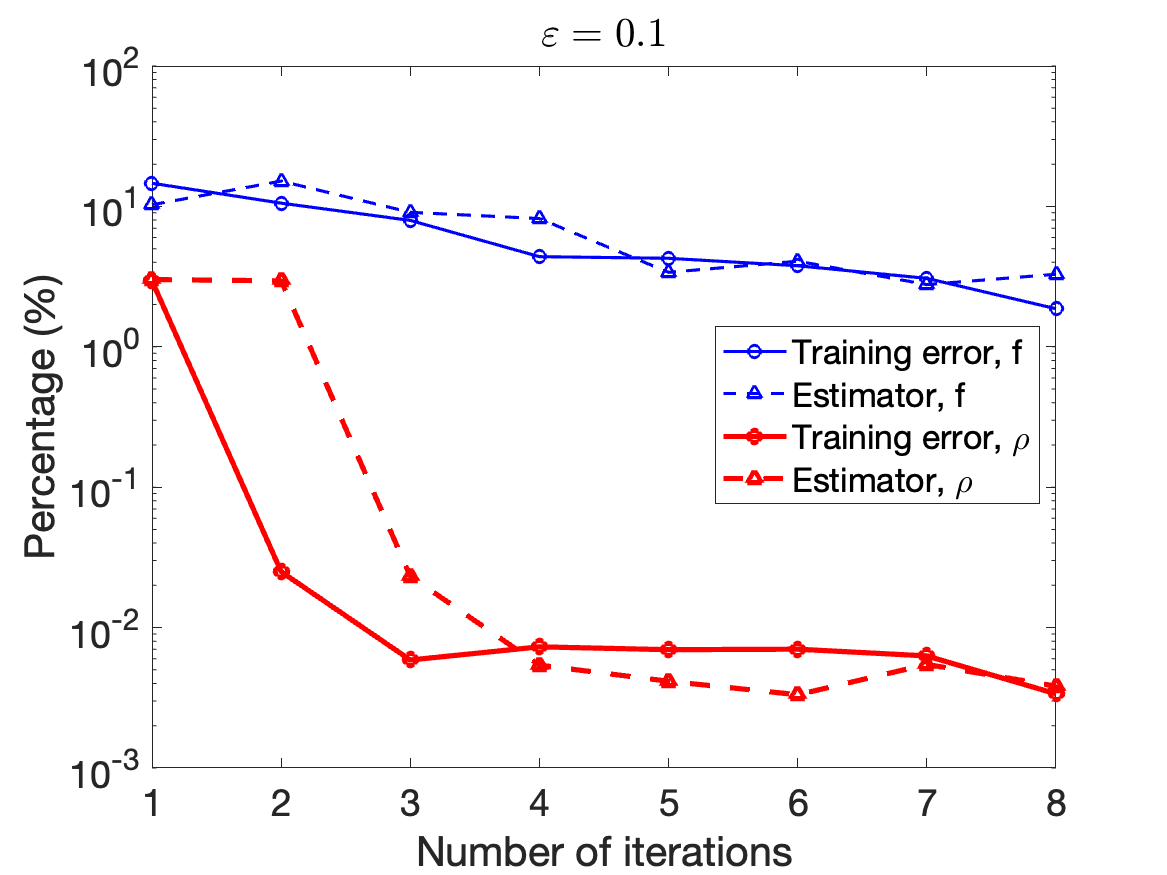}
  \includegraphics[width=0.32\textwidth]{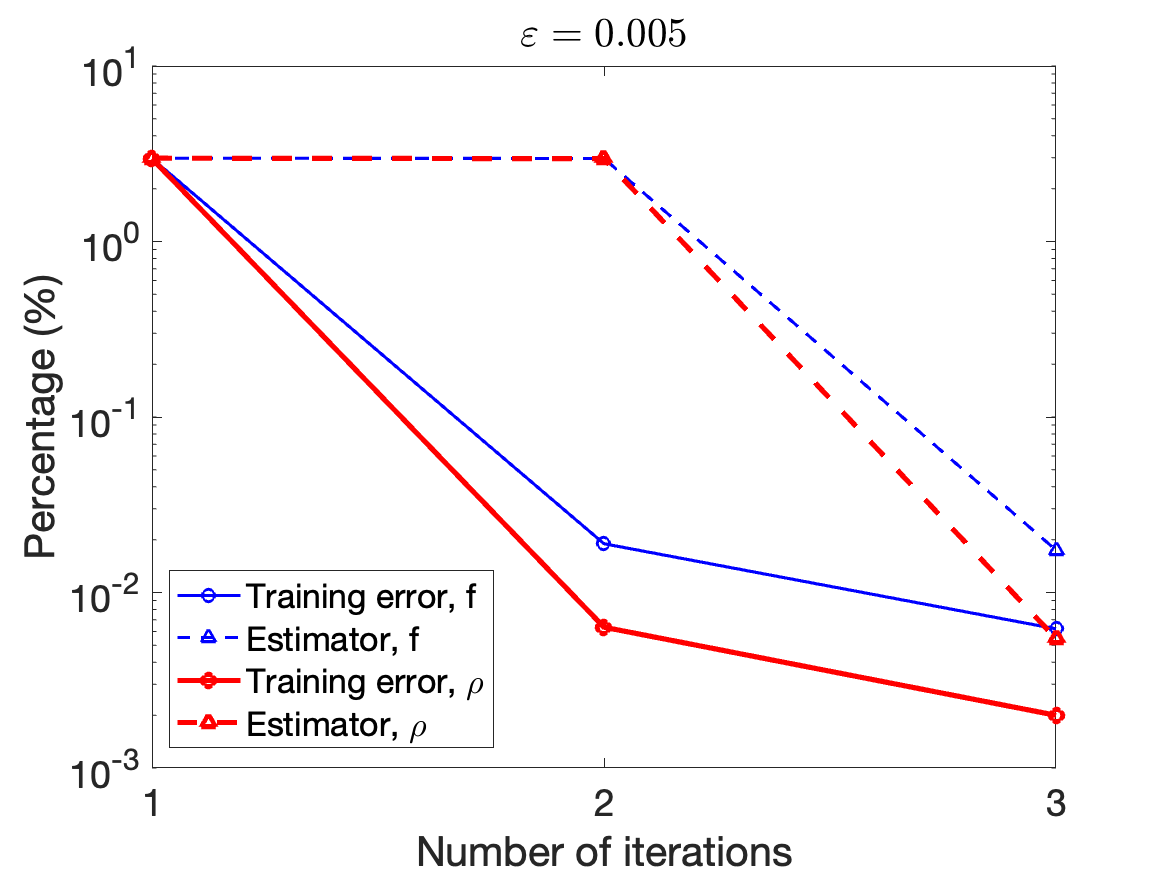}
  \includegraphics[width=0.32\textwidth]{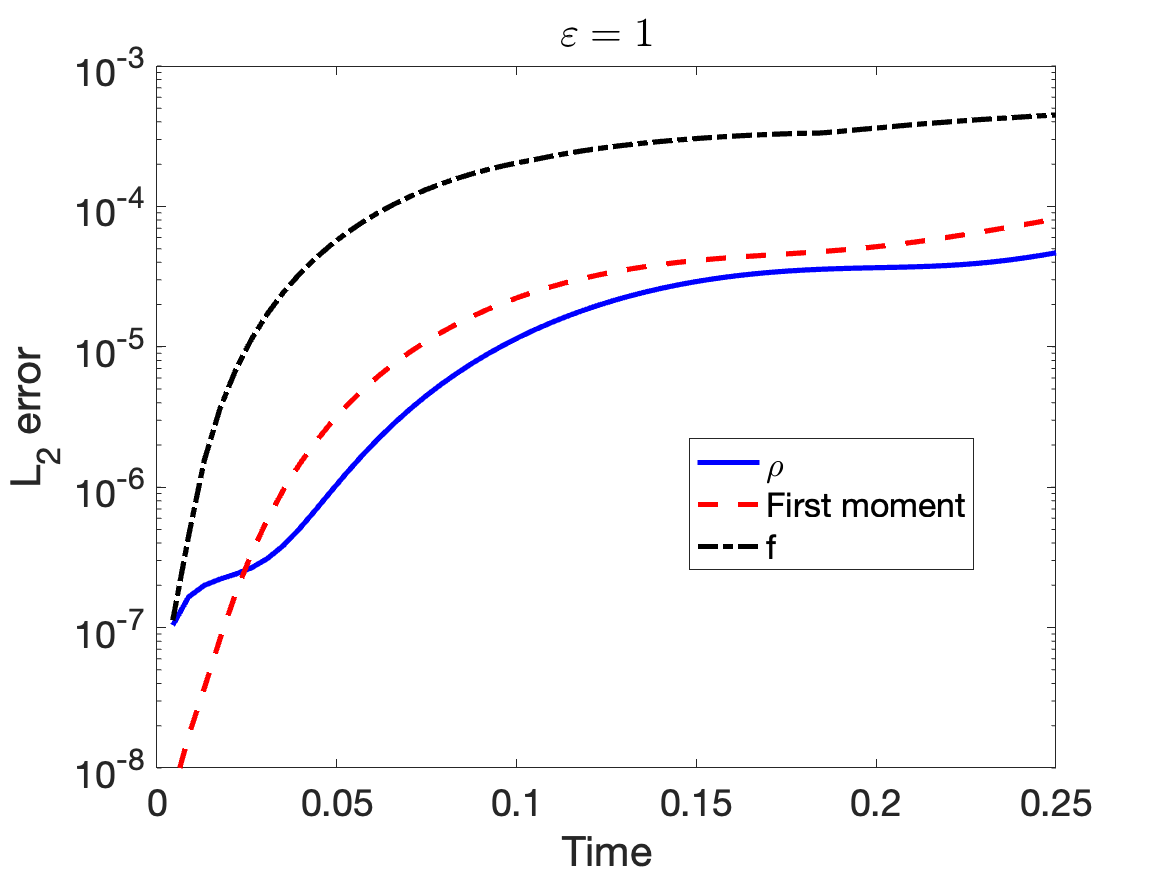}
  \includegraphics[width=0.32\textwidth]{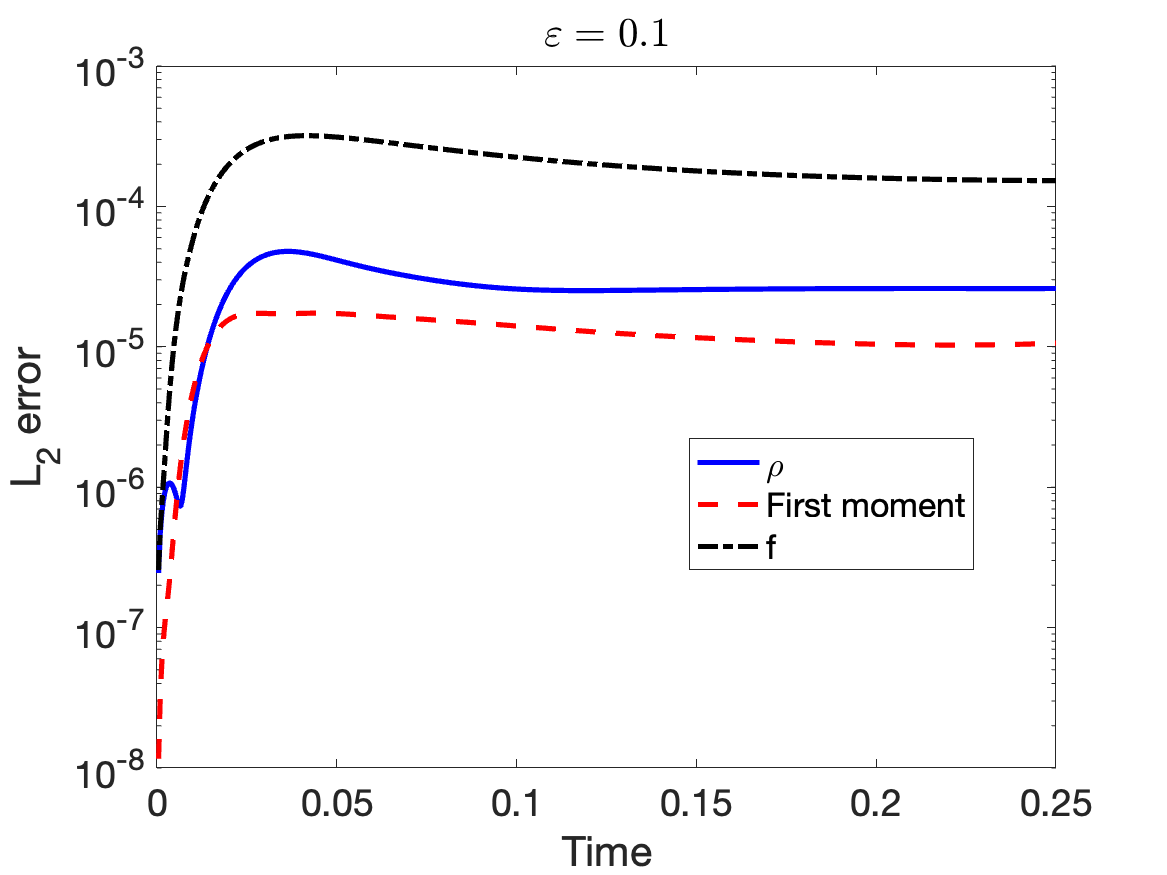}
  \includegraphics[width=0.32\textwidth]{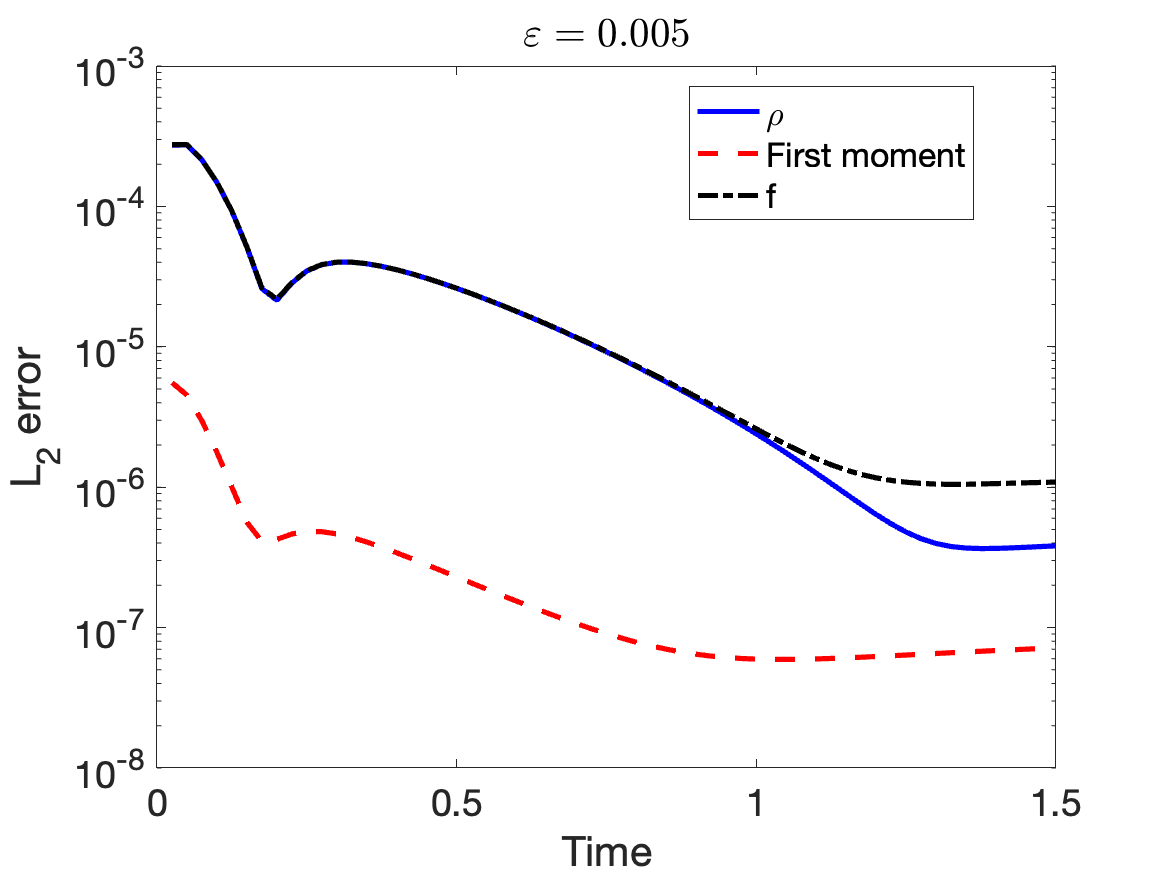}
  \caption{Results for the homogeneous media example. Shown on the top are the reduced order solutions (left) and the full order solutions (right). 
   In the middle row are the relative training errors of $\rho$ and $f$ at the final time and values of our error estimators. Shown on the bottom are the error histories with respect to time, when we compute the scalar flux $\rho$, first order moment $\lgl \bv f\rgl$ and predict $f$ at unseen angular directions $\bv\in\mcV_{\textrm{test}}$. \label{fig:example1}}
  \end{center}
\end{figure}

In the middle row of Figure \ref{fig:example1}, we present the training history of convergence. The relative training errors at the final time are defined as
\begin{equation}
   \mathcal{R}^{N_t}_\rho=||\rho^{N_t}_{h,\textrm{ROM}}-\rho^{N_t}_{h,\textrm{FOM}}||/||\rho^{N_t}_{h,\textrm{FOM}}||,\quad
    \mathcal{E}_{f}^{N_t}=\max_{v\in\mcV_{\textrm{train}}}||f_{h,\bv,\textrm{ROM}}^{N_t}-f_{h,\bv,\textrm{FOM}}^{N_t}||/||f_{h,\bv,\textrm{FOM}}^{N_t}||.
\end{equation}
 The training errors at the final time and the error estimators  in \eqref{eq:error_approximation} are plotted with respect to the number of greedy iterations. We can see that as the number of greedy iterations grows, our estimators approximate the relative training errors at the final time well. Overall, the relative training errors for $\rho$ and $f$ decrease.   In the bottom row of Figure \ref{fig:example1},
we plot the error history, as time evolves, of $\rho$, $\lgl \bv f\rgl$ and $f$ (w.r.t $\bv\in\mcV_{\textrm{test}}$). 
It is clear that, across different regimes, the errors either grow and then plateaus at the level of the prescribed error threshold, or decrease from that level. 

\begin{figure}[]
  \begin{center} 
\includegraphics[width=0.32\textwidth]{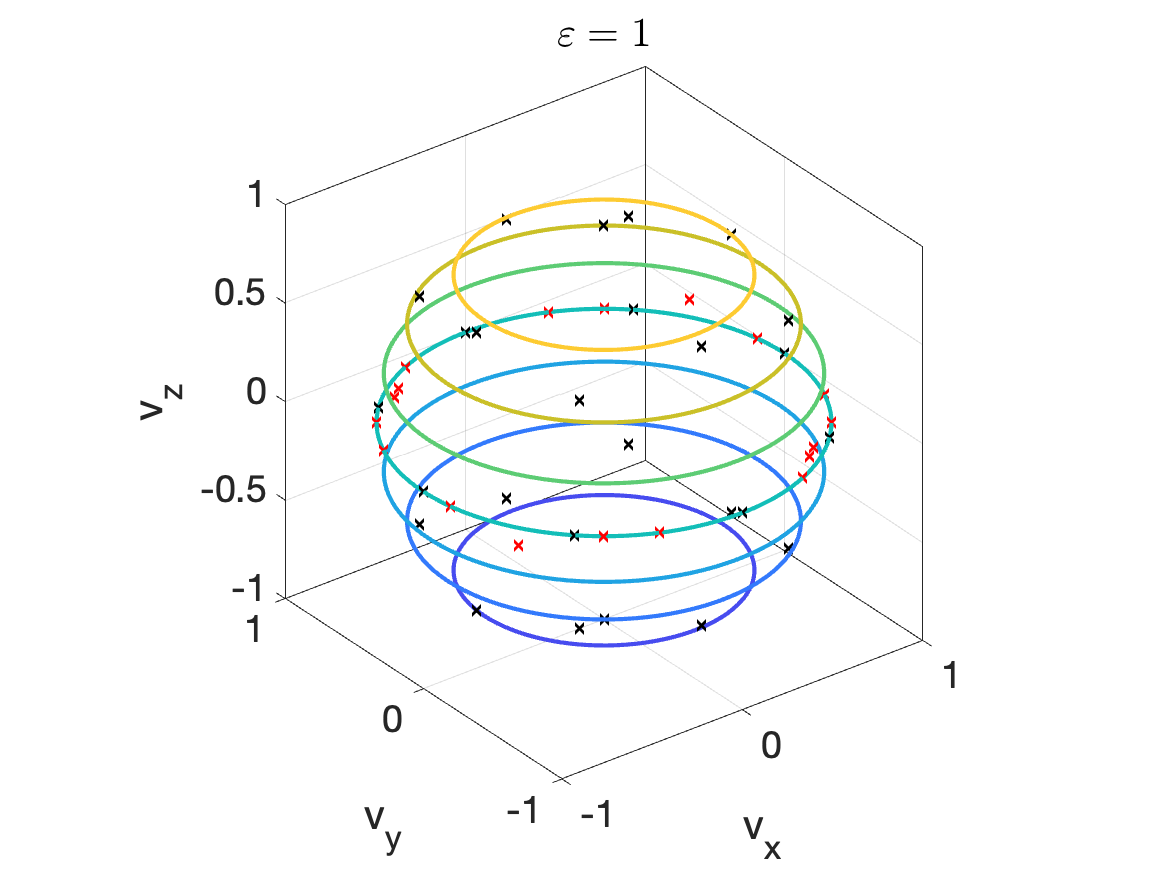}
  \includegraphics[width=0.32\textwidth]{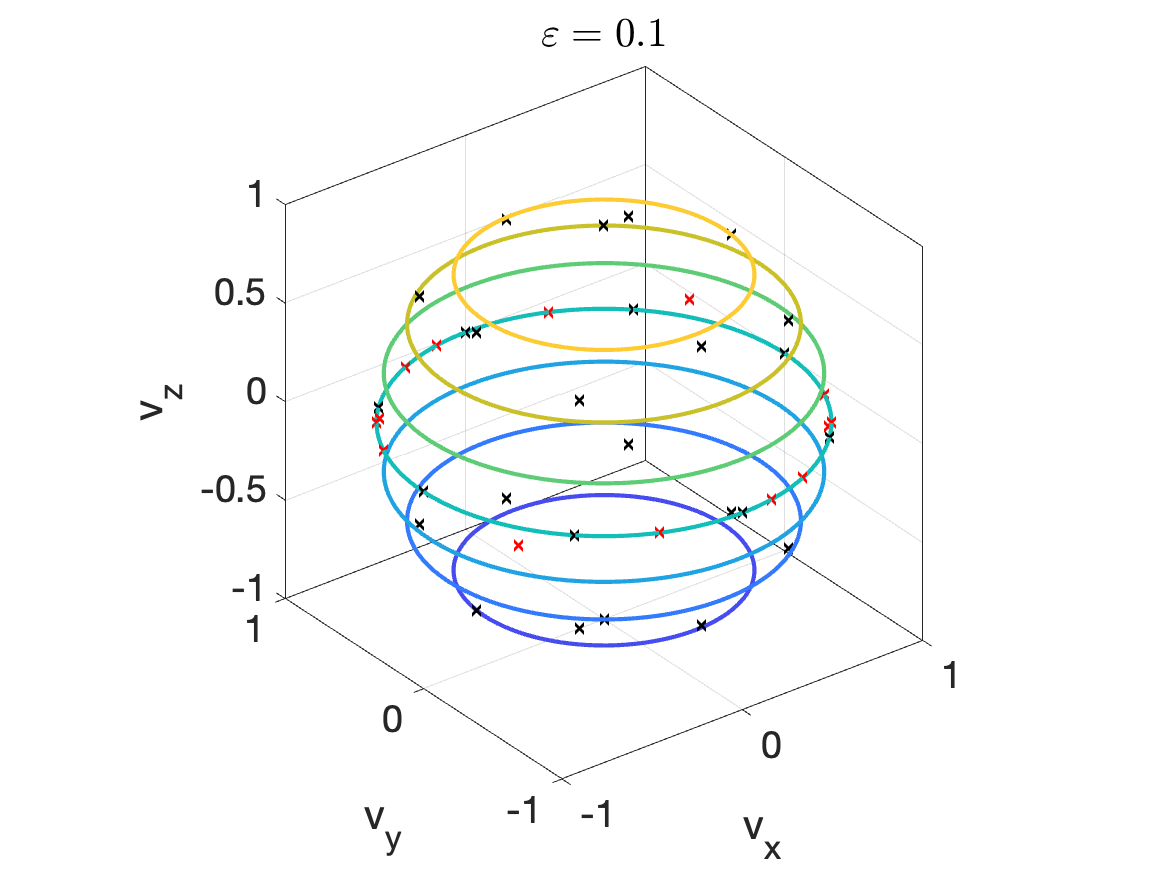}
  \includegraphics[width=0.32\textwidth]{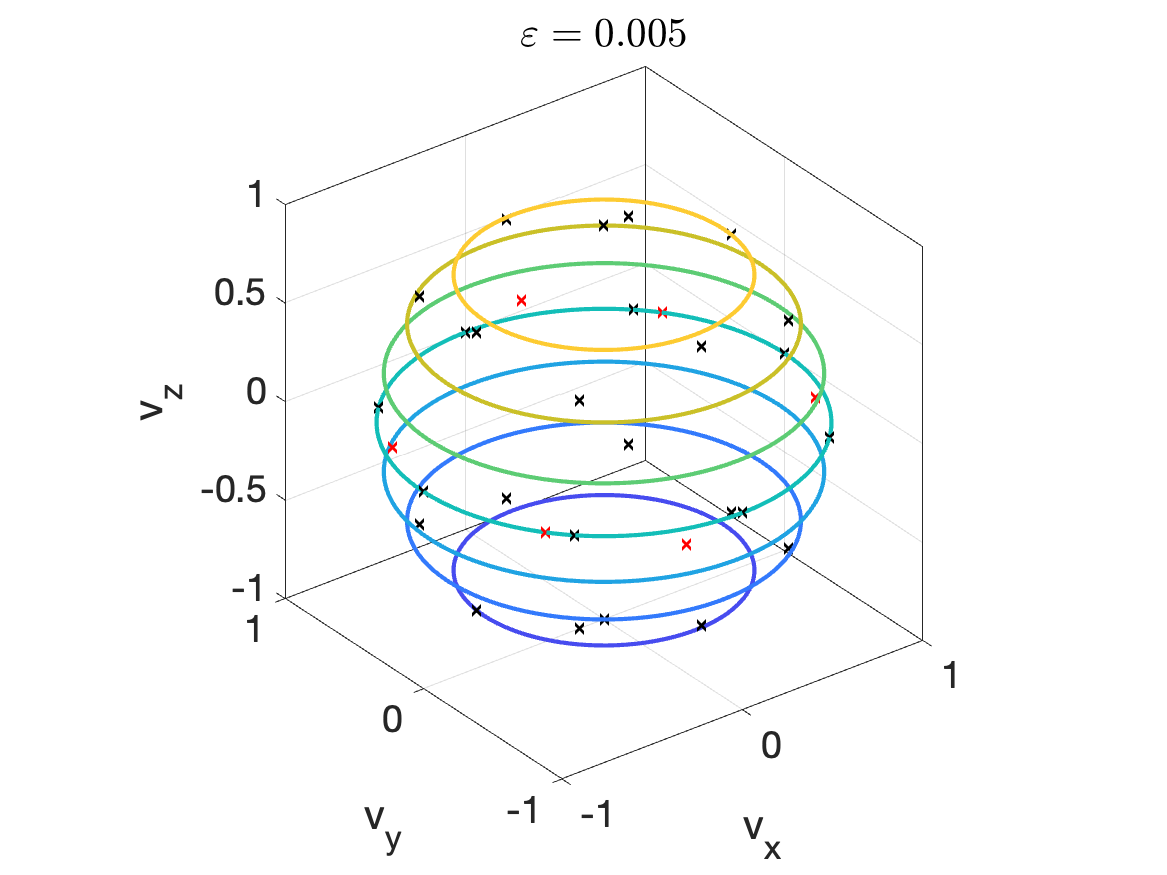}
  \includegraphics[width=0.32\textwidth]{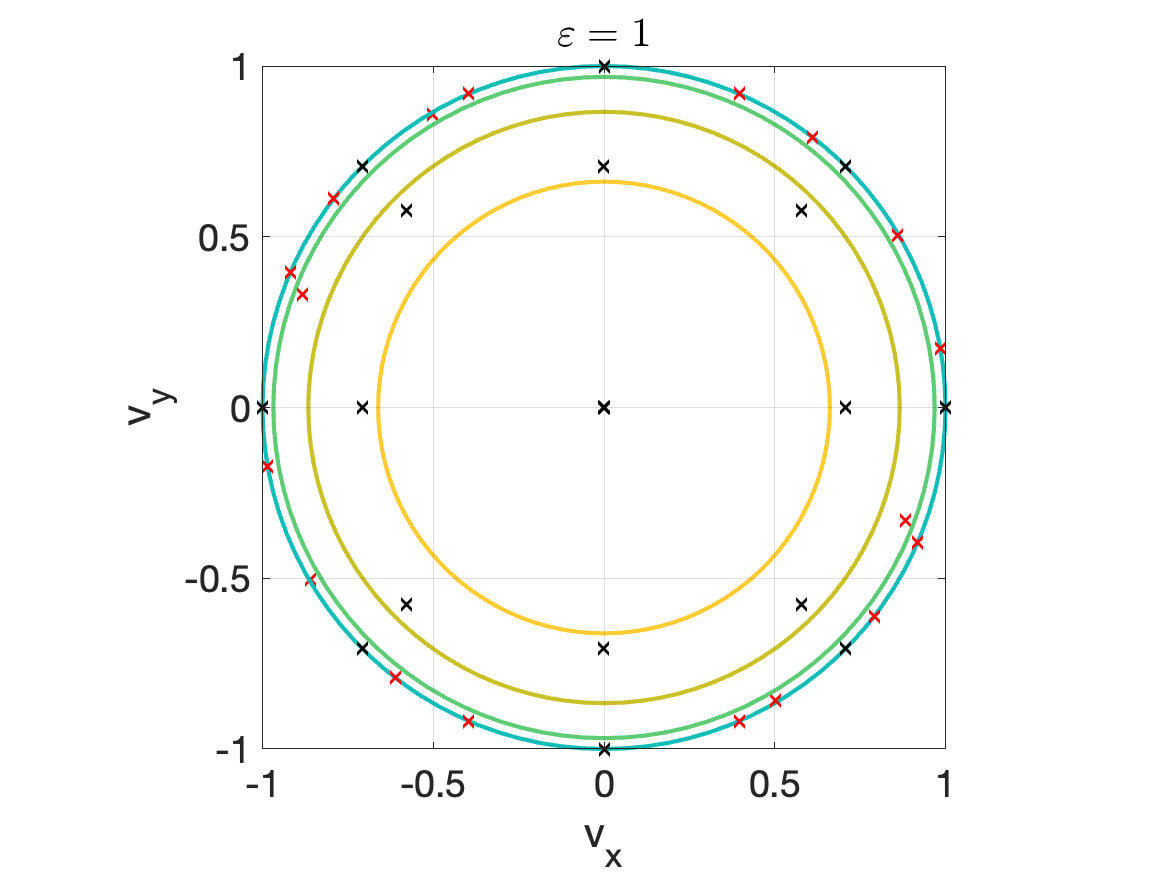}
  \includegraphics[width=0.32\textwidth]{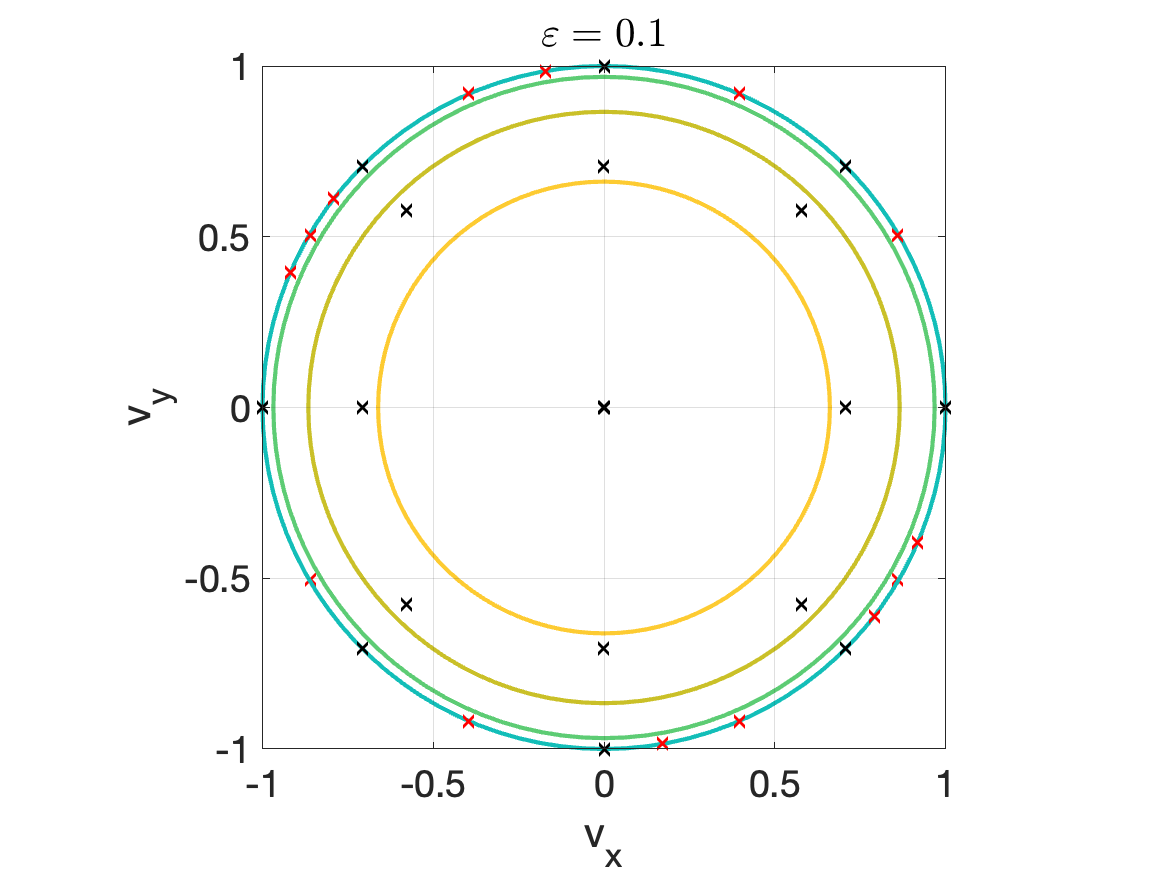}
  \includegraphics[width=0.32\textwidth]{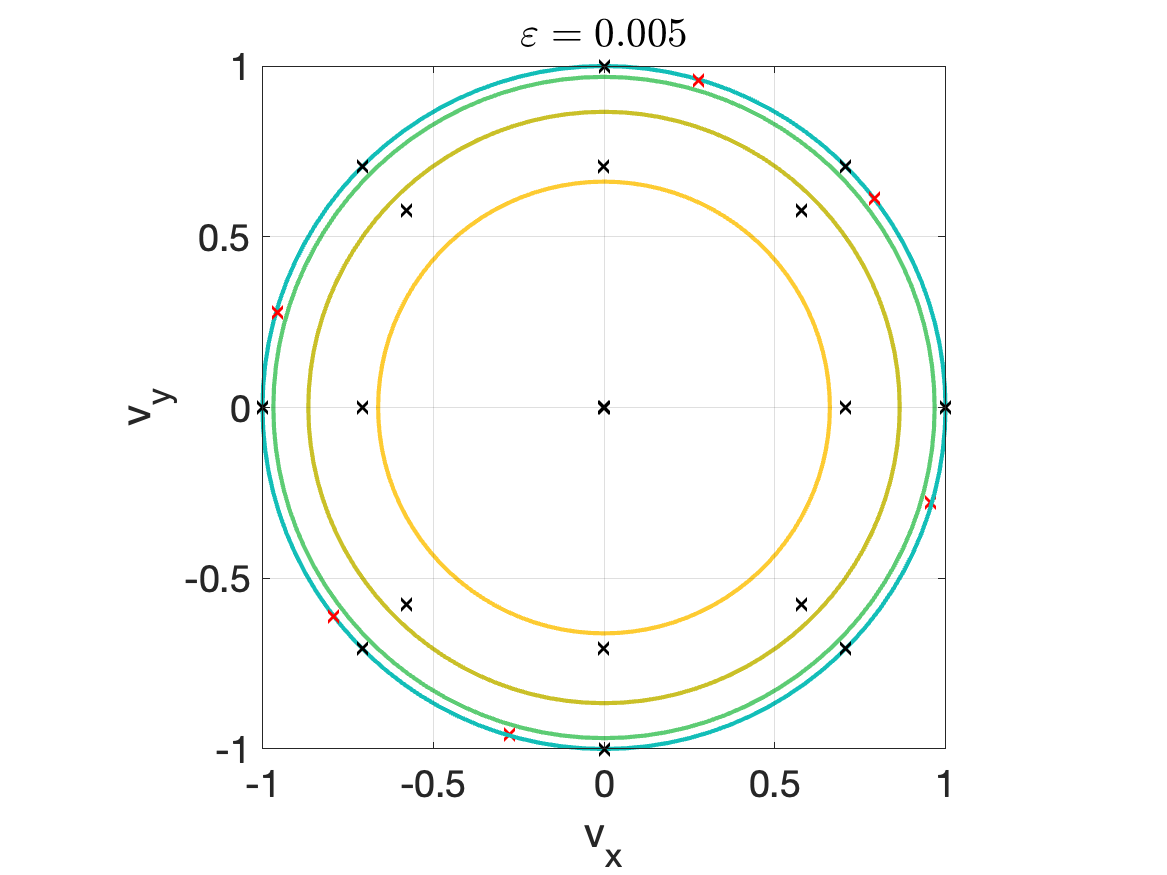}
  \caption{The reduced quadrature nodes on the unit sphere (Black for points in the initial reduced quadrature nodes, and Red for those sampled by the greedy algorithm) and these nodes with a view from the north pole. $\vareps=1.0, 0.1, 0.005$  from left to right. \label{fig:example1-quad}}
  \end{center}
\end{figure}
In Figure \ref{fig:example1-quad}, we present the sampled angular points when the stopping criteria are satisfied. The number of quadrature points in the reduced quadrature rule generated by MMD-RBM are $48$ for $\vareps=1$, $40$ for $\vareps=0.1$ and $32$ for $\vareps=0.005$. {We can see that the sample points are fairly uniform on the sphere for this homogeneous case.}

\medskip
\noindent{\bf Benefit of the equilibrium-respecting strategy:}
We demonstrate the benefit of the equilibrium respecting strategy, that is  the inclusion of $\{\Dt\Theta^{-1}D_x^-\brho^m,\Dt\Theta^{-1}D_y^-\brho^m,t^m\in\mathcal{T}_{\textrm{rb}}^\rho\}$ 
when updating the reduced order space $U_{h,r}^g$.
Without these extra functions, 
we report in Table \ref{tab:example1_mm_no_rhox} the dimensions of the reduced order subspaces and the errors 
when the stopping criteria are the same. Comparing with Table \ref{tab:example1_mm}, we see that
when $\varepsilon=0.1$ and $\varepsilon=0.005$ including derivatives of $\rho$ in $U_{h,r}^g$ leads to smaller values of $r_\rho$, {$N_v^{\textrm{rq}}$} and comparable errors. Having smaller $r_\rho$ values is particularly beneficial since the cost of solving the reduced order problem for one time step scales roughly as $O(r^3 N_v^{\textrm{rq}})$ and the size of the reduced order operator in \eqref{eq:schur:r:MMD} is $r_\rho\times r_\rho$. This advantage is particularly pronounced in the more diffusive regime with $\varepsilon=0.005$.

\begin{table}[htbp]
  \centering
 \medskip
    \begin{tabular}{|l|c|c|c|c|c|c|c|c|c|c|c|}
    \hline
 		       &$r_\rho$ & $r_g$ & $N_v^{\textrm{rq}}$ & C-R & $\mcE_\rho$&   $\mcR_\rho$& 
 		        $\mcE_{\lgl \bv f\rgl}$ & $\mcR_{\lgl \bv f\rgl}$ &
 		        $\mcE_f$  & $\mcR_f$  \\ \hline
 $\vareps=1$  &	$14$& $28$ & $52$ & 0.04\% & 1.01e-5& 0.18\% &2.05e-5&  1.33\% & 1.40e-4 & 2.01\% \\ \hline			
 $\vareps=0.1$ &	$16$	& $32$ & $50$ & 0.04\% & 2.15e-5 & 0.72\% &8.19e-6&  1.70\% & 3.96e-5 & 1.20\% \\ \hline	
 $\vareps=0.005$&	$9$	& $18$ & $38$ & 0.02\% & 2.18e-5 & 0.13\% &4.77e-7&  0.53\% & 2.18e-5&  0.13\% \\ \hline
 \end{tabular}
 \caption{Dimensions of the reduced order subspaces, $r_\rho$, $r_g$, the number of reduced quadrature nodes $N_v^{\textrm{rq}}$, the testing error and the compression ratio for the homogeneous media example  with the ROM constructing the reduced space for 
$g$ only with snapshots of $g$.}
      \label{tab:example1_mm_no_rhox}
\end{table}
\begin{figure}[htbp]
  \begin{center} 
\includegraphics[width=0.45\textwidth]{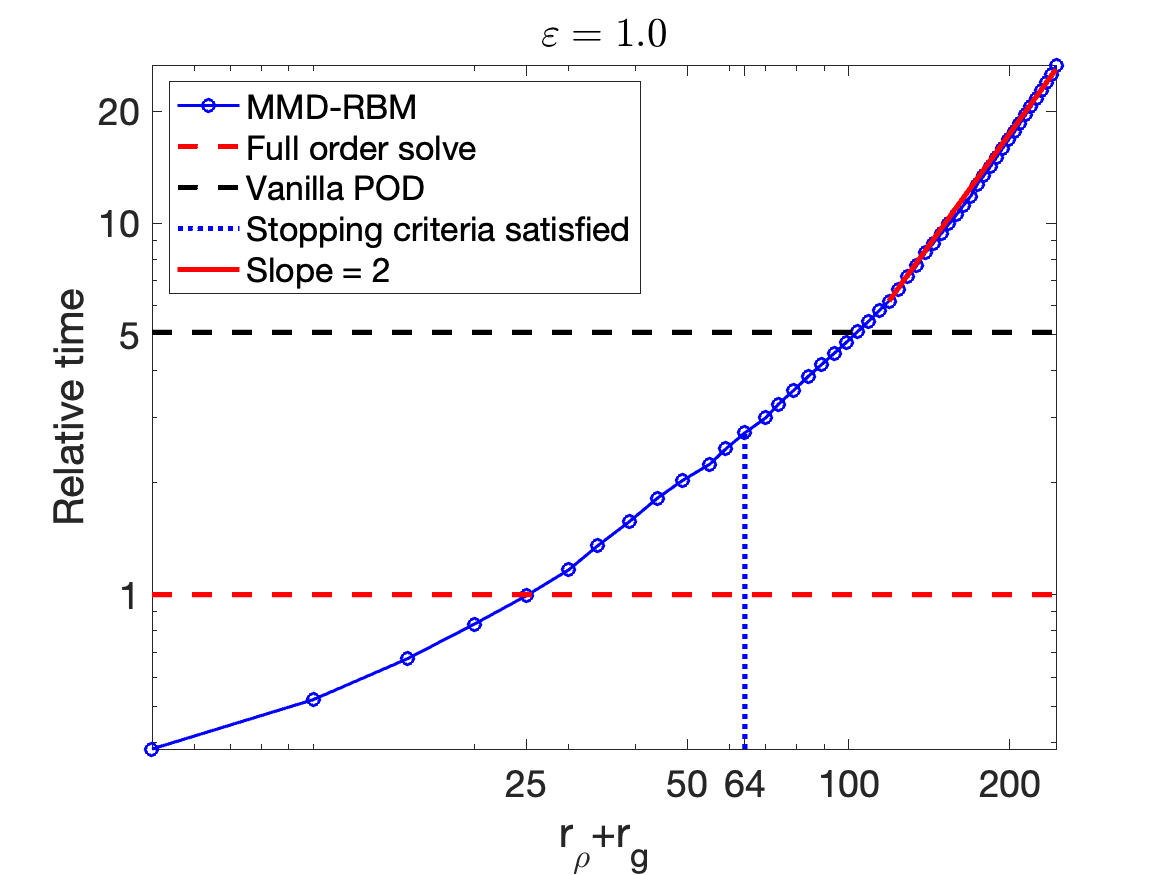}
  \includegraphics[width=0.45\textwidth]{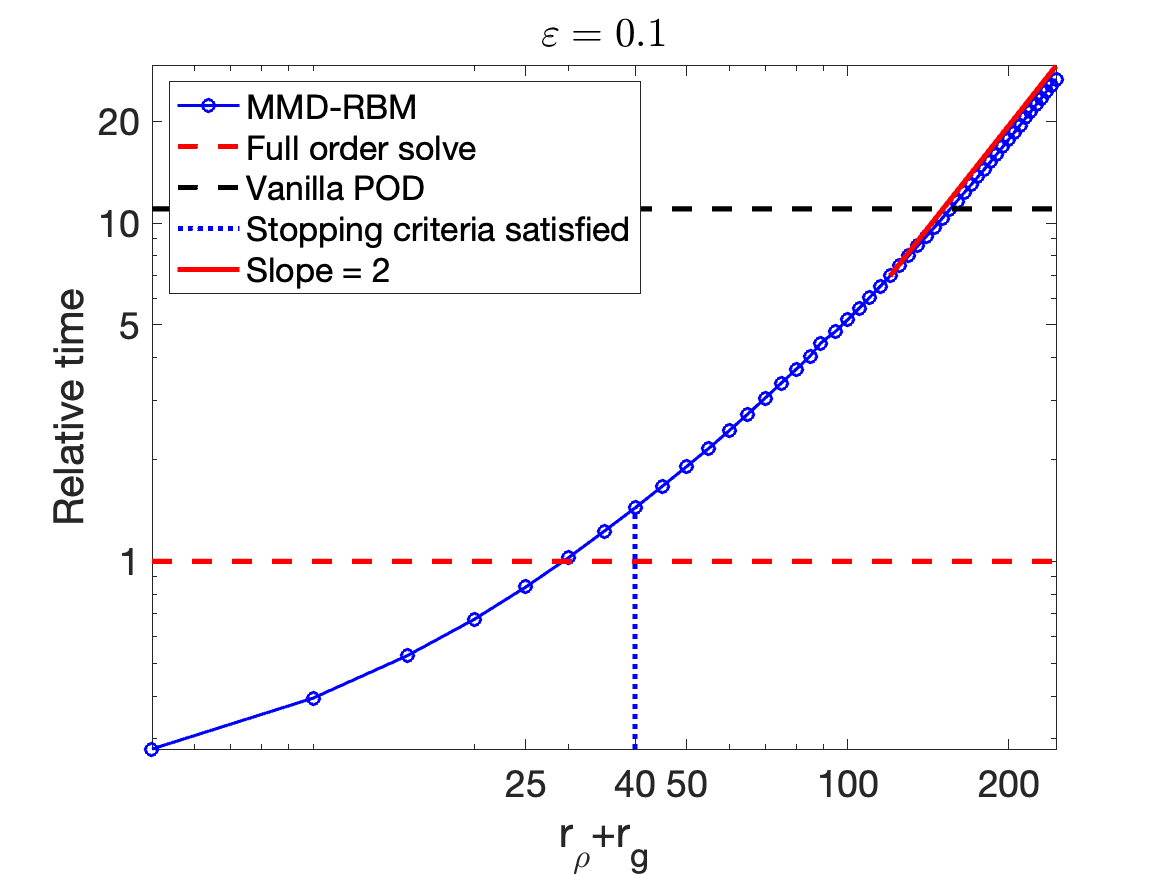}
  \includegraphics[width=0.45\textwidth]{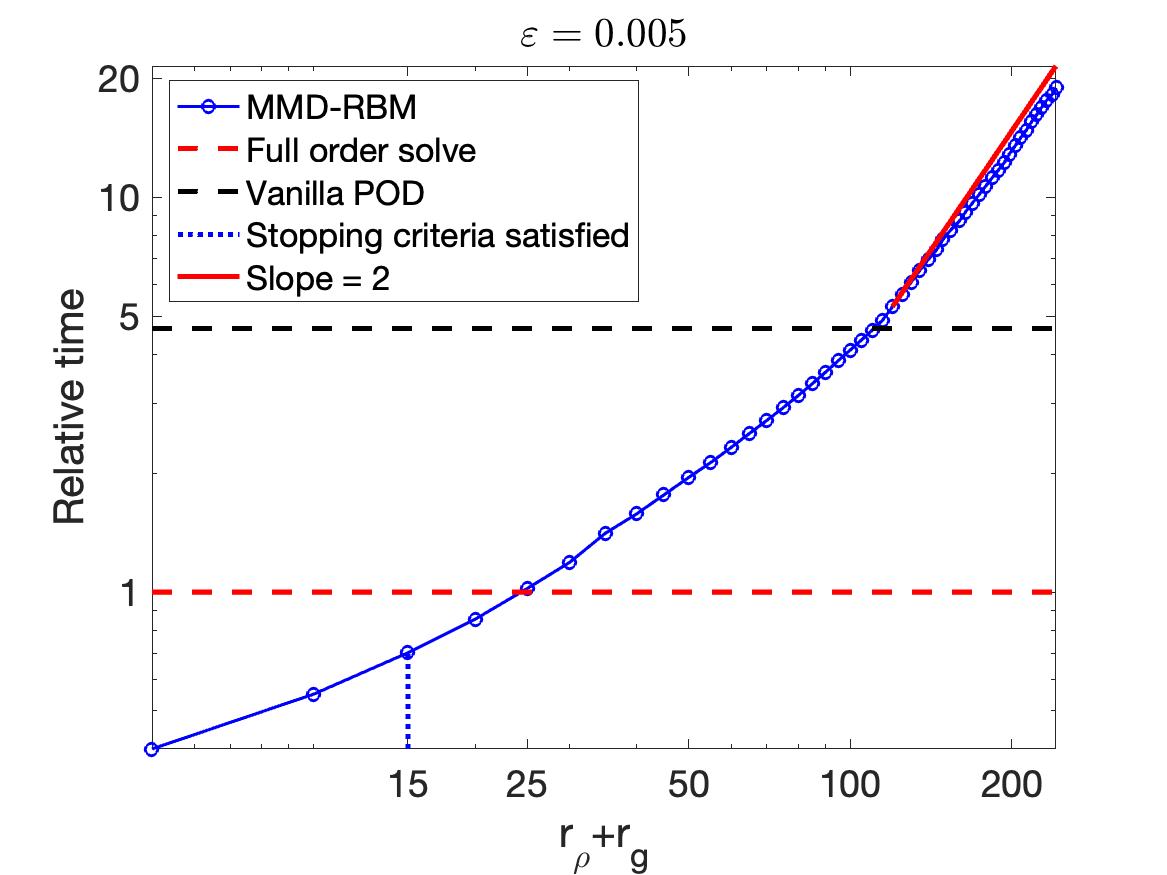}
  \includegraphics[width=0.45\textwidth]{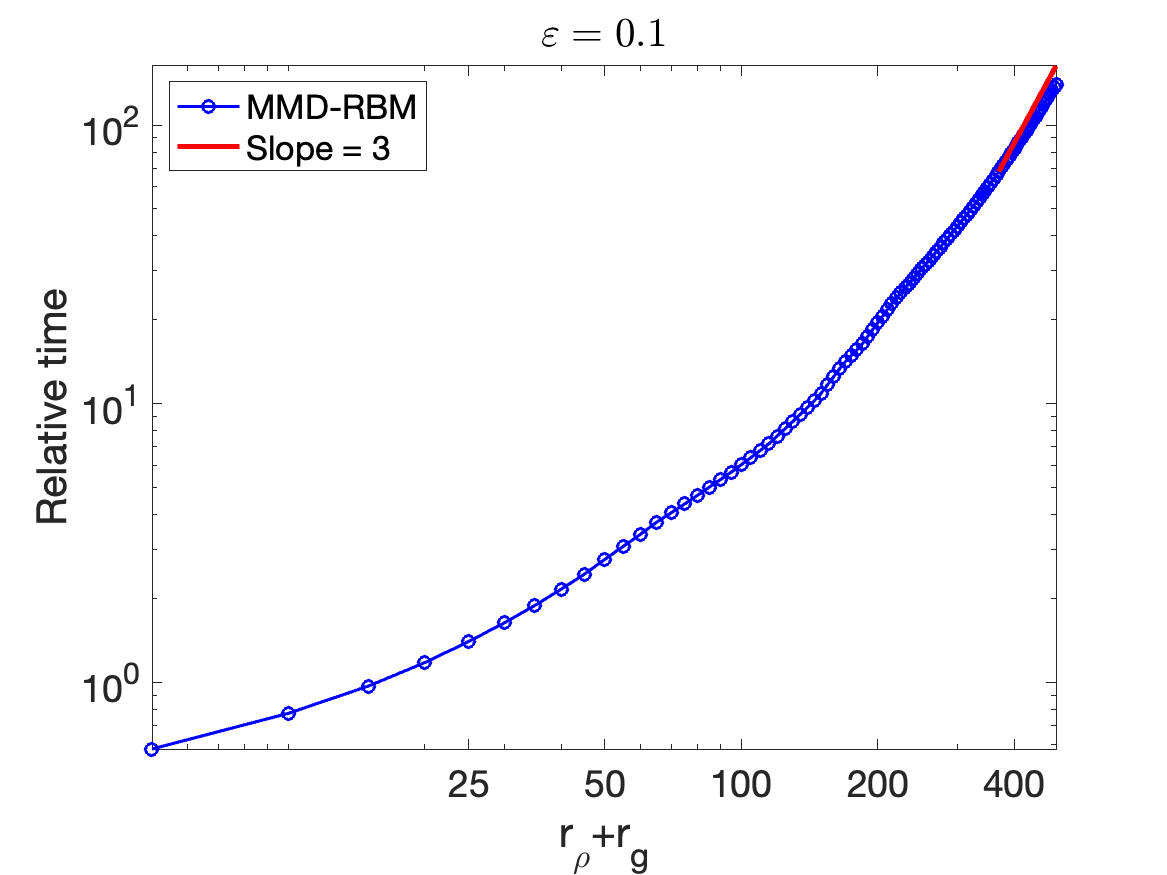}
  \caption{Relative offline computational time with respect to the reduced order $r_\rho+r_g$ for the homogeneous media example. Note the computational time is normalized by the full order solve in each case. Bottom right: 100 greedy iterations; Others: 50 greedy iterations. \label{fig:example1-offline}}
  \end{center}
\end{figure}
\medskip
\noindent{\bf The cost of the Offline stage:} In Figure \ref{fig:example1-offline}, the   offline computational time of our MMD-RBM is reported along with  the computational time of $\textrm{FOM}(\mathcal{V}_\textrm{train})$ and a vanilla POD strategy that computes the SVD of all the snapshots from   $\textrm{FOM}(\mathcal{V}_\textrm{train})$. 
All reported times are normalized by that of the full order solve in each case. Here, for comparison purpose, we implement the offline algorithm with $50$ or $100$ greedy iterations even though 
the stopping criteria are satisfied much sooner. For the first $50$ iterations, we see that the offline computational time of the MMD-RBM scales roughly as $r^2$ (with $r=r_\rho+r_g$) which is faster than the $O(r^3)$ cost suggested by \eqref{eq:offline-cost}. As shown in the bottom right picture of Figure \ref{fig:example1-offline}, the offline cost transitions from $O(r^2)$ to $O(r^3)$ as greedy procedure continues to 100 iterations, and it eventually scales slightly close to $O(r^3)$. We also label the location, via a vertical line, when the stopping criteria 
are satisfied. For all $\varepsilon$'s, the offline cost of our method is smaller than the cost of vanilla POD. Moreover, for $\vareps=0.005$, it is even smaller than the time of $\textrm{FOM}(\mathcal{V}_\textrm{train}).$ This shows the effectiveness of the greedy RB procedure in producing a low rank numerical solver. 


\subsection{Anisotropic initial condition \label{sec:sample_direction_test}}

\begin{figure}[]
  \begin{center} 
\includegraphics[width=0.32\textwidth]{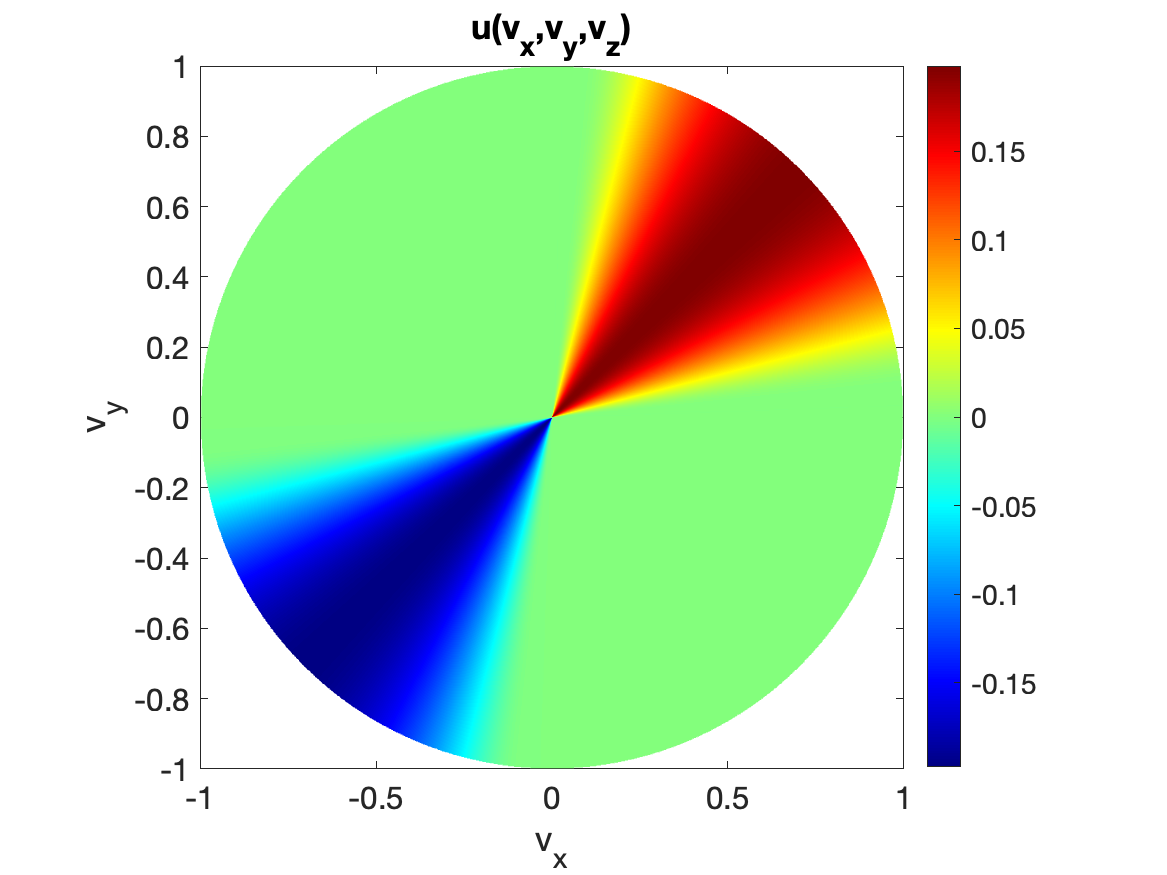}
  \includegraphics[width=0.32\textwidth]{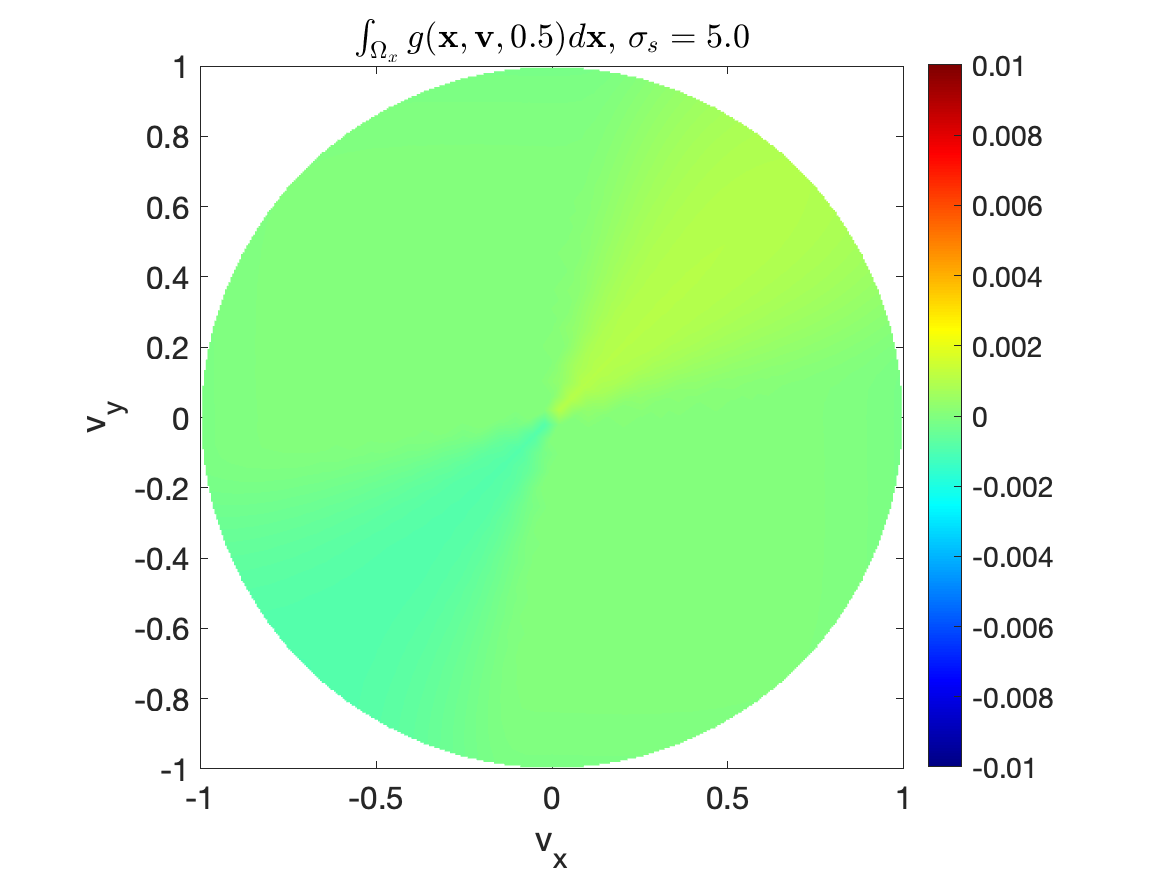}
  \includegraphics[width=0.32\textwidth]{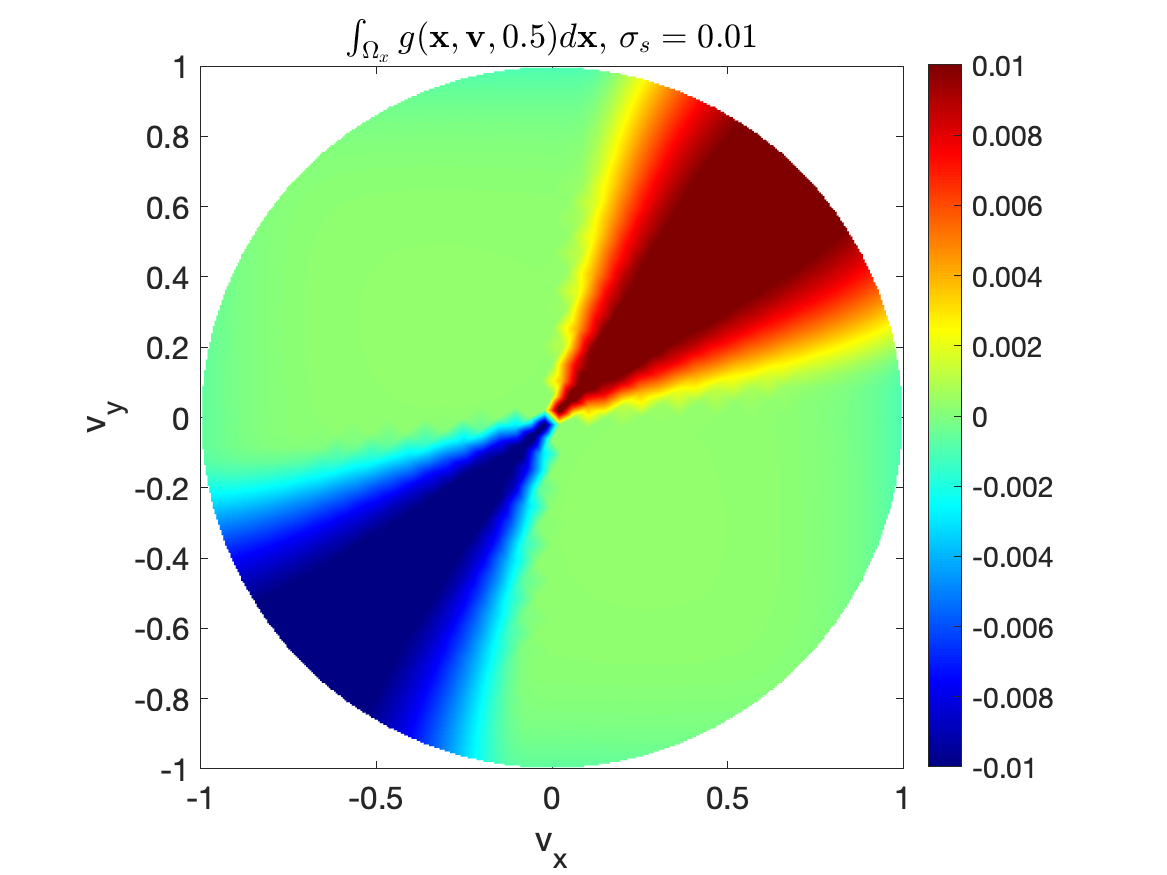}
  \includegraphics[width=0.325\textwidth,trim={0.5cm 0.1cm 0.5cm 0.1cm},clip]{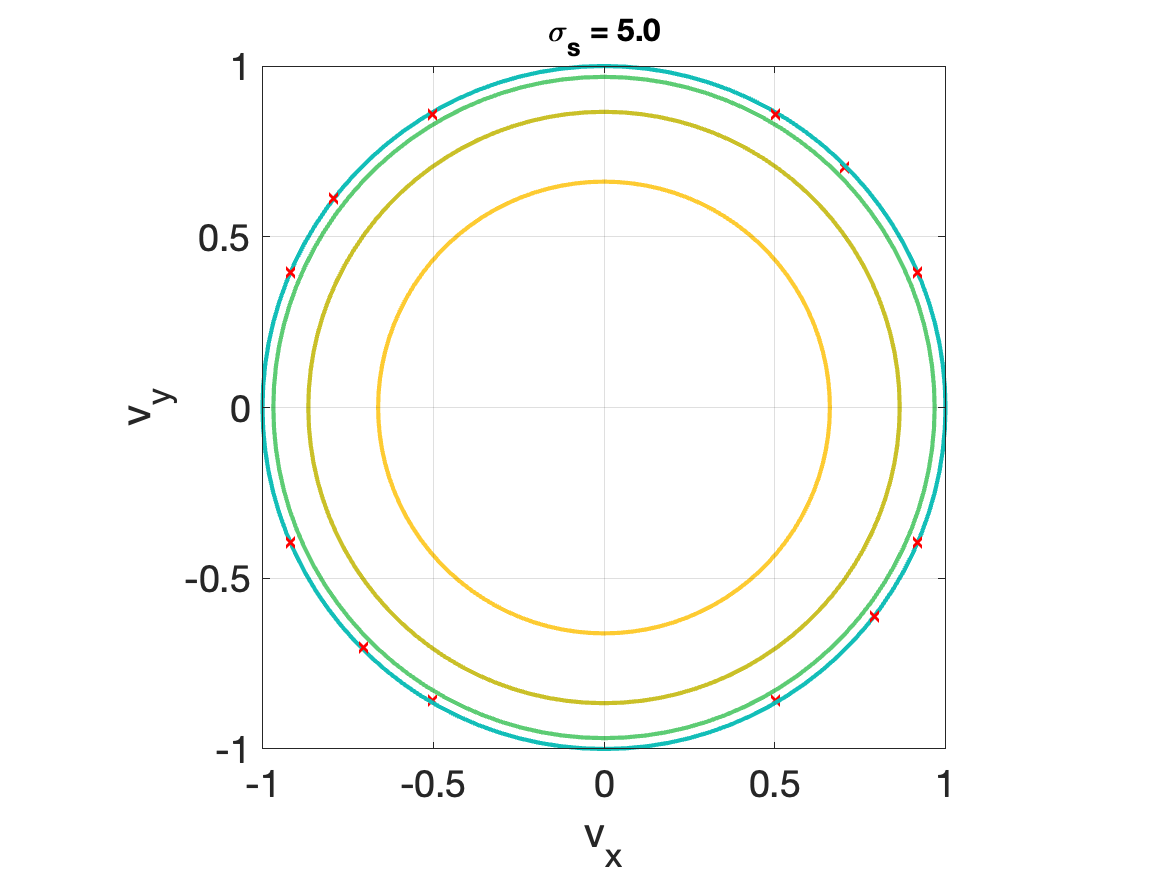}
  \includegraphics[width=0.325\textwidth,trim={0.5cm 0.1cm 0.5cm 0.1cm},clip]{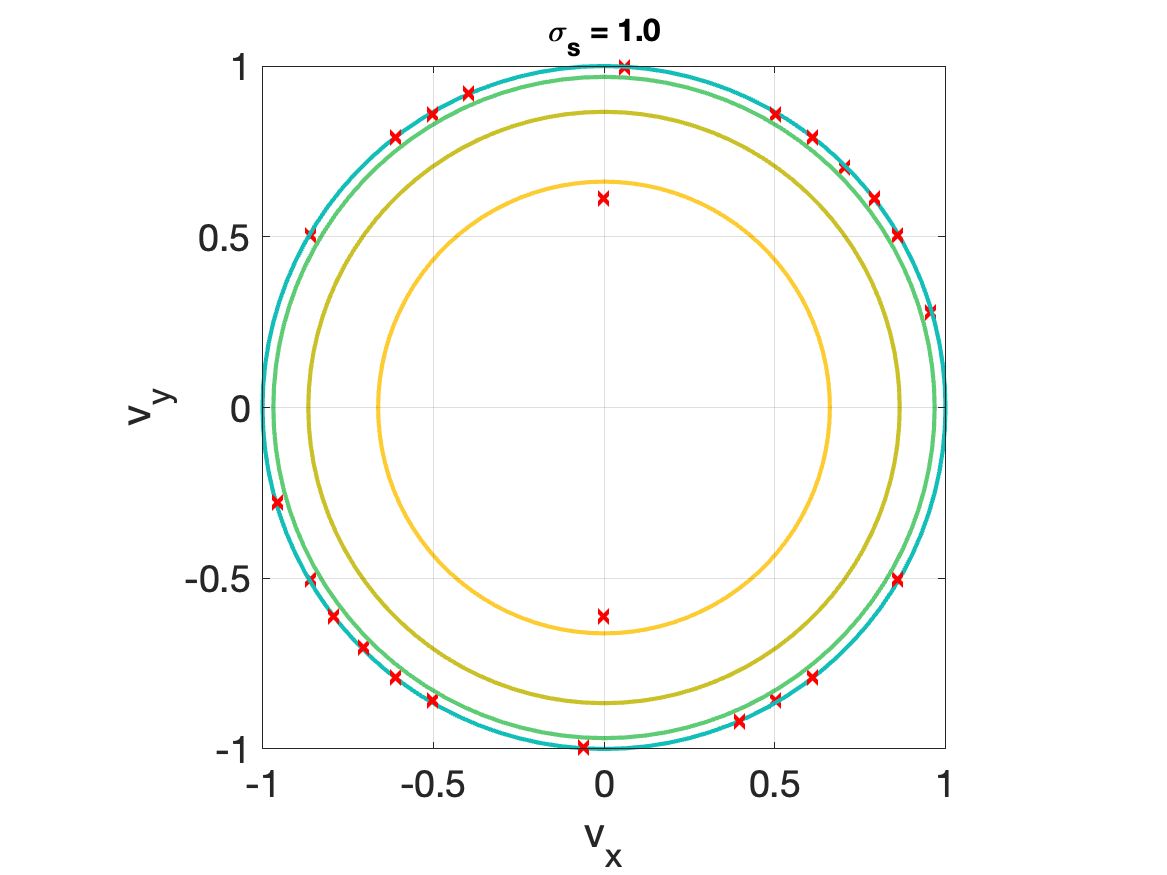}  
  \includegraphics[width=0.325\textwidth,trim={0.5cm 0.1cm 0.5cm 0.1cm},clip]{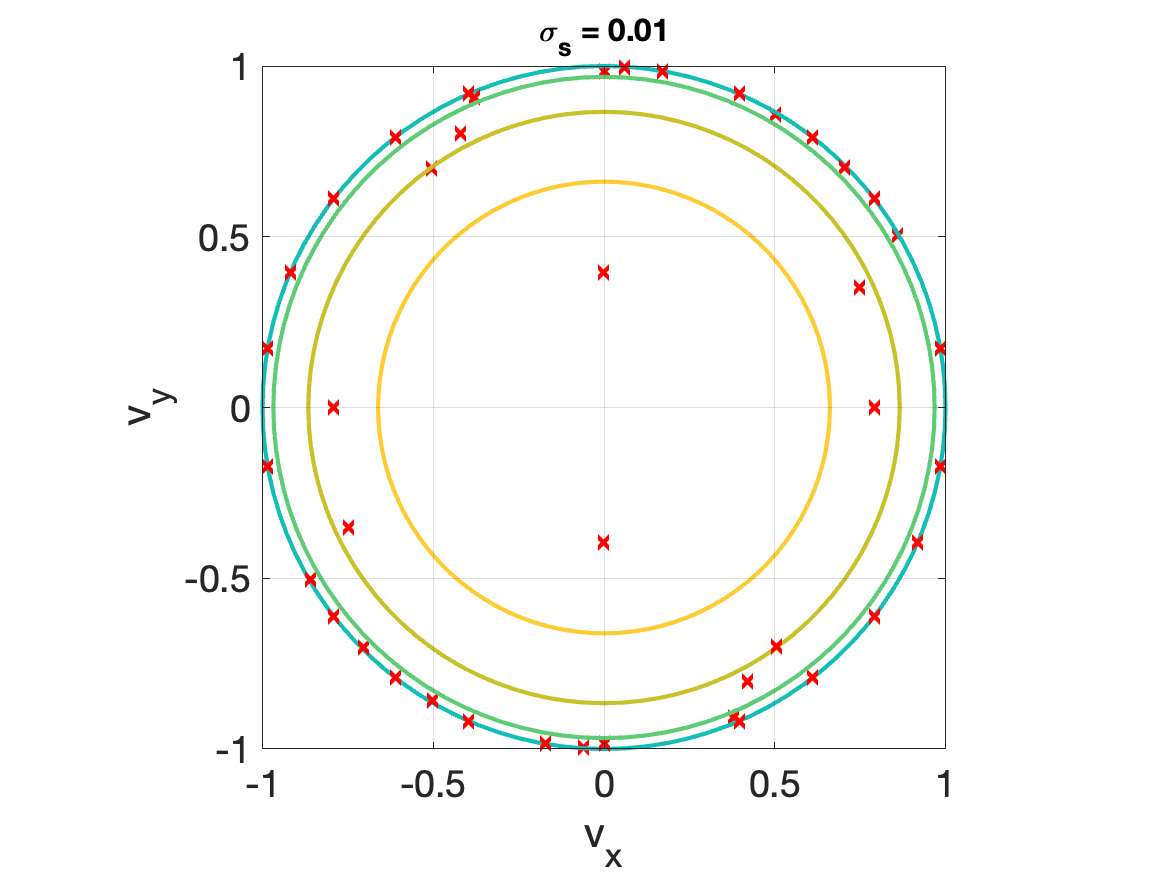}
  \caption{Shown on top are the configuration $u(\bv)$ in the initial condition of $g(\bx,\bv,0)=u(\bv)\rho(\bx,0)$ and $\int_{\Omega_x}g(\bx,\bv,0.5)d\bx$ for $\sigma_s=5.0,\;0.01$ (view from the north pole).  On the bottom are the sampled angular directions (view from the north pole) for the example in Section \ref{sec:sample_direction_test} with various $\sigma_s$ values.}
  \label{fig:example5-quad}
  \end{center}
\end{figure}
To demonstrate the ability of our method in adaptively sampling physically important angular directions, we consider the initial condition with anisotropy in the angular variable for $g$, namely, $g(x,y,\bv,0) = u(\bv(\theta,\phi))\rho(x,y,0)$ with
\[
    \rho(x,y,0) = \begin{cases}
                  \exp(-1.0/(0.5-x^2-y^2)), \quad\text{if}\; x^2+y^2<0.5,\\
                  0.0, \qquad\text{else}
                  \end{cases} \quad \mbox{and}
\]
\[
u(\bv(\theta,\phi)) = \begin{cases}
                                \;\exp\left(\frac{-1}{\frac{\pi^2}{16}-(\phi-\frac{\pi}{4})^2}
                                \right),\quad\text{if}\; v_x>0,v_y>0,\\
                                \;-\exp\left(\frac{-1}{\frac{9\pi^2}{16}-(\phi+\frac{3\pi}{4})^2}\right),\quad\text{if}\; v_x<0, v_y<0,\\
                                0.0, \qquad\text{else}.
                                 \end{cases}
\]
The computational domain is $[-1,1]^2$. The Knudsen number is $\vareps=1.0$ and the final time is $T=0.5$. As shown in the top left picture of Figure \ref{fig:example5-quad}, $u(\bv)$ in the initial condition 
$g(\bx,\bv,0)$ has more features when $v_x$ and $v_y$ are both positive or negative.  We set $\textrm{tol}_{\textrm{ratio}}=1\mathrm{e-4}$, $\textrm{tol}_{\textrm{error},\rho}=1.25\%$ and $\textrm{tol}_{\textrm{error},f}=1.25\%$. The initial reduced quadrature rule is a Lebedev quadrature with $26$ points. We consider different scattering cross sections $\sigma_s=5,\;1,\;0.01$ with zero absorption $\sigma_a=0$. Our MMD-RBM produces less than $1.44\%$ relative error when reconstructing $\rho$ online and less than $2.27\%$ relative error when predicting $f$ for unseen angular directions. In Figure \ref{fig:example5-quad}, we also present $\int_{\Omega_x}g(\bx,\bv,0.5)d\bx$ and the sampled angular directions. When $\sigma_s=5$, $\int_{\Omega_x}g(\bx,\bv,0.5)d\bx$ is almost isotropic w.r.t $\bv$ due to the strong scattering. Indeed, the sampled angular directions are more uniformly distributed. As $\sigma_s$ becomes smaller, the problem becomes more transport dominant and we observe that more angular directions are sampled
in the first and third quadrants, where $g$ has more features.  

\subsection{A multiscale problem with a spatially dependent scattering\label{sec:multiscale}}
Now, we consider a spatially-dependent scattering cross section \cite{einkemmer2021asymptotic}
\begin{align*}
    \sigma_s(x,y) = \begin{cases}
                0.999r^4(r+\sqrt{2})^2(r-\sqrt{2})^2+0.001,
                \;\text{with}\; r=\sqrt{x^2+y^2}<1,\\
                1,\;\text{otherwise},
                    \end{cases}\label{eq:multiscale_scattering}
\end{align*}
on the computational domain $[-1,1]^2$ with $\vareps=0.01$. The effective Knudsen number for this problem $\vareps/\sigma_s$ smoothly varies from $10$ to $0.01$ indicating a smooth transition from a transport dominant region in the center to a scattering dominant region in the outer part of the computational domain.  The initial value for this problem is 
$f(\bx,\bv,0)=\frac{5}{\pi}\exp(-25(x^2+y^2))$. We use a uniform mesh of $80\times 80$ uniform rectangular elements to partition the computational domain. 
The final time is $T=0.05$. The parameters in the stopping criteria are $\textrm{tol}_{\textrm{ratio}}=1\mathrm{e-4}$, $\textrm{tol}_{\textrm{error},\rho}=1.5\%$ and $\textrm{tol}_{\textrm{error},f}=2.5\%$. The greedy iteration is initialized with the $11$-th order $50$ points Lebedev quadrature rule.
The configuration of $\sigma_s(x,y)$, the FOM and the ROM solutions are presented on the top row of Figure \ref{fig:multiscale}. ROM solution matches the FOM solution well.  In the bottom left of Figure \ref{fig:multiscale}, the $94$  sampled angular points are presented. In the bottom right, we present the relative training error at the final time and the values of error estimators as a function of the number of greedy iterations. Overall, the error estimator provides a reasonable approximation to the relative training error at the final time. 
The errors are shown in Table \ref{tab:multiscale_mm}. It is clear that this example requires a higher rank representation for the reduced solution than the previous examples due to the large effective Knudsen number in the center region.  The MMD-RBM produces numerical solutions with relative error below $0.8\%$ for the scalar flux with only $0.27\%$ degrees of freedom in comparison to the full model.  
\begin{figure}[]
  \begin{center} 
  \includegraphics[width=0.32\textwidth]{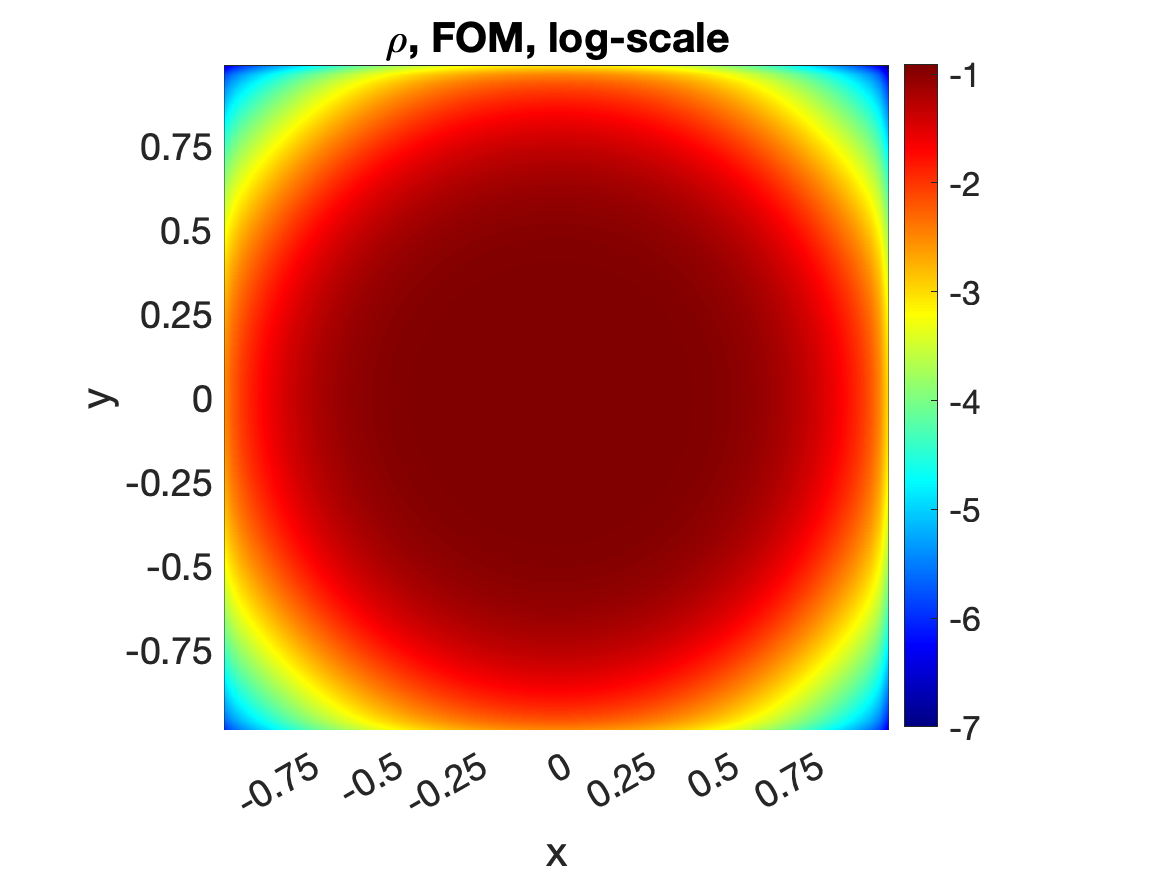}
  \includegraphics[width=0.32\textwidth]{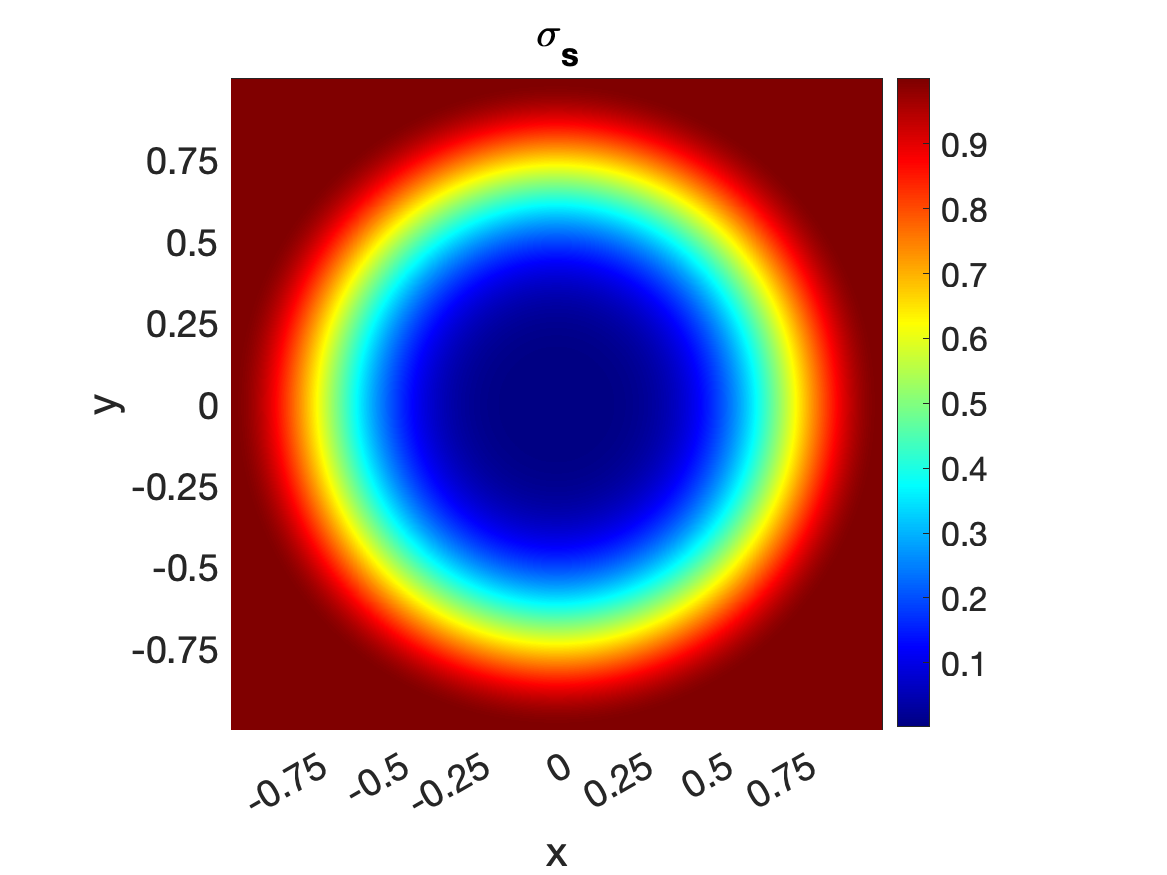}
  \includegraphics[width=0.32\textwidth]{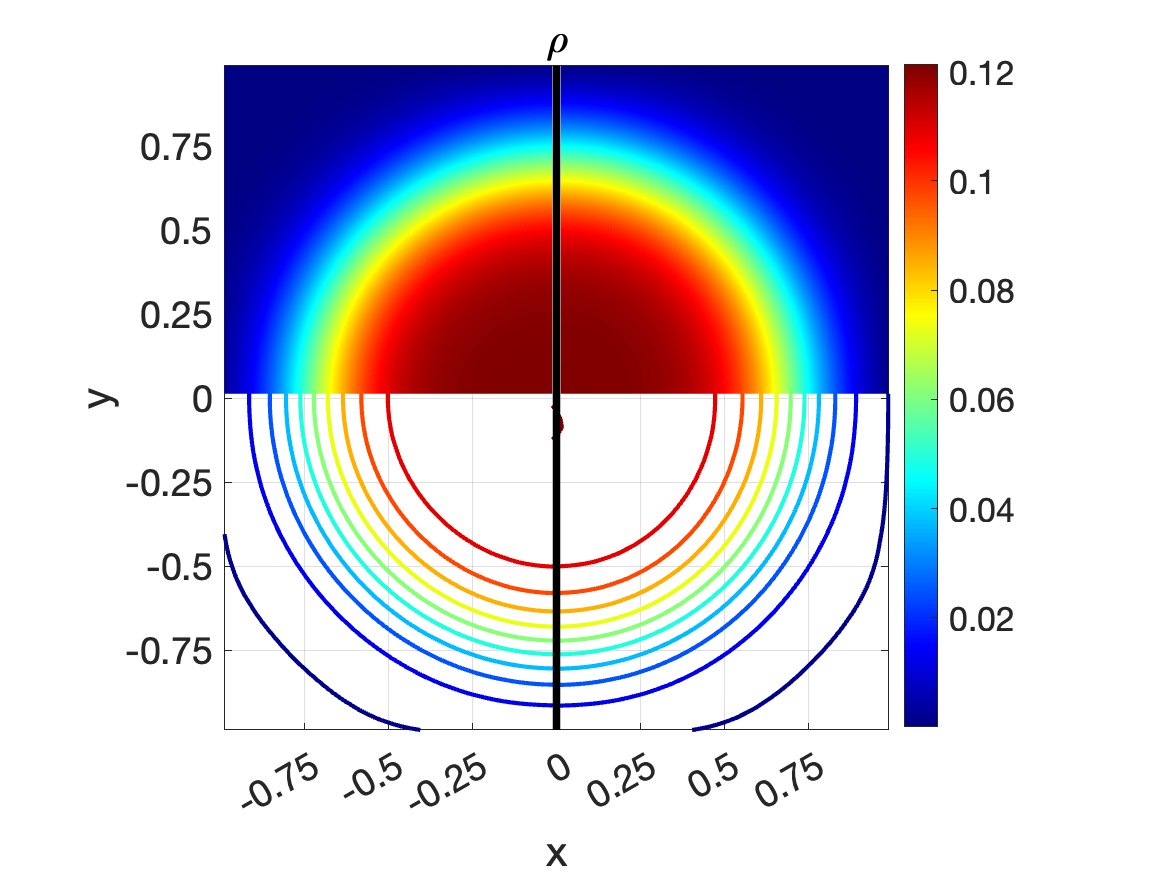}
  \includegraphics[width=0.325\textwidth,trim={2.5cm 0.1cm 2.5cm 0.1cm},clip]{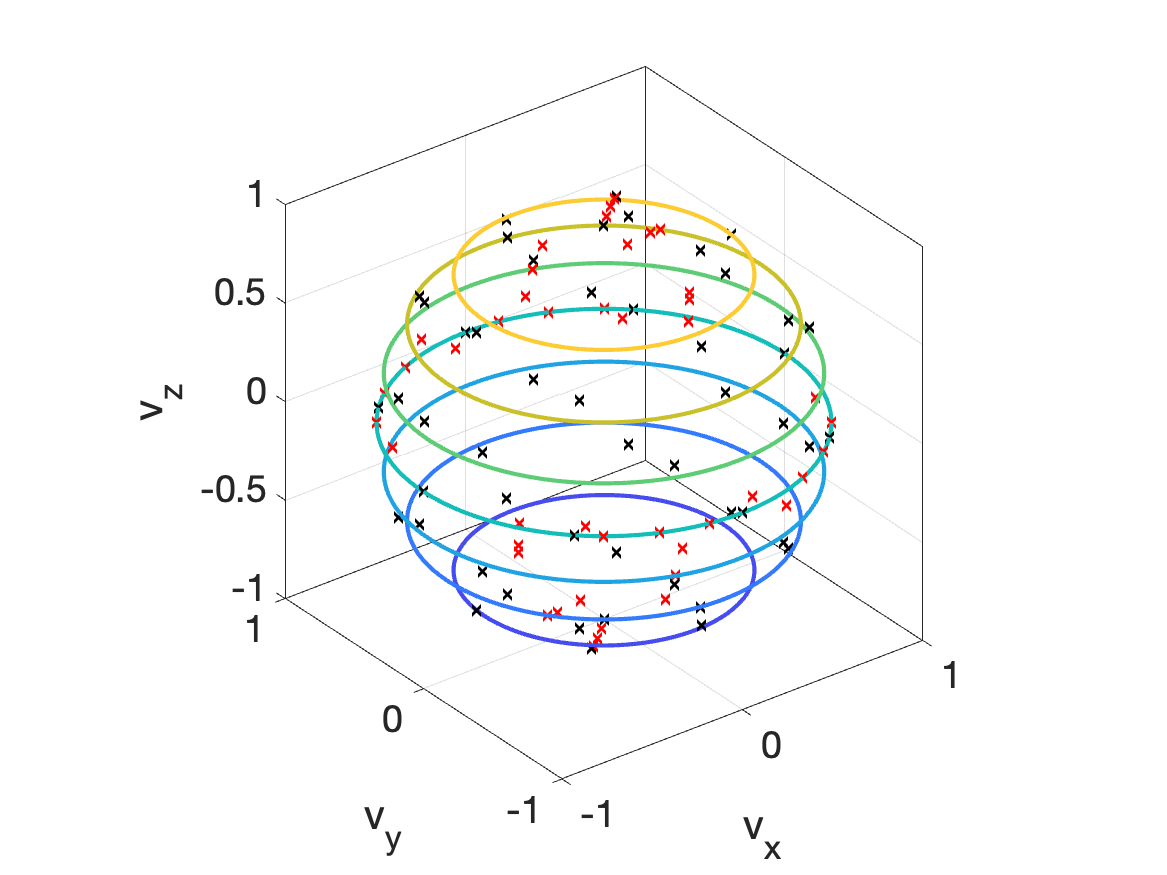}
  \includegraphics[width=0.325\textwidth,trim={2.5cm 0.1cm 2.5cm 0.1cm},clip]{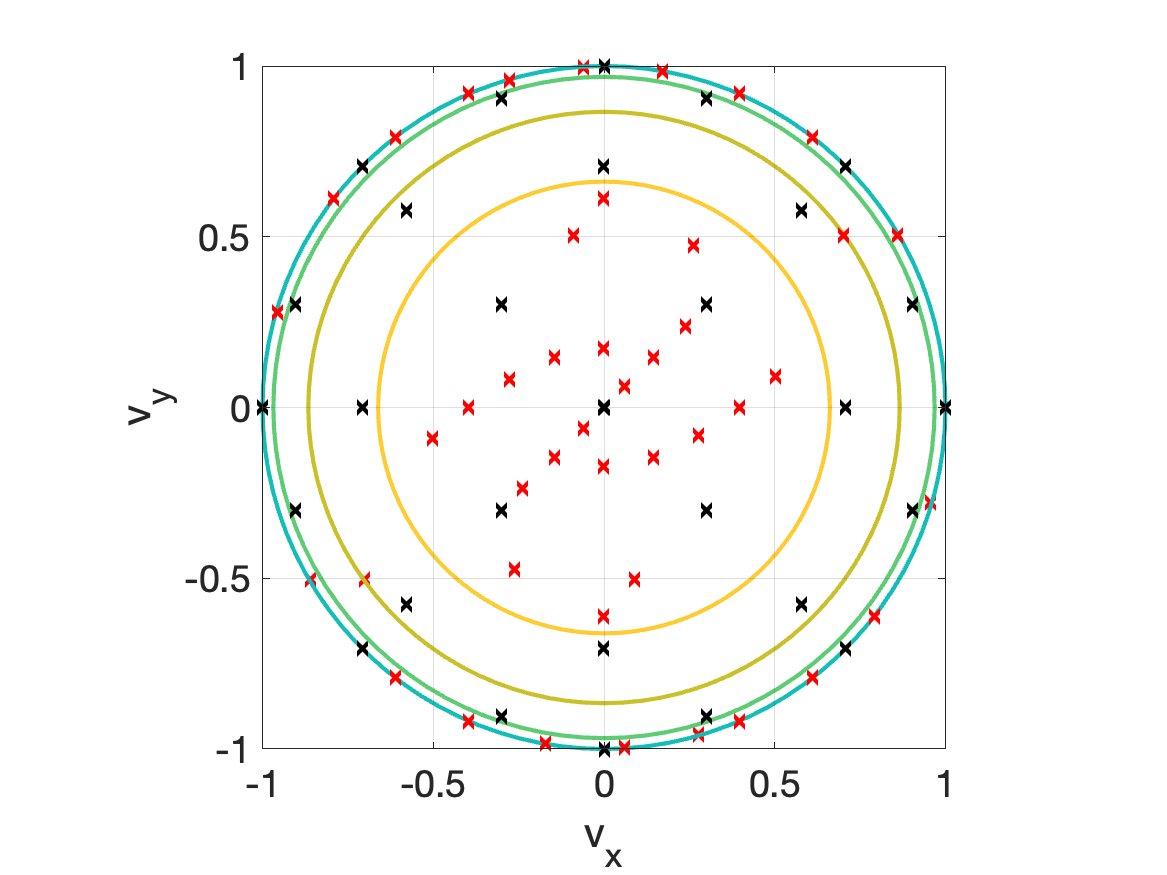}
  \includegraphics[width=0.325\textwidth,trim={2.5cm 0.1cm 1.5cm 0.5cm},clip]{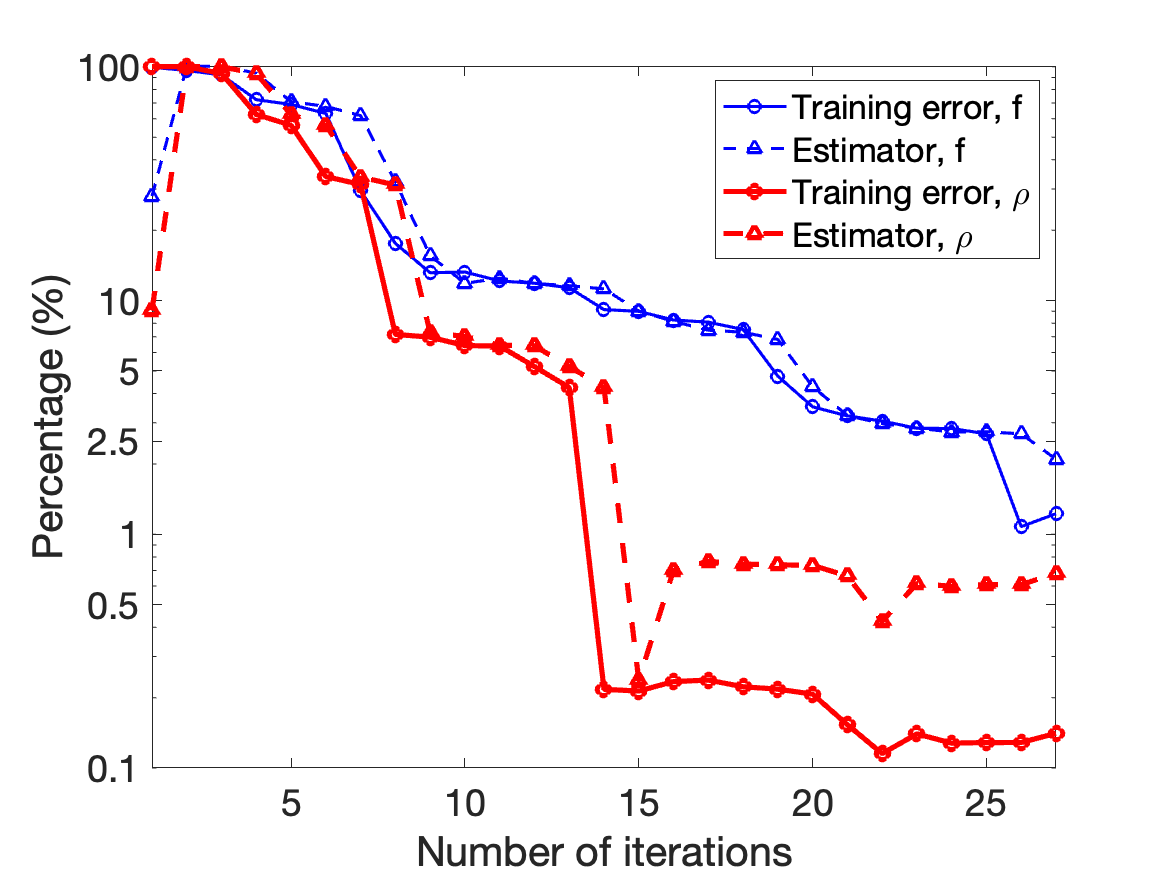}
  \caption{Results for the multiscale example. Shown on top from left to right are the FOM solution in log scale, the function 
  $\sigma_s$, and comparison between the ROM (right) and FOM (left) solutions. Shown on the bottom are reduced quadrature nodes on the unit sphere (Black for points in the initial reduced quadrature nodes, and Red for those sampled by the greedy algorithm), these nodes with a view from the north pole, and the history of the relative training error at the final time and the values of error estimators as a function of number of iterations. \label{fig:multiscale}}
  \end{center}
\end{figure}
\begin{table}[htbp]
  \centering
 \medskip
 \begin{tabular}{|l|c|c|c|c|c|c|c|c|c|c|c|}
    \hline
     $r_\rho$ & $r_g$ & $N_v^{\textrm{rq}}$ & C-R & $\mcE_\rho$&   $\mcR_\rho$& 
 		        $\mcE_{\lgl \bv f\rgl}$ & $\mcR_{\lgl \bv f\rgl}$ &
 		        $\mcE_f$  & $\mcR_f$ \\ \hline
     $27$& $108$ & $94$ & 0.27\% & 3.00e-4 & 0.75\% &8.32e-5&  1.33\% & 1.18e-3 & 1.69\% \\ \hline			
 \end{tabular}
 \caption{Dimensions of the reduced order subspaces, $r_\rho$, $r_g$, the number of reduced quadrature nodes $N_v^{\textrm{rq}}$, the testing error and the compression ratio for the multiscale example  with the MMD-RBM.}
\label{tab:multiscale_mm}
\end{table}

\subsection{A lattice problem\label{sec:lattice}}
The last example is a two-material lattice problem with $\vareps=1$. The geometry set-up is shown in the middle of the top row of Figure \ref{fig:lattice}. The black region is pure absorption with $\sigma_s=0$ and $\sigma_a=100$, while the rest is pure scattering 
with $\sigma_s=1$ and $\sigma_a=0$. In the orange region, a constant source is imposed:
$$
G(x,y) = \begin{cases}
         1.0,\quad\text{if}\quad |x-2.5|<0.5\;\text{and}\quad|y-2.5|<0.5,\\
         0, \quad\text{otherwise}.
         \end{cases}
$$
A uniform mesh of $100\times100$ rectangular elements is used to partition the computational domain. The final time is $T=1.7$. The tolerances in the stopping criteria are  $\textrm{tol}_\textrm{ratio}=$1e-3, $\textrm{tol}_{\textrm{error},\rho}=1.5\%$ and $\textrm{tol}_{\textrm{error},f}=3.0\%$. When initializing the RBM offline, we use the $11$-th order $50$ point  Lebedev quadrature rule. 

\begin{table}[htbp]
  \centering
 \medskip
 \begin{tabular}{|l|c|c|c|c|c|c|c|c|c|c|c|c|c|}
    \hline
     $r_\rho$ & $r_g$ & $N_v^{\textrm{rq}}$ & C-R & $\mcE_\rho$&   $\mcR_\rho$& 
 		        $\mcE_{\lgl \bv f\rgl}$ & $\mcR_{\lgl \bv f\rgl}$ &
 		        $\mcE_f$  & $\mcR_f$ \\ \hline
     $31$& $124$ & $102$ & 0.21\% & 1.85e-3 & 0.27\% & 4.45e-3&  2.41\% & 2.38e-2 & 2.71\% \\ \hline		
 \end{tabular}
 \caption{Dimensions of the reduced order subspaces, $r_\rho$, $r_g$, the number of reduced quadrature nodes $N_v^{\textrm{rq}}$, the testing error and the compression ratio for the lattice example  with the MMD-RBM.}
 \label{tab:example3_mm}
\end{table}
We present the ROM and FOM solutions on the top row of Figure \ref{fig:lattice}. Shown on the bottom are the $102$ nodes of the reduced quadrature rule and the history of the relative training error at the final time and the values of error estimators.  Our error estimators approximate
the relative errors at the final time well and the MMD-RBM solution matches the FOM well.
The errors are displayed in Table \ref{tab:example3_mm}. We see that the ROM achieves $0.27\%$ relative error for $\rho$ with $0.21\%$ DOFs w.r.t FOM($\mcV_{\textrm{train}}$), while the relative errors  $\lgl \bv f\rgl$ and $f$ on the test set stay
 about $2\%$ to $3\%$. 
\begin{figure}[]
  \begin{center} 
  \includegraphics[width=0.325\textwidth]{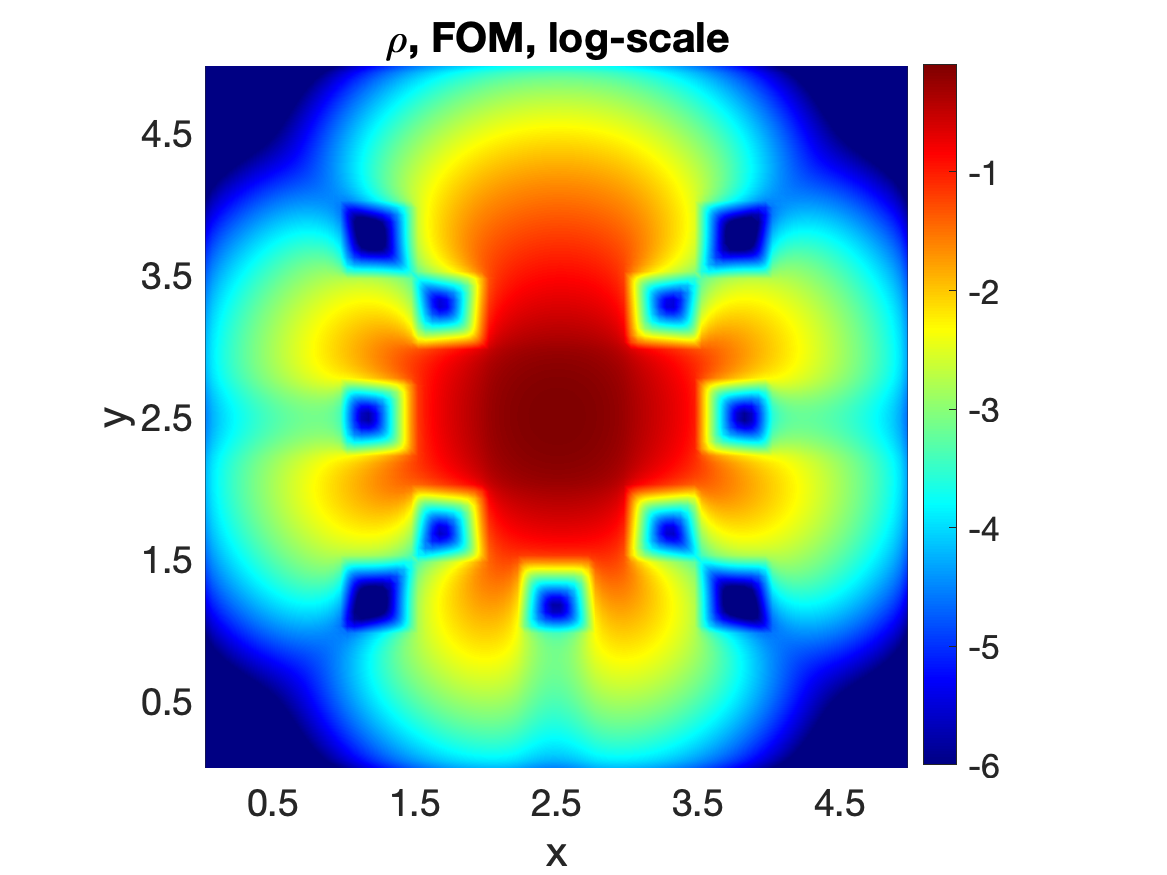}
  \includegraphics[width=0.325\textwidth]{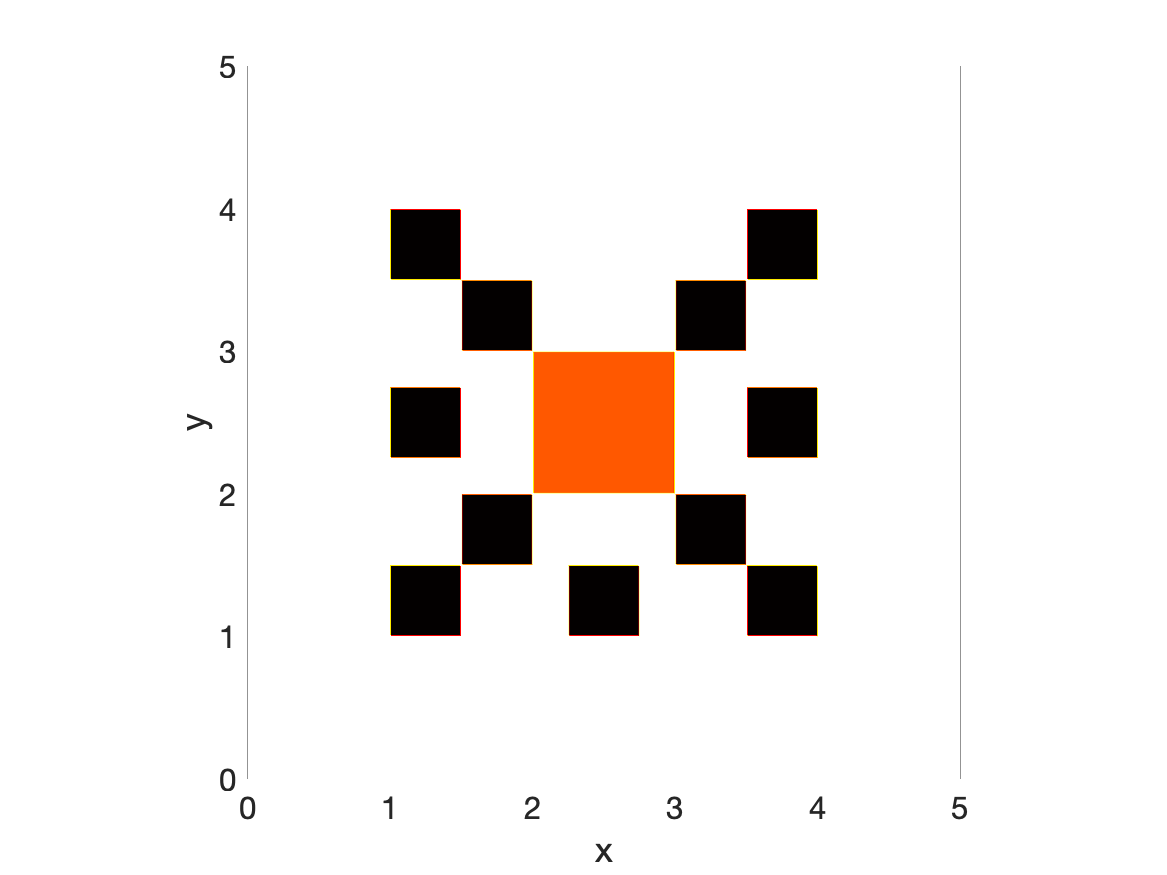}
  \includegraphics[width=0.325\textwidth]{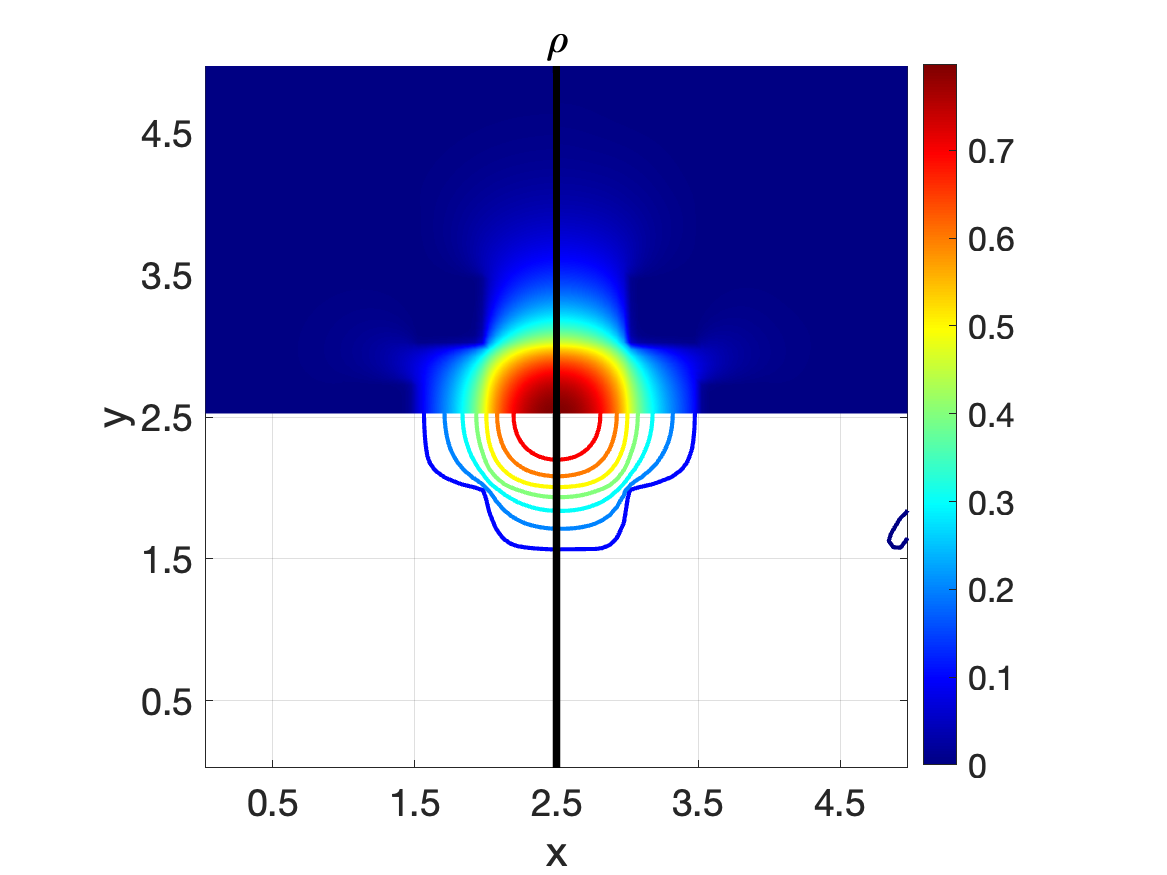}
  \includegraphics[width=0.325\textwidth,trim={2.5cm 0.1cm 2.5cm 0.1cm},clip]{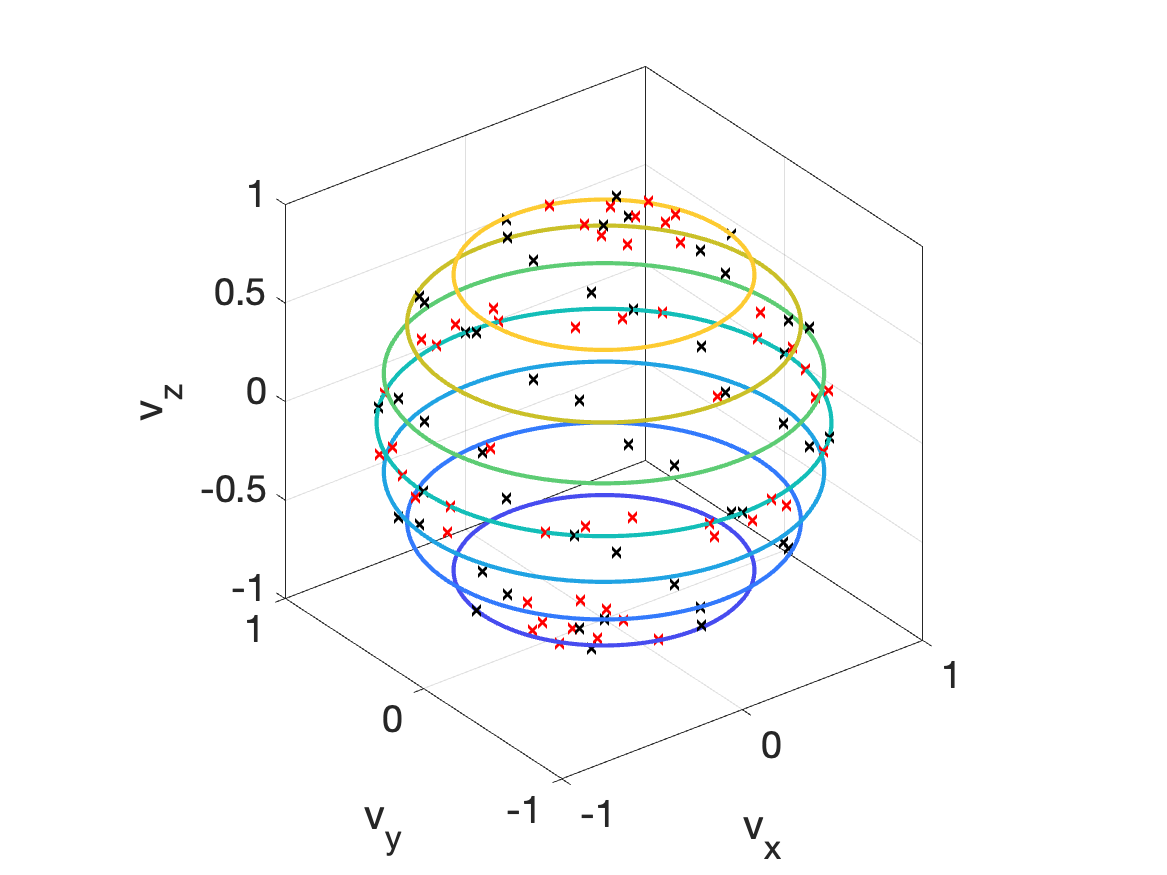}
  \includegraphics[width=0.325\textwidth,trim={2.5cm 0.1cm 2.5cm 0.1cm},clip]{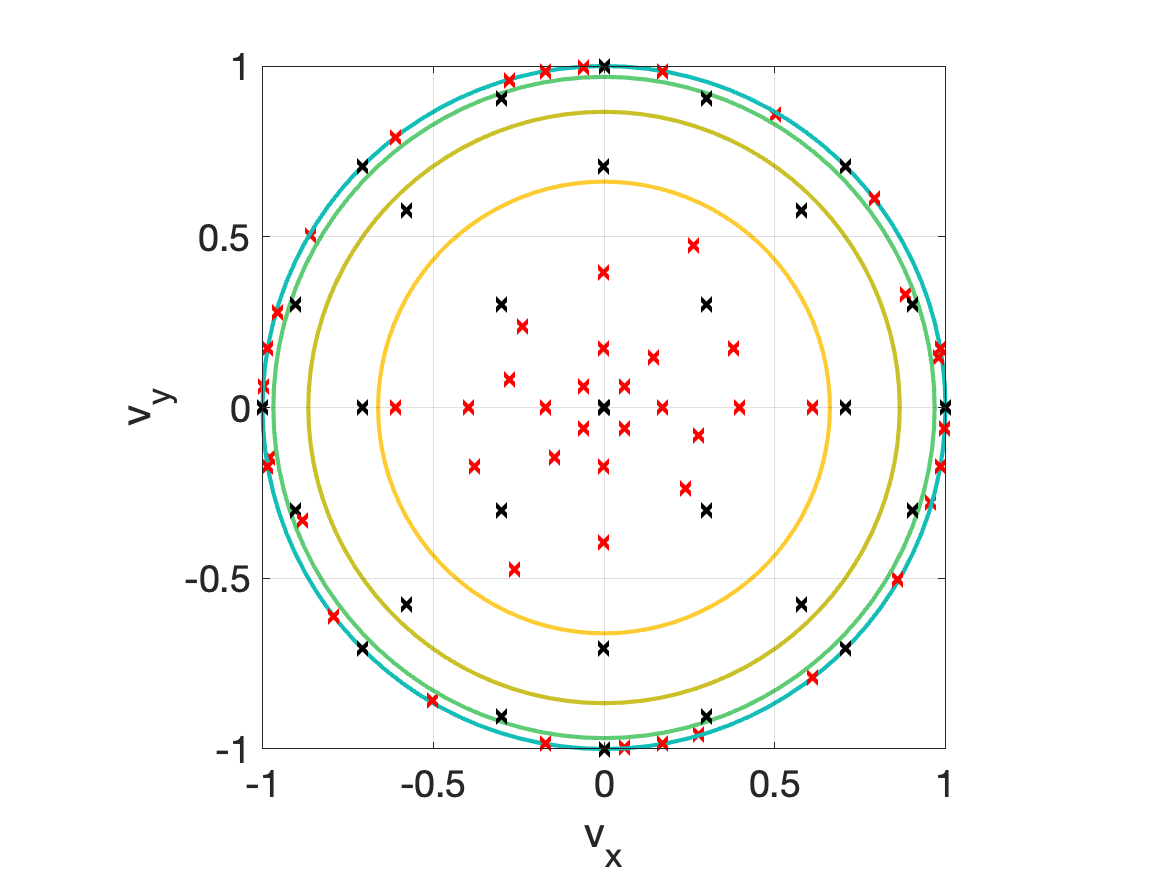}
  \includegraphics[width=0.325\textwidth,trim={2.5cm 0.1cm 1.5cm 0.5cm},clip]{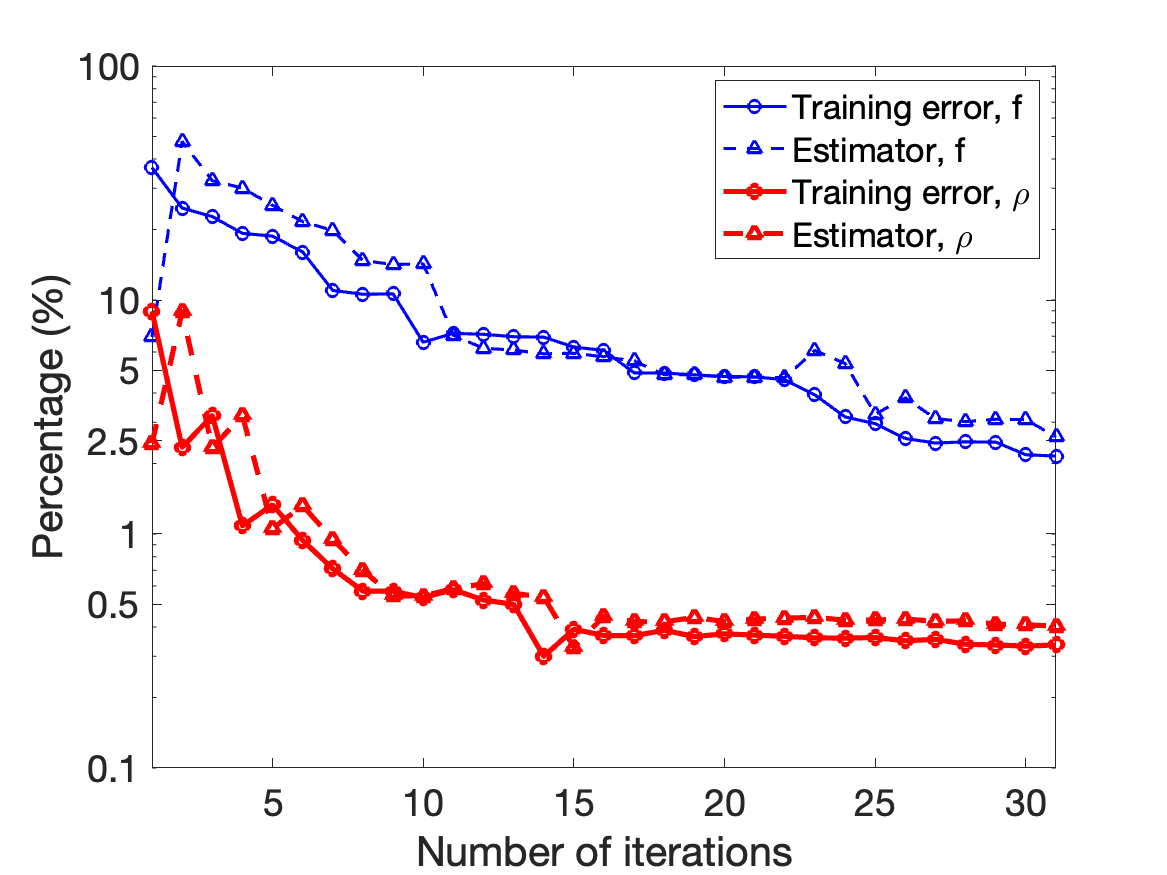}
  \caption{Results for the lattice problem. Shown on top from left to right are FOM solution in the log scale, the domain setup (Black for pure absorption, White for pure scattering, Orange for a constant source and $\sigma_s=1$, $\sigma_a=0$), and comparison between the FOM (left) and ROM (right) solution.  Shown on the bottom are reduced quadrature nodes on the unit sphere (Black for points in the initial reduced quadrature nodes, and Red for those sampled by the greedy algorithm), these nodes with a view from the north pole, and the history of the relative training error at the final time and the values of error estimators as a function of number of iterations.
   \label{fig:lattice}}
  \end{center}
\end{figure}

\section{Conclusion \label{sec:conclusions}}
In this paper, utilizing low rank structures with respect to the angular direction $\bv$ and the temporal variable $t$, we developed a novel RBM to construct ROM for the time-dependent RTE based on the micro-macro decomposition. The proposed MMD-RBM is featured by an equilibrium-respecting strategy to construct reduced order subspaces and a reduced quadrature rule with non-negative weights preserving the stability of the underlying numerical solver. As demonstrated by our numerical tests, the Offline stage of the proposed method is more efficient than the vanilla POD method and sometimes even the standard full order solve, and the Online stage is able to efficiently predict angular fluxes for unseen angular directions and reconstruct the moments of the angular flux. The natural next step along this work is to use the proposed method as a building block to design  ROMs for multi-query scenarios (e.g. inverse problems and uncertainty quantification) with essential physical parameters. 

\bibliographystyle{plain}
\bibliography{ref}

\end{document}